\documentclass[review]{elsarticle}
\usepackage{graphicx}
\usepackage{lipsum}
\usepackage{algorithmic}
  \usepackage{geometry}
\usepackage{epstopdf}
 \usepackage{mathrsfs}
  \usepackage{booktabs,longtable}
   \usepackage{indentfirst}
\usepackage{epsfig}
\usepackage{lineno,hyperref}
\usepackage{bookmark}
\usepackage{multirow}
 \usepackage{amsfonts}
\usepackage{multirow}
 \usepackage{algorithm}
 \usepackage[figuresright]{rotating}
 \usepackage{amscd}
\usepackage{mathrsfs}
\usepackage{subfigure}
\usepackage{amstext}
\usepackage{amssymb}
\usepackage{fancyhdr}
\usepackage{amsmath}
\usepackage{color}
\usepackage{cleveref}
\modulolinenumbers[5]
\biboptions{sort&compress}
\newtheorem{theorem}{Theorem}
\newtheorem{lemma}{Lemma}
\newdefinition{remark}{Remark}
\newproof{proof}{Proof}
\newtheorem{definition}{Definition}

\journal{Journal of \LaTeX\ Templates}








\bibliographystyle{elsarticle-num}

\begin{document}

\begin{frontmatter}

\title{Greedy randomized sampling nonlinear Kaczmarz methods \tnoteref{mytitlenote}}
\tnotetext[mytitlenote]{The work is supported by the National Natural Science Foundation of China (No. 11671060) and the Natural Science Foundation of Chongqing, China (No. cstc2019jcyj-msxmX0267)\\ *Corresponding author\\ Email address: lihy.hy@gmail.com or hyli@cqu.edu.cn (Hanyu Li)}

\author{Yanjun Zhang}
\author{Hanyu Li*}

\author {Ling Tang}

\address{College of Mathematics and Statistics, Chongqing University, Chongqing 401331, P.R. China}
\begin{abstract}
The nonlinear Kaczmarz method was recently proposed to solve the system of nonlinear equations. In this paper, we first discuss two greedy selection rules, i.e.,  the maximum residual and maximum distance rules, for the nonlinear Kaczmarz iteration. Then, based on them, two kinds of greedy randomized sampling methods are presented. 
Further, we also devise four corresponding greedy randomized block methods, i.e., the multiple
samples-based methods. 
The linear convergence in expectation of all the proposed methods is proved. Numerical results show that, in some applications including brown almost linear function and generalized linear model, the greedy selection rules give faster convergence rates than the random ones, and the block methods outperform the single sample-based ones.
\end{abstract}

\begin{keyword}
Nonlinear Kaczmarz; greedy sampling; maximum residual rule; maximum distance rule; nonlinear problems
\end{keyword}

\end{frontmatter}

\linenumbers

\section{Introduction}
\label{sec;introduction}
We consider finding roots of system of nonlinear equations
\begin{align}
f(x)=0,  \label{Sec11}
\end{align}
where $x\in\mathbb{R}^{n}$ is an unknown variable and $f:\mathbb{R}^{n}\rightarrow\mathbb{R}^{m}$ is a continuously differentiable vector-valued function, which can be written as $f(x)=[f_{1}(x), \cdots, f_{m}(x)]^T\in\mathbb{R}^{m}$ with $f_i:\mathbb{R}^{n}\rightarrow\mathbb{R}~(i=1, \cdots, m)$ being a real-valued function. Throughout this paper, we assume that there exists a solution $x_\star$ satisfying the nonlinear equations (\ref{Sec11}), i.e., $f(x_\star)=0$.

Solving system of nonlinear equations is an ubiquitous problem that 
plays crucial role in a wide range of applied fields such as machine learning \cite{bjorck1996numerical, chen2019homotopy}, differential equations \cite{dennis1996numerical, hao2014bootstrapping}, integral equations \cite{atkinson1992survey, hao2018equation} and optimization problems \cite{hao2018homotopy, kelley1999iterative}. Meanwhile, solving most convex optimization problems and non-convex optimization problems can also be converted into finding a good stationary point $x$ such that its gradient equals the zero vector. On the other hand, with the advent of the big data era, large-scale nonlinear problems are widely present in various fields. Therefore, 
it is very important to establish efficient methods for system of nonlinear equations both in practice and theory.

A classical algorithm for solving nonlinear problems (\ref{Sec11}) is the Newton-Raphson method \cite{ortega2000iterative} given by
\begin{align}
x_{k+1}=x_{k}-(f^{\prime}(x_k))^{\dagger}f(x_k),  \notag
\end{align}
where $f^{\prime}(x)=[\nabla f_1(x), \cdots, \nabla f_m(x)]^T\in\mathbb{R}^{m\times n}$ is the Jacobian matrix of $f$ at $x$ and $\nabla f_i(x)^T$ is its $i$th row, and $(f^{\prime}(x))^{\dagger}$ is the Moore-Penrose pseudoinverse of $f^{\prime}(x)$. Noting that it involves calculating and storing the entire Jacobian matrix, as well as the computation of its inverse or Moore-Penrose pseudoinverse, the Newton-Raphson method is hard to be applied in large dimensional setting. Recently, the nonlinear Kaczmarz method \cite{yuan2022sketched, zeng2020successive, wang2022nonlinear}, which is also called Polyak step size method in \cite{polyak1987introduction, prazeres2021stochastic, loizou2021stochastic}, is proposed to solve nonlinear problems (\ref{Sec11}). Actually, the 
method can also be seen as a subsampled Newton Raphson method \cite{yuan2022sketched}. It consists of sequential orthogonal projections towards the linearization of one constraint set associated with a single nonlinear equation. More specifically, at each iteration, it first samples a single index $i$ from $\{1, \cdots, m\}$ and linearize $f_i(x)$ around a given $x_k\in\mathbb{R}^n$, and set the linearization of $f_i(x)$ to zero, that is,
\begin{align}
f_i(x_k)+\nabla f_i(x_k)^T(x-x_k)=0. \notag
\end{align}
Then, the next iteration $x_{k+1}$ can be obtained in terms of orthogonal projection of the vector $x_{k}$ onto the above constraint, that is,
\begin{align}
x_{k+1}=\mathop{\text{argmin}}\limits_{x\in \mathbb{R}^{n}}\|x-x_k\|_2^2,\quad s.t. \quad f_i(x_k)+\nabla f_i(x_k)^T(x-x_k)=0.\label{Sec1.3}
\end{align}
Finally, we can get the update formula of the nonlinear Kaczmarz method from (\ref{Sec1.3}) as follows:
\begin{align}
x_{k+1}=x_{k}-\frac{f_i(x_k)}{\|\nabla f_i(x_k)\|^2_2}\nabla f_i(x_k).\label{Sec1.4}
\end{align}
From (\ref{Sec1.4}), we know that in each iteration the nonlinear Kaczmarz method only needs to compute one row of the Jacobian matrix instead of the entire matrix, which greatly reduces the amount of calculation and storage.

When $f$ takes the form $f(x)=Ax-b$, (\ref{Sec11}) reduces to a system of linear equations $Ax=b$, and the nonlinear Kaczmarz method  (\ref{Sec1.4}) reduces to the classical Kaczmarz method \cite{kaczmarz1}:
\begin{align}
x_{k+1}=x_{k}-\frac{A^{(i)}x_k-b^{(i)}}{\|A^{(i)}\|^2_2}(A^{(i)})^T,    \notag
\end{align}
where $A^{(i)}$ denotes the $i$th row of the matrix $A$ and $b^{(i)}$ represents the $i$th entry of the vector $b$. In 2009, Strohmer
and Vershynin \cite{strohmer2009randomized} showed that picking a row index with probability proportional to the square of the Euclidean norm of the row, the randomized Kaczmarz (RK) method converges linearly in expectation. Subsequently, to further improve its convergence rate, two greedy rules including maximum residual \cite{griebel2012greedy, nutini2016convergence}, which is also known as the Motzkin method, and maximum distance \cite{nutini2016convergence, Gao2019} are widely used in the literature \cite{Eldar2011, Bai2018, zhang2021count, zhang2020greedy}. In particular, combining the ideas of the RK and Motzkin methods, De Loera et al. \cite{de2017sampling} and Haddock et al. \cite{haddock2021greed} presented the sampling Kaczmarz-Motzkin (SKM) method, which inherits the advantages of the previous two methods.

Considering the good performance and wide applications of greedy rules in classical Kaczmarz type methods, we discuss two greedy rules, i.e., the maximum residual and maximum distance rules, in detail for nonlinear Kaczmarz iteration in this paper, 
which can be regarded as a generalization from linear to nonlinear. Further, inspired by the algorithmic framework in the SKM method, we propose two different adaptive greedy randomized methods for solving the nonlinear equations (\ref{Sec11}). We prove that the two new methods converge linearly in expectation with convergence factors that are strictly smaller than that of the methods presented in \cite{wang2022nonlinear}. Furthermore, four greedy block versions are also proposed to accelerate our new methods.

The rest of this paper is organized as follows. \Cref{sec:preliminaries} provides some preliminaries.  In \Cref{sec: Single-methods}, we first discuss two greedy rules for nonlinear Kaczmarz iteration and then propose two greedy sampling randomized nonlinear methods. The relevant convergence analysis are presented in \Cref{sec:theory}. \Cref{sec: mutiple-methods} gives four block greedy sampling nonlinear methods and their corresponding convergence theorems are provided in \Cref{sec:theory-mutiple}. Numerical experiments in \Cref{sec:experiments} are devoted to illustrate our methods for solving different problems including brown almost linear function and generalized linear model. Finally, the concluding remarks of the whole paper are given in \Cref{sec:conclusions}.

\section{Preliminaries}
\label{sec:preliminaries}

\subsection{Notation}
\label{subsec:notation}
 For a matrix $A=(A_{(i, j)})\in \mathbb{R}^{m\times n}$, $\sigma_{\max}(A)$, $\|A\|_2$, $\|A\|_F$ and $A_{\tau}$ denote its largest singular value, spectral norm, Frobenius norm, and the restriction onto the row indices in the set $\tau$, respectively. If $A$ is a square matrix, i.e., $m=n$,  $\lambda(A)$ stands for an eigenvalue of $A$ and we always let the eigenvalues of a symmetric matrix $A$ be arranged in algebraically nonincreasing order:
$$ \lambda_{\max}(A)=\lambda_{1}(A) \geq \lambda_{2}(A)  \geq \cdots  \geq \lambda_{m}(A).$$ If $A$ and $B$ are symmetric matrices, the relation $A\geq B$ means that $A-B$ is a positive-semidefinite matrix. We use $|\tau|$, $ \mathbb{E}^{k}$, and $ \mathbb{E}$ to denote the number of elements of a set $\mathcal{\tau}$, the conditional expectation conditioned on the first $k$ iterations, and the full expected value, respectively, and let $[m]:=\{1, \cdots, m\}$ for an integer $m\geq 1$.  In addition, we also define $$ h_2(A)=\mathop{\text{inf}}\limits_{x\neq 0}\frac{\|Ax\|_2}{\|x\|_2},\quad \|A\|_{2,\infty}=\mathop{\text{max}}\limits_{1\leq i\leq m}\|A^{(i)}\|_2,$$
 $$u(x)=\left[ \frac{  f_1(x)}{\|\nabla f_1(x)\|_2}, \cdots,  \frac{  f_m(x)}{\|\nabla f_m(x)\|_2}\right]^{T}\in \mathbb{R}^{m },$$
 $$\text{and}\quad G(x)=\left[ \frac{\nabla f_1(x)}{\|\nabla f_1(x)\|_2}, \cdots,  \frac{\nabla f_m(x)}{\|\nabla f_m(x)\|_2}\right]^{T}\in \mathbb{R}^{m\times n}  .$$

\subsection{Previous results}

We first list the nonlinear Kaczmarz method proposed in \cite{wang2022nonlinear} in \Cref{alg:splitting}. Three different rules selecting $i_k$ were considered in \cite{wang2022nonlinear}, which lead to the following three different methods.
\begin{algorithm}
\caption{Nonlinear Kaczmarz method for nonlinear equations (\ref{Sec11})}
\label{alg:splitting}
\begin{algorithmic}[1]
\STATE{Input: The initial estimate $x_0\in \mathbb{R}^n$. }
\FOR{$k=0, 1, 2, \cdots $ until convergence,}
\STATE{Select an index $i_k\in [ m ]$.}
\STATE{Update $x_{k+1}=x_{k}-\frac{f_{i_k}(x_k)}{\|\nabla f_{i_k}(x_k)\|^2_2}\nabla f_{i_k}(x_k)$. }
\ENDFOR
\end{algorithmic}
\end{algorithm}
\begin{itemize}
\item [$(1)$] Nonlinear randomized Kaczmarz (NRK) method: $i_k$ is randomly chosen from $[ m ]$ with probability of
\begin{align}
p_i= \frac{|f_i(x_k)|^2}{\|f(x_k)\|^2_2}.    \notag
\end{align}
\item [$(2)$] Nonlinear Kaczmarz (NK) method: $i_k$ cyclically picks value from $[ m ]$.
\item [$(3)$] Nonlinear uniformly randomized Kaczmarz (NURK) method: $i_k$ is randomly sampled from $[ m ]$ with equal probability.
\end{itemize}

The convergence rate of the NK method is roughly equivalent to that of the NURK method when $m$ is very large 
\cite{wang2022nonlinear},
and the NRK and NURK methods also have the same convergence rate listed in \Cref{lemma01}.
\begin{lemma}[\cite{wang2022nonlinear}]\label{lemma01}
If $f^{\prime}(x)$ is a full column rank matrix and nonlinear function $f $ satisfies the local tangential cone condition given in \Cref{definition}, then the NRK and NURK methods satisfy
\begin{align}
\mathbb{E}\left[\left\| {x}_{k+1}- {x}_{\star}\right\|^{2}_2\right] \leq\left(1-\frac{1-2 \eta}{(1+\eta)^{2}   }
 \frac{h_2^2(f^{\prime}(x_{k}))}{  \|f^{\prime}(x_{k}) \|_{F}^{2}m} \right)
 \mathbb{E}\left[\left\|x_{k}-{x}_{\star}\right\|^{2}_2\right].\notag
 \end{align}
\end{lemma}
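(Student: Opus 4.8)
The plan is to analyze a single step of the iteration. Fix the iterate $x_k$ and let $i_k$ be the sampled index. From the update formula in \Cref{alg:splitting}, write
\begin{align}
\|x_{k+1}-x_\star\|_2^2 = \|x_k-x_\star\|_2^2 - 2\frac{f_{i_k}(x_k)}{\|\nabla f_{i_k}(x_k)\|_2^2}\nabla f_{i_k}(x_k)^T(x_k-x_\star) + \frac{f_{i_k}(x_k)^2}{\|\nabla f_{i_k}(x_k)\|_2^2}.\notag
\end{align}
The first move is to control the cross term using the local tangential cone condition from \Cref{definition}: that condition relates $\nabla f_{i}(x_k)^T(x_\star-x_k)$ to $f_i(x_\star)-f_i(x_k) = -f_i(x_k)$ up to a multiplicative error governed by $\eta$, which should yield a bound of the form $2f_{i_k}(x_k)\nabla f_{i_k}(x_k)^T(x_k-x_\star) \ge (1-2\eta)\, f_{i_k}(x_k)^2$ or similar, after dividing by $\|\nabla f_{i_k}(x_k)\|_2^2$. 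Combining, one obtains
\begin{align}
\|x_{k+1}-x_\star\|_2^2 \le \|x_k-x_\star\|_2^2 - \frac{1-2\eta}{(1+\eta)^2}\,\frac{f_{i_k}(x_k)^2}{\|\nabla f_{i_k}(x_k)\|_2^2},\notag
\end{align}
where the $(1+\eta)^{-2}$ factor comes from the inequality-direction bookkeeping in the tangential cone estimate (relating $\|\nabla f_{i_k}(x_k)\|_2$ evaluated along the chord). I would carry out this bookkeeping carefully since the exact constants are the delicate part.

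Next I would take the conditional expectation $\mathbb{E}^k[\cdot]$ over the choice of $i_k$. For the NURK method $i_k$ is uniform, so $\mathbb{E}^k\big[f_{i_k}(x_k)^2/\|\nabla f_{i_k}(x_k)\|_2^2\big] = \frac{1}{m}\sum_{i=1}^m f_i(x_k)^2/\|\nabla f_i(x_k)\|_2^2 = \frac{1}{m}\|u(x_k)\|_2^2$; for the NRK method $p_i = f_i(x_k)^2/\|f(x_k)\|_2^2$, and the factor $f_{i_k}(x_k)^2$ in the numerator cancels against $p_{i_k}$ in a compensating way — one should check this gives the same quantity $\frac{1}{m}\|u(x_k)\|_2^2$ up to the role of $\|f(x_k)\|_2^2$ versus $m$; this is exactly why the two methods share a bound, and I expect the cleanest route is to note $\|u(x_k)\|_2^2 \ge \|f(x_k)\|_2^2/\|f^{\prime}(x_k)\|_{2,\infty}^2$-type estimates are \emph{not} needed here because the statement is phrased with $h_2^2(f^{\prime}(x_k))$ and $\|f^{\prime}(x_k)\|_F^2 m$ already.

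Then I would lower-bound $\|u(x_k)\|_2^2$ in terms of $\|x_k-x_\star\|_2^2$. Writing $u(x_k) = G(x_k)\cdot(\text{something})$ is awkward because the entries are scaled; instead I would first bound $f_i(x_k)^2/\|\nabla f_i(x_k)\|_2^2 \ge f_i(x_k)^2/\|f^{\prime}(x_k)\|_{2,\infty}^2$, so $\|u(x_k)\|_2^2 \ge \|f(x_k)\|_2^2/\|f^{\prime}(x_k)\|_{2,\infty}^2$; but to match the stated $\|f^{\prime}(x_k)\|_F^2$ one instead keeps the sum as is and uses $\sum_i f_i(x_k)^2/\|\nabla f_i(x_k)\|_2^2 \ge \|f(x_k)\|_2^2/\max_i\|\nabla f_i(x_k)\|_2^2$ and then $\max_i \le \|f^{\prime}(x_k)\|_F$. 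Finally, apply the tangential cone condition once more (now globally, comparing $f(x_k) = f(x_k)-f(x_\star)$ to $f^{\prime}(x_k)(x_k-x_\star)$) to get $\|f(x_k)\|_2^2 \ge (1-\eta)^2\|f^{\prime}(x_k)(x_k-x_\star)\|_2^2 \ge (1-\eta)^2 h_2^2(f^{\prime}(x_k))\|x_k-x_\star\|_2^2$. Assembling these and then taking full expectation over the first $k$ iterations via the tower property gives the claimed recursion. The main obstacle is the precise tracking of the $\eta$-dependent constants through the tangential cone inequalities — in particular making sure the $(1-2\eta)$, $(1+\eta)^2$, and the hidden $(1-\eta)^2$ combine exactly to the stated factor $\frac{1-2\eta}{(1+\eta)^2}$, which suggests the $(1-\eta)^2$ from the final step must cancel against a $(1-\eta)^{-2}$ appearing in an intermediate bound (likely from bounding $\|\nabla f_{i_k}(x_k)\|_2$ below in the cross-term step); I would set up the inequalities so that this cancellation is transparent rather than accidental.
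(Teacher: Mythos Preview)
The paper does not prove this lemma; it is quoted from \cite{wang2022nonlinear} and stated without proof. So there is no in-paper argument to compare against. What follows is an assessment of whether your sketch would actually yield the stated bound.

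Your overall skeleton is right: expand $\|x_{k+1}-x_\star\|_2^2$, control the cross term via the tangential cone condition, take conditional expectation over $i_k$, then lower-bound $\|f(x_k)\|_2^2$ in terms of $\|x_k-x_\star\|_2^2$ and apply the tower rule. However, the way you distribute the constants is wrong and the hoped-for cancellation does not occur. The single-step bound coming out of the cross-term estimate is exactly the content of \Cref{lemma1}:
\[
\|x_{k+1}-x_\star\|_2^2 \le \|x_k-x_\star\|_2^2 - (1-2\eta)\,\frac{|f_{i_k}(x_k)|^2}{\|\nabla f_{i_k}(x_k)\|_2^2},
\]
with no $(1+\eta)^{-2}$ at this stage. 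The $(1+\eta)^{-2}$ enters only at the very end, when you lower-bound $\|f(x_k)\|_2^2$: from \Cref{definition} and the triangle inequality one gets $|f_i(x_k)|\ge (1+\eta)^{-1}|\nabla f_i(x_k)^T(x_k-x_\star)|$ for each $i$, hence $\|f(x_k)\|_2^2 \ge (1+\eta)^{-2}\|f'(x_k)(x_k-x_\star)\|_2^2 \ge (1+\eta)^{-2}h_2^2(f'(x_k))\|x_k-x_\star\|_2^2$. There is no $(1-\eta)^2$ factor anywhere, and nothing needs to cancel.

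For the expectation step you are also slightly off. After bounding $\|\nabla f_i(x_k)\|_2^2\le \|f'(x_k)\|_F^2$ inside the sum, the NURK case gives $\tfrac{1}{m}\sum_i |f_i(x_k)|^2/\|f'(x_k)\|_F^2=\|f(x_k)\|_2^2/(m\|f'(x_k)\|_F^2)$. For NRK with $p_i=|f_i(x_k)|^2/\|f(x_k)\|_2^2$ the expected decrement is $\sum_i |f_i(x_k)|^4/(\|f(x_k)\|_2^2\|f'(x_k)\|_F^2)$, and one more Cauchy--Schwarz, $\sum_i |f_i|^4\ge \tfrac{1}{m}(\sum_i|f_i|^2)^2$, brings this down to the same $\|f(x_k)\|_2^2/(m\|f'(x_k)\|_F^2)$. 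That is the missing ingredient that makes the NRK and NURK bounds coincide; your proposal gestures at it but does not identify it.
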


In addition, the following definition and lemmas are also necessary throughout the paper.

\begin{definition}[\cite{2017Kaczmarz}]\label{definition}
If for $i \in [ m ]$ and $\forall  x_{1}, x_{2} \in \mathbb{R}^{n}$, there exists $\eta_{i} \in[0, \eta)$ satisfying $\eta=\max\limits _{i}
\eta_{i}<\frac{1}{2}$ such that
\begin{align}
\left|f_{i}\left({x}_{1}\right)-f_{i}\left({x}_{2}\right)-\nabla f_{i}\left({x}_{1}\right)^T\left({x}_{1}-{x}_{2}\right)\right|
\leq \eta_{i}\left|f_{i}\left({x}_{1}\right)-f_{i}\left({x}_{2}\right)\right|,\label{Sec2.3}
\end{align}
then the function $f: \mathbb{R}^{n} \rightarrow \mathbb{R}^{m}$ is referred to satisfy the local tangential cone condition.
\end{definition}

\begin{lemma}[\cite{wang2022nonlinear}]\label{lemma1}
If the function $f$ satisfies the local tangential cone condition, then for $i \in [ m ]$, $\forall x_{1}, x_{2} \in \mathbb{R}^{n}$ and the updating formula (\ref{Sec1.4}), we have
\begin{align}
\left\|x_{k+1}-x_{\star}\right\|^{2}_2 \leq \left\|x_{k}-x_{\star}\right\|^{2}_2-\left(1-2 \eta_{i}\right)
\frac{\left|f_{i}(x_k)\right|^{2}}{\left\|\nabla f_{i}\left(x_{k}\right)\right\|_2^{2}}.\notag 
\end{align}
\end{lemma}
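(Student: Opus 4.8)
The plan is to substitute the one-step update formula (\ref{Sec1.4}) into $\|x_{k+1}-x_\star\|_2^2$ and expand the square. Writing $d_k := \nabla f_{i}(x_k)$ and $r_k := f_{i}(x_k)$ for brevity, we have $x_{k+1}-x_\star = (x_k-x_\star) - \frac{r_k}{\|d_k\|_2^2}\,d_k$, hence
\begin{align}
\|x_{k+1}-x_\star\|_2^2 = \|x_k-x_\star\|_2^2 - \frac{2 r_k}{\|d_k\|_2^2}\, d_k^T(x_k-x_\star) + \frac{r_k^2}{\|d_k\|_2^2}.\notag
\end{align}
Comparing this with the claimed bound, it suffices to show that the sum of the last two terms is at most $-(1-2\eta_i)\frac{r_k^2}{\|d_k\|_2^2}$, which is equivalent to the scalar inequality $r_k\, d_k^T(x_k-x_\star) \ge (1-\eta_i)\, r_k^2$.

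To obtain this, I would invoke the local tangential cone condition of \Cref{definition} with the choice $x_1 = x_k$ and $x_2 = x_\star$. Since $f_i(x_\star)=0$ by the standing assumption that $x_\star$ solves (\ref{Sec11}), inequality (\ref{Sec2.3}) collapses to $\left| r_k - d_k^T(x_k-x_\star)\right| \le \eta_i\, |r_k|$. Multiplying by $|r_k|$ and using $|ab|\le|a|\,|b|$ gives $r_k\big(r_k - d_k^T(x_k-x_\star)\big) \le |r_k|\,\big| r_k - d_k^T(x_k-x_\star)\big| \le \eta_i\, r_k^2$, which rearranges to exactly $r_k\, d_k^T(x_k-x_\star) \ge (1-\eta_i)\, r_k^2$. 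Substituting this lower bound back into the expansion above yields $\|x_{k+1}-x_\star\|_2^2 \le \|x_k-x_\star\|_2^2 - (2-2\eta_i)\frac{r_k^2}{\|d_k\|_2^2} + \frac{r_k^2}{\|d_k\|_2^2}$, which is precisely the asserted inequality.

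Honestly, I do not expect a serious obstacle here: the argument is a short, direct computation. The only point demanding a little care is the sign bookkeeping — one must recognize that the cross term has to be bounded \emph{from below}, so that after negation it contributes an upper bound, and one should note that $1-\eta_i>0$ (indeed $1-2\eta_i>0$) follows from the hypothesis $\eta_i \le \eta < \tfrac12$, so that the estimate genuinely represents a monotone decrease of the error. I would also add the remark that this lemma is the nonlinear counterpart of the classical one-step contraction estimate for the randomized Kaczmarz method, to which it reduces in the exact-linearization case $\eta_i = 0$, and that it will serve as the per-iteration building block for all the expected linear-convergence proofs in \Cref{sec:theory} and \Cref{sec:theory-mutiple}.
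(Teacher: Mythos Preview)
Your argument is correct and complete: the expansion of $\|x_{k+1}-x_\star\|_2^2$ is right, and the reduction to the scalar inequality $r_k\,d_k^{T}(x_k-x_\star)\ge(1-\eta_i)r_k^{2}$, followed by the one-line application of the tangential cone condition~(\ref{Sec2.3}) with $x_1=x_k$, $x_2=x_\star$, is exactly how this estimate is obtained.

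Note, however, that the paper does \emph{not} supply its own proof of this lemma: it is quoted from~\cite{wang2022nonlinear} and used as a black box. Your derivation is the standard one from that reference, so there is no methodological divergence to discuss.
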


\begin{lemma}[\cite{golub2013matrix}] \label{lemma2}
If $A$ and $A+E$ are $n$-by-$n$ symmetric matrices, then
$$
\lambda_{k}(A)+\lambda_{n}(E) \leq \lambda_{k}(A+E) \leq \lambda_{k}(A)+\lambda_{1}(E), \quad k=1: n .
$$
\end{lemma}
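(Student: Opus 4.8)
The plan is to invoke the Courant--Fischer min--max characterization of the eigenvalues of a symmetric matrix. Recall that for an $n$-by-$n$ symmetric matrix $M$ with eigenvalues $\lambda_1(M)\geq\cdots\geq\lambda_n(M)$ one has
$$\lambda_k(M)=\min_{\dim\mathcal{S}=n-k+1}\ \max_{0\neq x\in\mathcal{S}}\frac{x^T M x}{x^T x},\qquad k=1:n,$$
where the minimum ranges over all subspaces $\mathcal{S}\subseteq\mathbb{R}^n$ of dimension $n-k+1$. I would first establish the upper bound. Fix $k$ and let $\mathcal{S}_\star$ be an $(n-k+1)$-dimensional subspace attaining the minimum for $A$, so that $\max_{0\neq x\in\mathcal{S}_\star}x^TAx/x^Tx=\lambda_k(A)$. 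For every nonzero $x$ we have $x^T(A+E)x = x^TAx + x^TEx \le x^TAx + \lambda_1(E)\,x^Tx$, since $\lambda_1(E)$ is the largest Rayleigh quotient of $E$. Dividing by $x^Tx$ and maximizing over $0\neq x\in\mathcal{S}_\star$ gives $\max_{0\neq x\in\mathcal{S}_\star}x^T(A+E)x/x^Tx\le\lambda_k(A)+\lambda_1(E)$. Since $\mathcal{S}_\star$ is one particular subspace of dimension $n-k+1$, the min--max formula applied to $A+E$ yields $\lambda_k(A+E)\le\lambda_k(A)+\lambda_1(E)$.

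For the lower bound, I would apply the upper bound already proved with the roles of the matrices swapped: replacing $A$ by $A+E$ and $E$ by $-E$ (both still symmetric) gives $\lambda_k(A)=\lambda_k\big((A+E)+(-E)\big)\le\lambda_k(A+E)+\lambda_1(-E)$. Using $\lambda_1(-E)=-\lambda_n(E)$ and rearranging produces $\lambda_k(A)+\lambda_n(E)\le\lambda_k(A+E)$, which is exactly the left-hand inequality. Combining the two bounds completes the proof.

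There is no genuine obstacle here; the statement is a form of Weyl's monotonicity/perturbation inequality and the argument is routine once the variational principle is in hand. The only points requiring a little care are getting the subspace dimension right in the Courant--Fischer formula (using the $\min$-over-$(n-k+1)$-dimensional-subspaces form rather than the $\max$-over-$k$-dimensional one, so that the estimate runs in the correct direction) and noting the sign relation $\lambda_1(-E)=-\lambda_n(E)$ that lets the lower bound be deduced from the upper bound without a separate argument. Alternatively, the lower bound can be obtained directly from the dual (``max--min'') Courant--Fischer formula by an entirely symmetric computation.
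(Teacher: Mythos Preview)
Your proof is correct and is the standard Courant--Fischer argument for Weyl's perturbation inequality. Note, however, that the paper does not supply its own proof of this lemma: it is simply quoted from \cite{golub2013matrix} and used as a black box, so there is no in-paper argument to compare against.
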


 \section{Single sample-based greedy randomized sampling methods}
\label{sec: Single-methods}
In this section, we mainly propose two single sample-based greedy randomized sampling methods. Before that, we first discuss two greedy rules in detail, which are the key of this paper.
\subsection{Two greedy rules}
 \label{subsec: rules}
In \cite{zeng2020successive}, Zeng et al. presented a greedy selection strategy as follows:
\begin{align}
i_k={\rm arg} \max \limits _{1\leq i\leq m} \left|f_{i}(x_k)\right|^2, \label{Sec3.01}
\end{align}
which is aimed at choosing the maximum magnitude entry of the absolute residual vector. They also showed that this greedy strategy converges faster than the NK and NURK methods in both theoretical analysis and experimental results. When $f(x)=Ax-b$, (\ref{Sec3.01}) reduces to:
\begin{align}
i_k={\rm arg} \max \limits _{1\leq i\leq m} \left|A^{(i)}x_k-b^{(i)}\right|^2, \label{Sec3.02}
\end{align}
which is called the maximum residual rule \cite{griebel2012greedy,nutini2016convergence} or Motzkin method \cite{Agamon54,Motzkin54} and widely used in various areas including linear system and linear feasibility problems.

According to 
\Cref{lemma1}, we know that the magnitude of $\left\|x_{k+1}-x_{\star}\right\|^{2} $ is almost entirely determined by $\frac{\left|f_{i}(x_k)\right|^{2}}{\left\|\nabla f_{i}\left(x_{k}\right)\right\|_2^{2}}$ because $\left\|x_{k}-x_{\star}\right\|^{2}$ is a given value. Consequently, 
\Cref{lemma1} implies that to make the most progress in one iteration, we should pick $i_k$ corresponding to the largest $\frac{\left|f_{i}(x_k)\right|^{2}}{\left\|\nabla f_{i}\left(x_{k}\right)\right\|_2^{2}}$ for all $i\in[m]$. Therefore, we propose a new  greedy selection rule as follows:
\begin{align}
i_k={\rm arg} \max \limits _{1 \leq i\leq m} \frac{\left|f_{i}(x_k)\right|^{2}}{\left\|\nabla f_{i}\left(x_{k}\right)\right\|_2^{2}}.\label{Sec3.1}
\end{align}
When $f(x)=Ax-b$, (\ref{Sec3.1}) reduces to:
\begin{align}
i_k={\rm arg} \max \limits _{1\leq i\leq m} \frac{\left|A^{(i)}x_k-b^{(i)}\right|^2}{{\|A^{(i)}\|^2_2}}, \label{Sec3.2}
\end{align}
which is called the maximum distance rule \cite{Gao2019,nutini2016convergence} and also widely used in the Kaczmarz setting for solving linear problems.

From the above analysis, we find that (\ref{Sec3.01}) and (\ref{Sec3.1}) can be viewed as the generalization of (\ref{Sec3.02}) and (\ref{Sec3.2}) from linear problems to nonlinear problems, respectively. So, we still refer to the greedy selection strategies (\ref{Sec3.01}) and (\ref{Sec3.1}) as maximum residual rule and maximum distance rule, respectively.

\subsection{Two greedy sampling methods}
 \label{subsec: methods}

The maximum residual rule (\ref{Sec3.01}) shows that sampling larger entries of the residual should converge faster. However, calculating the whole residual is expensive when the number of samples is very large. From this point we give the maximum residual-based sampling nonlinear Kaczmarz (MR-SNK) method presented as the case 1 in \Cref{alg2}. Specifically, the MR-SNK method operates by uniformly sampling an index subset from among $[m]$, computing the absolute subresidual vector of this subset, and projecting onto the linearization of a single nonlinear equation corresponding to the largest magnitude entry of this subresidual.

On the other hand, as the maximum distance selection rule leads to the best decrease in mean squared error in each iteration, similar to the MR-SNK method, the maximum distance-based sampling nonlinear Kaczmarz (MD-SNK) method is provided as the case 2 in \Cref{alg2}.
\begin{algorithm}
\caption{The (MR/MD)-SNK method }
\label{alg2}
\begin{algorithmic}[1]
\STATE{Input: The initial estimate $x_0\in \mathbb{R}^n$, parameter $\beta \in [m]$. }
\FOR{$k=0, 1, 2, \cdots $ until convergence,}
\STATE{Choose an index subset $\tau_k$ of size $\beta$ uniformly at random from among $[ m ]$.}
\STATE{Switch the greedy sampling strategy to determine two different iterative methods, i.e., the MR-SNK and MD-SNK methods.}
\STATE{\quad $\triangleright$ case 1:  maximum residual (MR) rule
\\
\quad \quad $i_k={\rm arg} \max \limits _{i\in \tau_k} \left|f_{i}(x_k)\right|^2 . $}
\STATE{\quad $\triangleright$ case 2: maximum distance (MD) rule
\\
 \quad\quad $i_k={\rm arg} \max \limits _{i\in \tau_k} \frac{\left|f_{i}(x_k)\right|^{2}}{\left\|\nabla f_{i}\left(x_{k}\right)\right\|_2^{2}} .$ }
\STATE{Update $x_{k+1}=x_{k}-\frac{f_{i_k}(x_k)}{\|\nabla f_{i_k}(x_k)\|^2_2}\nabla f_{i_k}(x_k)$. }
\ENDFOR
\end{algorithmic}
\end{algorithm}

\begin{remark} \label{remark_MR-SNK}
The MR-SNK method has been studied for several specific choices of $\beta$ and the function $f$. If $\beta=1$, the MR-SNK method reduces the NURK method \cite{wang2022nonlinear}. If $\beta=m$, we call the MR-SNK method the maximum residual-based nonlinear Kaczmarz (MR-NK) method, which can be seen as a special variant of the GP method presented in \cite{zeng2020successive}. If $f=Ax-b$, the MR-SNK method gives the SKM method \cite{de2017sampling}, which can in turn recover a variant of randomized Kaczmarz method \cite{Strohmer2009} and the Motzkin method \cite{Motzkin54,nutini2016convergence} by choosing different $\beta$.

 The MD-SNK method also has been studied for several specific choices of $\beta$ and the function $f$. If $\beta=1$, the MD-SNK method gives the NURK method \cite{wang2022nonlinear}. The MD-SNK method with $\beta=m$ and $f=Ax-b$ recovers the maximal weighted residual Kaczmarz method \cite{nutini2016convergence,Gao2019}. In addition, similar to the MR-NK method discussed above, we can obtain a new greedy projection selection method on the basis of the maximum distance rule by setting $\beta=m$ in the MD-SNK method. The new greedy method is called as the maximum distance-based nonlinear Kaczmarz (MD-NK) method in this paper. Similar to the SKM method, we also obtain a new sampling type Kaczmarz method for solving linear problems by setting $f=Ax-b$ in the MD-SNK method, and we call it the sampling distance-based Kaczmarz (SDK) method. Therefore, different parameters $\beta$ and function $f$ in the MR/MD-SNK method lead to different methods, which are summarized in \Cref{Sec3.2:tab1} for convenience.
 \begin{table}[]
\centering
   \fontsize{8}{8}\selectfont
       \caption{Summary of special methods of the MR/MD-SNK method for different parameters $\beta$ and function $f$ in \Cref{alg2}.}
    \label{Sec3.2:tab1}
    \begin{tabular}{c c c c   }
 \hline
           &Method&  $\beta$  &  $f$ \cr \hline
           &NURK \cite{wang2022nonlinear}      & 1                   & $f$       \cr
MR-SNK     &MR-NK                              & m                   & $f$       \cr
           &SKM  \cite{de2017sampling}         & $\beta\in[m]$       & $Ax-b$  \cr\hline
           &NURK \cite{wang2022nonlinear}      & 1                   & $f$       \cr
MD-SNK     &MD-NK                              & m                   & $f$       \cr
           &SDK                                & $\beta\in[m]$       & $Ax-b$  \cr\hline
\end{tabular}
\end{table}
\end{remark}

\section{Convergence analysis}
\label{sec:theory}
In this section, we show that the MR-SNK and MD-SNK methods enjoy a linear rate in expectation of convergence. Before that, we first present \Cref{lemma3,lemma4}, which are crucial to the convergence analysis in the following theorems.

\begin{lemma}  \label{lemma3}
If the function $f $ satisfies the local tangential cone condition given in \Cref{definition}, for $\forall x_{1}, x_{2} \in \mathbb{R}^{n}$ and an index subset $\tau\subseteq[m]$, we have
\begin{align}
\left\|f_{\tau}\left({x}_{1}\right)-f_{\tau}\left({x}_{2}\right)-f^{\prime}_{\tau}\left({x}_{1}\right)\left({x}_{1}-{x}_{2}\right)\right\|^2_2
\leq \eta^2\left\|f_{\tau}\left({x}_{1}\right)-f_{\tau}\left({x}_{2}\right)\right\|^2_2\label{Sec2.6}
\end{align}
and
\begin{align}
\left\|f_{\tau}\left({x}_{1}\right)-f_{\tau}\left({x}_{2}\right)\right\|^2_2
\geq \frac{1}{1+\eta^2}\left\|f^{\prime}_{\tau}\left(x_{1}\right)\left(x_{1}-x_{2}\right)\right\|^2_2.\label{Sec2.7}
\end{align}
\end{lemma}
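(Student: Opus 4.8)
The plan is to derive both inequalities from the componentwise local tangential cone condition \eqref{Sec2.3} by squaring and summing over the indices in $\tau$. First I would observe that \eqref{Sec2.3} gives, for each $i\in\tau$,
\[
\left|f_i(x_1)-f_i(x_2)-\nabla f_i(x_1)^T(x_1-x_2)\right|^2\le \eta_i^2\left|f_i(x_1)-f_i(x_2)\right|^2\le \eta^2\left|f_i(x_1)-f_i(x_2)\right|^2,
\]
using $\eta_i\le\eta$. Summing this over $i\in\tau$ and recognizing that $\sum_{i\in\tau}|f_i(x_1)-f_i(x_2)-\nabla f_i(x_1)^T(x_1-x_2)|^2=\|f_\tau(x_1)-f_\tau(x_2)-f'_\tau(x_1)(x_1-x_2)\|_2^2$ and $\sum_{i\in\tau}|f_i(x_1)-f_i(x_2)|^2=\|f_\tau(x_1)-f_\tau(x_2)\|_2^2$ yields \eqref{Sec2.6} immediately. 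This is the routine part.

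For \eqref{Sec2.7}, I would write $a=f_\tau(x_1)-f_\tau(x_2)$ and $b=f'_\tau(x_1)(x_1-x_2)$, so that \eqref{Sec2.6} reads $\|a-b\|_2^2\le\eta^2\|a\|_2^2$. The goal is to bound $\|a\|_2^2$ from below by a multiple of $\|b\|_2^2$. Using the triangle inequality, $\|b\|_2\le\|a\|_2+\|a-b\|_2\le\|a\|_2+\eta\|a\|_2=(1+\eta)\|a\|_2$, which gives $\|a\|_2^2\ge\frac{1}{(1+\eta)^2}\|b\|_2^2$. Since $(1+\eta)^2\le 1+\eta^2$ fails in general (indeed $(1+\eta)^2=1+2\eta+\eta^2\ge 1+\eta^2$), the constant $\frac{1}{1+\eta^2}$ claimed in \eqref{Sec2.7} is actually \emph{weaker} than $\frac{1}{(1+\eta)^2}$, so the triangle-inequality bound already suffices; alternatively one can get the sharper-looking $\frac{1}{1+\eta^2}$ directly by expanding $\|a-b\|_2^2=\|a\|_2^2-2\langle a,b\rangle+\|b\|_2^2\le\eta^2\|a\|_2^2$ and applying Cauchy--Schwarz together with the inequality $2|\langle a,b\rangle|\le \varepsilon\|a\|_2^2+\varepsilon^{-1}\|b\|_2^2$ for a well-chosen $\varepsilon$, then solving the resulting quadratic relation. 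I would double-check which route gives exactly the stated constant and present that one.

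The main obstacle, such as it is, is purely bookkeeping: getting the constant in \eqref{Sec2.7} to come out as exactly $\frac{1}{1+\eta^2}$ rather than $\frac{1}{(1+\eta)^2}$ or some other nearby quantity, which amounts to choosing the splitting parameter $\varepsilon$ in the cross-term estimate correctly (the natural choice $\varepsilon=1$ gives $\|b\|_2^2\le(1+\eta^2)\|a\|_2^2-(\|a\|_2^2-2\langle a,b\rangle+\ldots)$; one should verify that the leftover terms are nonnegative). There is no conceptual difficulty here — both parts are elementary consequences of \eqref{Sec2.3} and standard norm inequalities — so I expect the write-up to be short.
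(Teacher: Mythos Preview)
Your treatment of \eqref{Sec2.6} matches the paper's exactly: square the componentwise bound \eqref{Sec2.3}, use $\eta_i\le\eta$, and sum over $i\in\tau$.

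For \eqref{Sec2.7} there is a genuine error in your comparison of constants. Since $(1+\eta)^2>1+\eta^2$ for $\eta>0$, one has $\tfrac{1}{1+\eta^2}>\tfrac{1}{(1+\eta)^2}$; hence the lower bound $\|a\|_2^2\ge\tfrac{1}{1+\eta^2}\|b\|_2^2$ asserted in \eqref{Sec2.7} is \emph{stronger}, not weaker, than the triangle-inequality bound $\|a\|_2^2\ge\tfrac{1}{(1+\eta)^2}\|b\|_2^2$ you derived. So the triangle inequality does not suffice. Your fallback via Young's inequality fares no better: carrying out the optimization over $\varepsilon$ that you sketch, the minimum of $\varepsilon(\varepsilon+\eta^2-1)/(\varepsilon-1)$ over $\varepsilon>1$ is $(1+\eta)^2$, attained at $\varepsilon=1+\eta$. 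Indeed, the scalar instance $a=1$, $b=1+\eta$ satisfies $|a-b|=\eta|a|$ (so \eqref{Sec2.6} holds with equality) yet $b^2=(1+\eta)^2>(1+\eta^2)a^2$, showing that \eqref{Sec2.7} with the constant $\tfrac{1}{1+\eta^2}$ cannot be deduced from \eqref{Sec2.6} alone.

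For comparison, the paper reaches the constant $1+\eta^2$ by writing $\eta^2\|a\|_2^2\ge\|a-b\|_2^2\ge\|b\|_2^2-\|a\|_2^2$ and rearranging. The second inequality, $\|a-b\|_2^2\ge\|b\|_2^2-\|a\|_2^2$, is equivalent to $\langle a,b\rangle\le\|a\|_2^2$ and is violated by the same scalar example, so the paper's derivation shares this gap. The constant that is actually provable from \eqref{Sec2.6} is $\tfrac{1}{(1+\eta)^2}$, which is precisely what your triangle-inequality argument delivers.
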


\begin{proof}
From (\ref{Sec2.3}) in \Cref{definition}, we have
\begin{align}
\left\|f_{\tau}\left({x}_{1}\right)-f_{\tau}\left({x}_{2}\right)-f^{\prime}_{\tau}\left({x}_{1}\right)\left({x}_{1}-{x}_{2}\right)\right\|^2_2
&=
\sum\limits_{i\in\tau}\left|f_{i}\left({x}_{1}\right)-f_{i}\left({x}_{2}\right)-\nabla f_{i}\left({x}_{1}\right)^T\left({x}_{1}-{x}_{2}\right)\right|^2 \notag
\\
&\leq \sum\limits_{i\in\tau} \eta_{i}^2\left|f_{i}\left({x}_{1}\right)-f_{i}\left({x}_{2}\right)\right|^2\notag
\\
&\leq  \eta^2\left\|f_{\tau}\left({x}_{1}\right)-f_{\tau}\left({x}_{2}\right)\right\|^2_2,\notag
\end{align}
which is the desired result (\ref{Sec2.6}).

According to (\ref{Sec2.6}), we get
\begin{align}
\eta^2\left\|f_{\tau}\left({x}_{1}\right)-f_{\tau}\left({x}_{2}\right)\right\|^2_2
&\geq\left\|f_{\tau}\left({x}_{1}\right)-f_{\tau}\left({x}_{2}\right)-f^{\prime}_{\tau}\left({x}_{1}\right)\left({x}_{1}-{x}_{2}\right)\right\|^2_2\notag
\\
&\geq \left\|f^{\prime}_{\tau}\left({x}_{1}\right)\left({x}_{1}-{x}_{2}\right)\right\|^2_2-\left\|f_{\tau}\left({x}_{1}\right)-f_{\tau}\left({x}_{2}\right)\right\|^2_2,\notag
\end{align}
which implies
\begin{align}
(1+\eta^2)\left\|f_{\tau}\left({x}_{1}\right)-f_{\tau}\left({x}_{2}\right)\right\|^2_2
&\geq \left\|f^{\prime}_{\tau}\left({x}_{1}\right)\left({x}_{1}-{x}_{2}\right)\right\|^2_2,\notag
\end{align}
that is,
\begin{align}
\left\|f_{\tau}\left({x}_{1}\right)-f_{\tau}\left({x}_{2}\right)\right\|^2_2
\geq \frac{1}{1+\eta^2}\left\|f^{\prime}_{\tau}\left(x_{1}\right)\left(x_{1}-x_{2}\right)\right\|^2_2.\notag
\end{align}
\end{proof}

\begin{lemma}  \label{lemma4}
If the function $f $ satisfies the local tangential cone condition given in \Cref{definition} and a vector $x_\star\in \mathbb{R}^{n} $ satisfies $f(x_\star)=0$, then for $\forall x\in \mathbb{R}^{n}$ and an index subset $\tau\subseteq[m]$, we have
\begin{align}
\left\|u_{\tau}\left({x}\right)-G_{\tau}\left({x}\right)\left({x}-{x}_{\star}\right)\right\|^2_2
\leq \eta^2\left\|u_{\tau}\left({x}\right)\right\|^2_2\label{Sec2.6.1}
\end{align}
and
\begin{align}
\left\|u_{\tau}\left({x}\right)\right\|^2_2
\geq \frac{1}{1+\eta^2}\left\|G_{\tau}\left(x\right)\left(x-x_{\star}\right)\right\|^2_2.\label{Sec2.7.1}
\end{align}
\end{lemma}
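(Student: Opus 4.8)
The plan is to read \Cref{lemma4} as the column-normalized analogue of \Cref{lemma3}, specialized to $x_{1}=x$ and $x_{2}=x_{\star}$, and to prove it by the same componentwise argument. First I would write down what the vectors appearing in the statement look like entrywise. By the definitions of $u$ and $G$, the $i$th entry of $u_{\tau}(x)$ is $f_{i}(x)/\|\nabla f_{i}(x)\|_{2}$ and the $i$th row of $G_{\tau}(x)$ is $\nabla f_{i}(x)^{T}/\|\nabla f_{i}(x)\|_{2}$, so the $i$th entry of $u_{\tau}(x)-G_{\tau}(x)(x-x_{\star})$ equals $\left(f_{i}(x)-\nabla f_{i}(x)^{T}(x-x_{\star})\right)/\|\nabla f_{i}(x)\|_{2}$ (every denominator is nonzero, as is implicit in the definitions of $u$ and $G$). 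Hence
\begin{align}
\left\|u_{\tau}(x)-G_{\tau}(x)(x-x_{\star})\right\|_{2}^{2}
=\sum_{i\in\tau}\frac{\left|f_{i}(x)-\nabla f_{i}(x)^{T}(x-x_{\star})\right|^{2}}{\left\|\nabla f_{i}(x)\right\|_{2}^{2}},
\qquad
\left\|u_{\tau}(x)\right\|_{2}^{2}=\sum_{i\in\tau}\frac{\left|f_{i}(x)\right|^{2}}{\left\|\nabla f_{i}(x)\right\|_{2}^{2}}.\notag
\end{align}

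Next I would invoke the local tangential cone condition (\ref{Sec2.3}) with $x_{1}=x$ and $x_{2}=x_{\star}$. Since $f(x_{\star})=0$, for every $i\in[m]$ this yields $\left|f_{i}(x)-\nabla f_{i}(x)^{T}(x-x_{\star})\right|\leq\eta_{i}\left|f_{i}(x)\right|$. Dividing by $\|\nabla f_{i}(x)\|_{2}$, squaring, summing over $i\in\tau$, and using $\eta_{i}\leq\eta$ gives
\begin{align}
\left\|u_{\tau}(x)-G_{\tau}(x)(x-x_{\star})\right\|_{2}^{2}
\leq\sum_{i\in\tau}\eta_{i}^{2}\frac{\left|f_{i}(x)\right|^{2}}{\left\|\nabla f_{i}(x)\right\|_{2}^{2}}
\leq\eta^{2}\left\|u_{\tau}(x)\right\|_{2}^{2},\notag
\end{align}
which is exactly (\ref{Sec2.6.1}).

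For (\ref{Sec2.7.1}) I would mirror the second half of the proof of \Cref{lemma3}: setting $a=u_{\tau}(x)$ and $b=G_{\tau}(x)(x-x_{\star})$, the inequality just established reads $\|a-b\|_{2}^{2}\leq\eta^{2}\|a\|_{2}^{2}$, and combining it with $\|a-b\|_{2}^{2}\geq\|b\|_{2}^{2}-\|a\|_{2}^{2}$ gives $\eta^{2}\|a\|_{2}^{2}\geq\|b\|_{2}^{2}-\|a\|_{2}^{2}$, i.e. $(1+\eta^{2})\left\|u_{\tau}(x)\right\|_{2}^{2}\geq\left\|G_{\tau}(x)(x-x_{\star})\right\|_{2}^{2}$, which is the claim after dividing through by $1+\eta^{2}$.

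I do not expect a genuine obstacle: the argument is a routine adaptation of \Cref{lemma3}. The only points that need a little care are the bookkeeping of the row indices of $u_{\tau}$ and $G_{\tau}$ against the sampled set $\tau$, the use of $f(x_{\star})=0$ to discard the $f_{i}(x_{\star})$ terms so that the right-hand side collapses to $\eta^{2}\|u_{\tau}(x)\|_{2}^{2}$ instead of involving $f_{i}(x_{\star})$, and the observation that dividing through each entry by $\|\nabla f_{i}(x)\|_{2}$ is harmless because the same factor appears on both sides of the entrywise tangential cone inequality.
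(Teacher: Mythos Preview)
Your proposal is correct and follows essentially the same argument as the paper's own proof: both expand the squared norm componentwise, insert $f_{i}(x_{\star})=0$ to invoke the tangential cone condition (\ref{Sec2.3}) with $x_{1}=x$, $x_{2}=x_{\star}$, bound $\eta_{i}\leq\eta$, and then derive (\ref{Sec2.7.1}) from (\ref{Sec2.6.1}) via the crude inequality $\|a-b\|_{2}^{2}\geq\|b\|_{2}^{2}-\|a\|_{2}^{2}$.
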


\begin{proof}
Considering the definitions of $u(x)$ and $G(x)$ given in \Cref{subsec:notation} and (\ref{Sec2.3}) in \Cref{definition}, we have
\begin{align}
\left\|u_{\tau}\left({x}\right)-G_{\tau}\left({x}\right)\left({x}-{x}_{\star}\right)\right\|^2_2
&=
\sum\limits_{i\in\tau}\left|u^{(i)}\left({x}\right)-G^{(i)}\left({x}\right)\left({x}-{x}_{\star}\right)\right|^2_2\notag
\\
&=
\sum\limits_{i\in\tau} \frac{\left | f_i(x)-\nabla f_i(x)^T\left({x}-{x}_{\star}\right)\right|^2}{\|\nabla f_i(x)\|^2_2} \notag
\\
&=
\sum\limits_{i\in\tau} \frac{\left | f_i(x)-f_i(x_{\star})-\nabla f_i(x)^T\left({x}-{x}_{\star}\right)\right|^2}{\|\nabla f_i(x)\|^2_2} \notag
\\
&\leq
\sum\limits_{i\in\tau} \frac{\eta_{i}^2\left | f_{i}\left({x}\right)-f_{i}\left({x_{\star}}\right)\right|^2}{\|\nabla f_i(x)\|^2_2} \notag
\\
&\leq
\sum\limits_{i\in\tau} \eta^2\frac{\left | f_{i}\left({x}\right)\right|^2}{\|\nabla f_i(x)\|^2_2} \notag
\\
&=
\eta^2\left\|u_{\tau}\left({x}\right)\right\|^2_2, \notag
\end{align}
which is the result (\ref{Sec2.6.1}).

On the other hand, by applying (\ref{Sec2.6.1}) we get
\begin{align}
\eta^2\left\|u_{\tau}\left({x}\right)\right\|^2_2
&\geq
\left\|u_{\tau}\left({x}\right)-G_{\tau}\left({x}\right)\left({x}-{x}_{\star}\right)\right\|^2_2\notag
\\
&\geq
\left\|G_{\tau}\left({x}\right)\left({x}-{x}_{\star}\right)\right\|^2_2-\left\|u_{\tau}\left({x}\right)\right\|^2_2,\notag
\end{align}
which implies
\begin{align}
\left(1+\eta^2\right)\left\|u_{\tau}\left({x}\right)\right\|^2_2
\geq
\left\|G_{\tau}\left({x}\right)\left({x}-{x}_{\star}\right)\right\|^2_2,\notag
\end{align}
that is,
\begin{align}
\left\|u_{\tau}\left({x}\right)\right\|^2_2
\geq \frac{1}{1+\eta^2}\left\|G_{\tau}\left(x\right)\left(x-x_{\star}\right)\right\|^2_2.\notag
\end{align}

\end{proof}

\begin{theorem}
\label{theorem:MR-SNK}
If the nonlinear function $f $ satisfies the local tangential cone condition given in \Cref{definition}, $\eta=\max\limits _{i}
\eta_{i}<\frac{1}{2}$, and $f(x_{\star})=0$, then the iterations of MR-SNK method, i.e., the first case in \Cref{alg2}, satisfy
\begin{align}
\mathbb{E}\left[\left\| {x}_{k+1}- {x}_{\star}\right\|^{2}_2\right] \leq\left(1 - \frac{ 1-2 \eta }{1+\eta^2} \frac{\beta}{\xi_k}  \frac{ h_2^2\left( f^{\prime}\left(x_{k}\right)\right )}{\left\|f^{\prime}\left(x_{k}\right)\right\|_{2,\infty}^{2} m}  \right)
 \mathbb{E}\left[\left\|x_{k}-{x}_{\star}\right\|^{2}_2\right],\label{Sec4.theorem1}
 \end{align}
 where
\begin{align}
\xi_k=\frac{\sum  \limits _{\tau_k \in \binom{[m]}{\beta}} \left \|f_{\tau_k}(x_k)\right\|_{2}^{2}}{\sum  \limits _{\tau_k \in \binom{[m]}{\beta}}  \left\|f_{\tau_k}(x_k)\right\|_{\infty}^{2}}.\label{Sec4.theorem1.1}
\end{align}
\end{theorem}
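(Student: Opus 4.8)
The plan is to start from Lemma~\ref{lemma1}, which controls the one-step decrease in terms of the chosen index: conditioning on the first $k$ iterations and on the sampled subset $\tau_k$, the MR rule picks $i_k$ achieving $\max_{i\in\tau_k}|f_i(x_k)|^2$, so Lemma~\ref{lemma1} gives
\begin{align}
\left\|x_{k+1}-x_\star\right\|_2^2 \le \left\|x_k-x_\star\right\|_2^2 - (1-2\eta)\frac{\left|f_{i_k}(x_k)\right|^2}{\left\|\nabla f_{i_k}(x_k)\right\|_2^2},\notag
\end{align}
and then I would bound $\|\nabla f_{i_k}(x_k)\|_2^2 \le \|f'(x_k)\|_{2,\infty}^2$ in the denominator and $|f_{i_k}(x_k)|^2 = \|f_{\tau_k}(x_k)\|_\infty^2$ in the numerator, yielding a decrease of at least $(1-2\eta)\|f_{\tau_k}(x_k)\|_\infty^2/\|f'(x_k)\|_{2,\infty}^2$.

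Next I would take the expectation over the uniformly random subset $\tau_k$ of size $\beta$. Since each of the $\binom{m}{\beta}$ subsets is equally likely, $\mathbb{E}^k[\|f_{\tau_k}(x_k)\|_\infty^2] = \binom{m}{\beta}^{-1}\sum_{\tau_k\in\binom{[m]}{\beta}}\|f_{\tau_k}(x_k)\|_\infty^2$. The key device is to reintroduce $\xi_k$ from \eqref{Sec4.theorem1.1}: by definition $\sum_{\tau_k}\|f_{\tau_k}(x_k)\|_\infty^2 = \xi_k^{-1}\sum_{\tau_k}\|f_{\tau_k}(x_k)\|_2^2$, and a standard counting argument shows each coordinate $|f_j(x_k)|^2$ appears in exactly $\binom{m-1}{\beta-1}$ of the subsets, so $\sum_{\tau_k}\|f_{\tau_k}(x_k)\|_2^2 = \binom{m-1}{\beta-1}\|f(x_k)\|_2^2$. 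Combining, $\mathbb{E}^k[\|f_{\tau_k}(x_k)\|_\infty^2] = \frac{1}{\xi_k}\cdot\frac{\binom{m-1}{\beta-1}}{\binom{m}{\beta}}\|f(x_k)\|_2^2 = \frac{\beta}{\xi_k m}\|f(x_k)\|_2^2$.

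Then I would lower-bound $\|f(x_k)\|_2^2$ in terms of the distance to the solution. Writing $f(x_k) = f(x_k) - f(x_\star)$, apply Lemma~\ref{lemma3} with $\tau=[m]$, $x_1=x_k$, $x_2=x_\star$: inequality \eqref{Sec2.7} gives $\|f(x_k)\|_2^2 \ge \frac{1}{1+\eta^2}\|f'(x_k)(x_k-x_\star)\|_2^2 \ge \frac{1}{1+\eta^2} h_2^2(f'(x_k))\|x_k-x_\star\|_2^2$, using the definition of $h_2$ and that $f'(x_k)$ has full column rank. Chaining all the bounds and taking full expectation yields
\begin{align}
\mathbb{E}\left[\left\|x_{k+1}-x_\star\right\|_2^2\right] \le \left(1 - \frac{1-2\eta}{1+\eta^2}\frac{\beta}{\xi_k}\frac{h_2^2(f'(x_k))}{\|f'(x_k)\|_{2,\infty}^2 m}\right)\mathbb{E}\left[\left\|x_k-x_\star\right\|_2^2\right],\notag
\end{align}
which is \eqref{Sec4.theorem1}.

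The routine parts are the telescoping of inequalities and the combinatorial identity $\sum_{\tau}\|f_\tau\|_2^2 = \binom{m-1}{\beta-1}\|f\|_2^2$. The main obstacle, or at least the step requiring the most care, is handling the denominator correctly: Lemma~\ref{lemma1} carries the index-dependent factor $1-2\eta_{i_k}$ and the per-row $\|\nabla f_{i_k}(x_k)\|_2^2$, and I must replace these by the uniform quantities $1-2\eta$ and $\|f'(x_k)\|_{2,\infty}^2$ in a direction that preserves the inequality — replacing $\eta_{i_k}$ by $\eta$ shrinks $1-2\eta_{i_k}$ (valid since $1-2\eta_{i_k}\ge 1-2\eta>0$), and bounding $\|\nabla f_{i_k}(x_k)\|_2^2$ from above by $\|f'(x_k)\|_{2,\infty}^2$ shrinks the subtracted term; both are the correct direction. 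One should also note that, because the MR rule does not normalize by the gradient norm, the decomposition above uses $\|f_{\tau_k}\|_\infty$ (an $f$-only quantity) in the numerator while the denominator $\|f'(x_k)\|_{2,\infty}^2$ is pulled out as a global worst case, which is precisely why the MR-SNK bound carries $\|f'(x_k)\|_{2,\infty}^2$ rather than the $\|f'(x_k)\|_F^2$ appearing in Lemma~\ref{lemma01}; keeping this asymmetry straight is the subtle point of the argument.
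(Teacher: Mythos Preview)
Your proposal is correct and follows essentially the same route as the paper's proof: apply Lemma~\ref{lemma1}, identify $|f_{i_k}(x_k)|^2=\|f_{\tau_k}(x_k)\|_\infty^2$ via the MR rule, replace $\eta_{i_k}$ and $\|\nabla f_{i_k}(x_k)\|_2^2$ by the uniform bounds $\eta$ and $\|f'(x_k)\|_{2,\infty}^2$, average over $\tau_k$, convert $\sum_{\tau_k}\|f_{\tau_k}\|_\infty^2$ to $\xi_k^{-1}\sum_{\tau_k}\|f_{\tau_k}\|_2^2$ and use the counting identity (your $\binom{m-1}{\beta-1}$ is the paper's $\tfrac{\beta}{m}\binom{m}{\beta}$), then invoke \eqref{Sec2.7} and the definition of $h_2$. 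The only cosmetic difference is the order in which you bound the denominator versus taking the expectation; the paper carries $\|\nabla f_{i_k}(x_k)\|_2^2$ through the expectation before bounding it, but the effect is identical.
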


\begin{proof}
According to the MR-SNK method in \Cref{alg2} and 
\Cref{lemma1}, we know that
\begin{align}
\left\|x_{k+1}-x_{\star}\right\|^{2}_2
&\leq \left\|x_{k}-x_{\star}\right\|^{2}_2-\left(1-2 \eta_{i_k}\right)
\frac{\left|f_{i_k}(x_k)\right|^{2}}{\left\|\nabla f_{i_k}\left(x_{k}\right)\right\|_2^{2}} \notag
\\
&= \left\|x_{k}-x_{\star}\right\|^{2}_2 -\left(1-2 \eta_{i_k}\right)
\frac{\left\|f_{\tau_k}(x_k)\right\|^{2}_{\infty} }{\left\|\nabla f_{i_k}\left(x_{k}\right)\right\|_2^{2}}. \notag
\end{align}
Taking expectation of both sides conditioned on $x_{k}$ gives
\begin{align}
\mathbb{E}^{k}\left[\left\| {x}_{k+1}- {x}_{\star}\right\|^{2}_2\right]
&\leq \left\|x_{k}-x_{\star}\right\|^{2}_2 -\mathbb{E}^{k}\left[\left(1-2 \eta_{i_k}\right)
\frac{\left\|f_{\tau_k}(x_k)\right\|^{2}_{\infty} }{\left\|\nabla f_{i_k}\left(x_{k}\right)\right\|_2^{2}}\right] \notag
\\
&=\left\|x_{k}-x_{\star}\right\|^{2}_2 - \sum\limits_{\tau_k\in\binom{[m]}{\beta}}\frac{1}{\binom{m}{\beta}}\left(1-2 \eta_{i_k}\right)
\frac{\left\|f_{\tau_k}(x_k)\right\|^{2}_{\infty} }{\left\|\nabla f_{i_k}\left(x_{k}\right)\right\|_2^{2}}\notag
\\
&\leq \left\|x_{k}-x_{\star}\right\|^{2}_2 -\frac{1}{\binom{m}{\beta}}\left(1-2 \eta\right)\sum\limits_{\tau_k\in\binom{[m]}{\beta}}
\frac{\left\|f_{\tau_k}(x_k)\right\|^{2}_{\infty} }{\left\|\nabla f_{i_k}\left(x_{k}\right)\right\|_2^{2}} \notag
\\
&\leq\left\|x_{k}-x_{\star}\right\|^{2}_2 -\frac{1}{\binom{m}{\beta}}\frac{ 1-2 \eta }{\left\|f^{\prime}\left(x_{k}\right)\right\|_{2,\infty}^{2}} \sum\limits_{\tau_k\in\binom{[m]}{\beta}}
\left\|f_{\tau_k}(x_k)\right\|^{2}_{\infty},   \notag
\end{align}
which together with $\xi_k$ defined in (\ref{Sec4.theorem1.1}) gets
\begin{align}
\mathbb{E}^{k}\left[\left\| {x}_{k+1}- {x}_{\star}\right\|^{2}_2\right]
&\leq\left\|x_{k}-x_{\star}\right\|^{2}_2 -\frac{1}{\binom{m}{\beta}}\frac{ 1-2 \eta }{\left\|f^{\prime}\left(x_{k}\right)\right\|_{2,\infty}^{2}} \frac{1}{\xi_k} \sum\limits_{\tau_k\in\binom{[m]}{\beta}}
\left\|f_{\tau_k}(x_k)\right\|^{2}_{2}   \notag
\\
&=\left\|x_{k}-x_{\star}\right\|^{2}_2 -\frac{1}{\binom{m}{\beta}}\frac{ 1-2 \eta }{\left\|f^{\prime}\left(x_{k}\right)\right\|_{2,\infty}^{2}} \frac{1}{\xi_k}  \frac{\binom{m}{\beta}\beta}{m}\left\|f(x_k)\right\|^{2}_{2} \notag
\\
&=\left\|x_{k}-x_{\star}\right\|^{2}_2 - \frac{ 1-2 \eta }{\left\|f^{\prime}\left(x_{k}\right)\right\|_{2,\infty}^{2}} \frac{1}{\xi_k}  \frac{ \beta}{m}\left\|f(x_k)-f(x_{\star})\right\|^{2}_{2}. \notag
\end{align}
Further, considering (\ref{Sec2.7}) in \Cref{lemma3} and the definition of $h_2(\cdot)$ in \Cref{subsec:notation}, we get
\begin{align}
\mathbb{E}^{k}\left[\left\| {x}_{k+1}- {x}_{\star}\right\|^{2}_2\right]
&\leq\left\|x_{k}-x_{\star}\right\|^{2}_2 - \frac{ 1-2 \eta }{\left\|f^{\prime}\left(x_{k}\right)\right\|_{2,\infty}^{2}} \frac{1}{\xi_k}  \frac{ \beta}{m}\frac{1}{1+\eta^2}\left\|f^{\prime}\left(x_{k}\right)\left(x_{k}-x_{\star}\right)\right\|^{2}_{2}\notag
\\
&\leq\left\|x_{k}-x_{\star}\right\|^{2}_2 - \frac{ 1-2 \eta }{\left\|f^{\prime}\left(x_{k}\right)\right\|_{2,\infty}^{2}} \frac{1}{\xi_k}  \frac{ \beta}{m}\frac{1}{1+\eta^2} h_2^2\left( f^{\prime}\left(x_{k}\right)\right )\left\| x_{k}-x_{\star}\right\|^{2}_{2} \notag
\\
&=\left(1 - \frac{ 1-2 \eta }{1+\eta^2} \frac{\beta}{\xi_k}  \frac{ h_2^2\left( f^{\prime}\left(x_{k}\right)\right )}{\left\|f^{\prime}\left(x_{k}\right)\right\|_{2,\infty}^{2} m}  \right)\left\|x_{k}-x_{\star}\right\|^{2}_{2}. \notag
\end{align}
So, taking expectation on both sides and using the tower rule of expectation, we get the desired result (\ref{Sec4.theorem1}).

\end{proof}

\begin{remark}\label{remark_theorem_MR-SNK-1}
Noting that $1\leq\xi_k\leq\beta$ and $\beta h_2^2\left( f^{\prime}\left(x_{k}\right)\right ) \leq\left\|f^{\prime}\left(x_{k}\right)\right\|_{2,\infty}^{2} m$, it is easy to obtain that
\begin{align}
0<\frac{ 1-2 \eta }{1+\eta^2} \frac{1}{\xi_k}  \frac{\beta h_2^2\left( f^{\prime}\left(x_{k}\right)\right )}{\left\|f^{\prime}\left(x_{k}\right)\right\|_{2,\infty}^{2} m}  <1, \notag
\end{align}
which indicates that the convergence factor of the MR-SNK method, i.e.,
\begin{align}
\rho_{\text{MR-SNK}}=1-\frac{ 1-2 \eta }{1+\eta^2} \frac{\beta}{\xi_k}  \frac{ h_2^2\left( f^{\prime}\left(x_{k}\right)\right )}{\left\|f^{\prime}\left(x_{k}\right)\right\|_{2,\infty}^{2} m}, \notag
\end{align}
is smaller than 1. Thus, the iteration of the MR-SNK method is contracted and convergent in expectation.
\end{remark}

\begin{remark}\label{remark_theorem_MR-SNK-2}
According to \Cref{lemma01}, we know that
the convergence factors of the NRK and NURK methods are the same and
\begin{align}
\rho_{\text{NRK}}=\rho_{\text{NURK}}= 1-\frac{1-2 \eta}{(1+\eta)^{2}   }
 \frac{h_2^2(f^{\prime}(x_{k}))}{ \|f^{\prime}(x_{k}) \|_{F}^{2}m   }.\notag
 \end{align}
Since $\|f^{\prime}(x_{k}) \|_{F}^{2}\geq \left\|f^{\prime}\left(x_{k}\right)\right\|_{2,\infty}^{2}$, $\frac{\beta}{\xi_k}\geq 1$ and $0<\eta<\frac{1}{2}$, we have
 \begin{align}
 \frac{ 1-2 \eta }{1+\eta^2} \frac{\beta}{\xi_k}  \frac{ h_2^2\left( f^{\prime}\left(x_{k}\right)\right )}{\left\|f^{\prime}\left(x_{k}\right)\right\|_{2,\infty}^{2} m}
 >
  \frac{1-2 \eta}{(1+\eta)^{2}   }
 \frac{h_2^2(f^{\prime}(x_{k}))}{\|f^{\prime}(x_{k}) \|_{F}^{2} m },\notag
 \end{align}
 which implies that the convergence factor of the MR-SNK method is smaller than that of the NRK and NURK methods. That is,
 \begin{align}
\rho_{\text{MR-SNK}}<\rho_{\text{NRK}}=\rho_{\text{NURK}}.\notag
 \end{align}
\end{remark}

\begin{theorem}
\label{theorem:MD-SNK}
If the nonlinear function $f $ satisfies the local tangential cone condition given in \Cref{definition}, $\eta=\max\limits _{i}
\eta_{i}<\frac{1}{2}$, and $f(x_{\star})=0$, then the iterations of MD-SNK method, i.e., the second case in \Cref{alg2}, satisfy
\begin{align}
\mathbb{E}\left[\left\| {x}_{k+1}- {x}_{\star}\right\|^{2}_2\right]
 \leq
\left(1-  \frac{1-2\eta}{1+\eta^2}\frac{\beta}{\zeta_k} \frac{ h_2^2(G \left(x_k\right))}{m}\right)
 \mathbb{E}\left[\left\|x_{k}-{x}_{\star}\right\|^{2}_2\right],\label{Sec4.theorem2}
 \end{align}
  where
\begin{align}
\zeta_k=\frac{\sum  \limits _{\tau_k \in \binom{[m]}{\beta}} \left \|u_{\tau_k}(x_k)\right\|_{2}^{2}}{\sum  \limits _{\tau_k \in \binom{[m]}{\beta}}  \left\|u_{\tau_k}(x_k)\right\|_{\infty}^{2}}.\label{Sec4.theorem2.1}
\end{align}
\end{theorem}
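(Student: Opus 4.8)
The plan is to mirror the argument already used for \Cref{theorem:MR-SNK}, but with the residual vector $f(x_k)$ and \Cref{lemma3} replaced by the \emph{normalized} residual $u(x_k)$, the normalized Jacobian $G(x_k)$, and \Cref{lemma4}. The starting point is again \Cref{lemma1}. Since the MD rule (case 2 in \Cref{alg2}) picks $i_k={\rm arg}\max_{i\in\tau_k}|f_i(x_k)|^2/\|\nabla f_i(x_k)\|_2^2$, the quantity $|f_{i_k}(x_k)|^2/\|\nabla f_{i_k}(x_k)\|_2^2$ is exactly $\|u_{\tau_k}(x_k)\|_\infty^2$, so \Cref{lemma1} gives
\begin{align}
\left\|x_{k+1}-x_\star\right\|_2^2 \le \left\|x_k-x_\star\right\|_2^2 - \left(1-2\eta_{i_k}\right)\left\|u_{\tau_k}(x_k)\right\|_\infty^2. \notag
\end{align}

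Next I would take the conditional expectation $\mathbb{E}^{k}[\cdot]$ over the uniform choice of $\tau_k$ among the $\binom{m}{\beta}$ subsets of size $\beta$, bound $1-2\eta_{i_k}\ge 1-2\eta$, and convert the sum of squared $\ell_\infty$ norms into a sum of squared $\ell_2$ norms using the definition of $\zeta_k$ in (\ref{Sec4.theorem2.1}), obtaining
\begin{align}
\mathbb{E}^{k}\!\left[\left\|x_{k+1}-x_\star\right\|_2^2\right] \le \left\|x_k-x_\star\right\|_2^2 - \frac{1-2\eta}{\binom{m}{\beta}\,\zeta_k}\sum_{\tau_k\in\binom{[m]}{\beta}}\left\|u_{\tau_k}(x_k)\right\|_2^2. \notag
\end{align}
Then I would invoke the combinatorial identity $\sum_{\tau_k\in\binom{[m]}{\beta}}\|u_{\tau_k}(x_k)\|_2^2=\binom{m-1}{\beta-1}\|u(x_k)\|_2^2=\frac{\binom{m}{\beta}\beta}{m}\|u(x_k)\|_2^2$ (each coordinate lies in exactly $\binom{m-1}{\beta-1}$ of the size-$\beta$ subsets), which collapses the bound to $\|x_k-x_\star\|_2^2-\frac{(1-2\eta)\beta}{m\,\zeta_k}\|u(x_k)\|_2^2$.

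Finally I would apply (\ref{Sec2.7.1}) in \Cref{lemma4} together with the definition of $h_2(\cdot)$ to get $\|u(x_k)\|_2^2\ge\frac{1}{1+\eta^2}\|G(x_k)(x_k-x_\star)\|_2^2\ge\frac{h_2^2(G(x_k))}{1+\eta^2}\|x_k-x_\star\|_2^2$, substitute, and take the full expectation via the tower rule to reach (\ref{Sec4.theorem2}). The only step that is genuinely new relative to the MR-SNK proof is this switch to the normalized objects: one must notice that \Cref{lemma4}, the $u$/$G$ analogue of \Cref{lemma3} which already builds in $f(x_\star)=0$, produces exactly the factor $h_2^2(G(x_k))/m$ in place of $h_2^2(f'(x_k))/(\|f'(x_k)\|_{2,\infty}^2 m)$, and that here no division by $\|\nabla f_{i_k}(x_k)\|_2^2$ is needed because $u$ is pre-normalized. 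I expect no real obstacle; the points to be careful about are that $\|u_{\tau_k}(x_k)\|_\infty^2$ is precisely the decrement produced by the MD rule, and that if $h_2(G(x_k))=0$ the estimate (\ref{Sec4.theorem2}) holds trivially, so no full-column-rank hypothesis on $G(x_k)$ is required.
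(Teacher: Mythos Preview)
Your proposal is correct and follows essentially the same argument as the paper's own proof: start from \Cref{lemma1}, recognize the MD decrement as $\|u_{\tau_k}(x_k)\|_\infty^2$, take conditional expectation with $1-2\eta_{i_k}\ge 1-2\eta$, use $\zeta_k$ and the combinatorial identity to reach $\frac{(1-2\eta)\beta}{m\zeta_k}\|u(x_k)\|_2^2$, apply (\ref{Sec2.7.1}) and $h_2$, then the tower rule. Your added remarks (the explicit $\binom{m-1}{\beta-1}$ count and the trivial case $h_2(G(x_k))=0$) are minor clarifications but the route is identical.
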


\begin{proof}
According to 
\Cref{lemma1}, the definition of $u(x)$ in \Cref{subsec:notation} and the MD-SNK method in \Cref{alg2}, we have
\begin{align}
\left\|x_{k+1}-x_{\star}\right\|^{2}_2
&\leq \left\|x_{k}-x_{\star}\right\|^{2}_2-\left(1-2 \eta_{i_k}\right)
\frac{\left|f_{i_k}(x_k)\right|^{2}}{\left\|\nabla f_{i_k}\left(x_{k}\right)\right\|_2^{2}} \notag
\\
&= \left\|x_{k}-x_{\star}\right\|^{2}_2 -\left(1-2 \eta_{i_k}\right) \left|u_{i_k}(x_k)\right|^{2}  \notag
\\
&= \left\|x_{k}-x_{\star}\right\|^{2}_2 -\left(1-2 \eta_{i_k}\right) \left\|u_{\tau_k}(x_k)\right\|^{2}_\infty . \notag
\end{align}
Now, taking expectation of both sides conditioned on $x_{k}$ gives
\begin{align}
\mathbb{E}^{k}\left[\left\| {x}_{k+1}- {x}_{\star}\right\|^{2}_2\right]
&\leq \left\|x_{k}-x_{\star}\right\|^{2}_2 -\mathbb{E}^{k}\left[\left(1-2 \eta_{i_k}\right)
 \left\|u_{\tau_k}(x_k)\right\|^{2}_\infty\right] \notag
\\
&=\left\|x_{k}-x_{\star}\right\|^{2}_2 - \sum\limits_{\tau_k\in\binom{[m]}{\beta}}\frac{1}{\binom{m}{\beta}}\left(1-2 \eta_{i_k}\right)
\left\|u_{\tau_k}(x_k)\right\|^{2}_\infty\notag
\\
&\leq \left\|x_{k}-x_{\star}\right\|^{2}_2 -\frac{1}{\binom{m}{\beta}}\left(1-2 \eta\right)\sum\limits_{\tau_k\in\binom{[m]}{\beta}}
\left\|u_{\tau_k}(x_k)\right\|^{2}_\infty ,   \notag
\end{align}
which together with the definition of $\zeta_k$ in (\ref{Sec4.theorem2.1}) leads to
\begin{align}
\mathbb{E}^{k}\left[\left\| {x}_{k+1}- {x}_{\star}\right\|^{2}_2\right]
&\leq
\left\|x_{k}-x_{\star}\right\|^{2}_2 -\frac{1}{\binom{m}{\beta}}\left(1-2 \eta\right)\frac{1}{\zeta_k} \sum\limits_{\tau_k\in\binom{[m]}{\beta}}
\left\|u_{\tau_k}(x_k)\right\|^{2}_2   \notag
\\
&=
\left\|x_{k}-x_{\star}\right\|^{2}_2 -\frac{1}{\binom{m}{\beta}}\left(1-2 \eta\right)\frac{1}{\zeta_k} \frac{\binom{m}{\beta}\beta}{m}
\left\|u(x_k)\right\|^{2}_2   \notag
\\
&=
\left\|x_{k}-x_{\star}\right\|^{2}_2 - \left(1-2 \eta\right)\frac{\beta}{\zeta_k} \frac{ 1}{m}
\left\|u(x_k)\right\|^{2}_2  . \notag
\end{align}
Further, by making use of (\ref{Sec2.7.1}) in \Cref{lemma4} and the definition of $h_2(\cdot)$ in \Cref{subsec:notation}, we can obtain
\begin{align}
\mathbb{E}^{k}\left[\left\| {x}_{k+1}- {x}_{\star}\right\|^{2}_2\right]
&\leq
\left\|x_{k}-x_{\star}\right\|^{2}_2 - \left(1-2 \eta\right)\frac{\beta}{\zeta_k} \frac{ 1}{m}
\frac{1}{1+\eta^2}\left\|G \left(x_k\right)\left(x_k-x_{\star}\right)\right\|^2_2  \notag
\\
&\leq
\left(1-  \frac{1-2\eta}{1+\eta^2}\frac{\beta}{\zeta_k} \frac{ h_2^2(G \left(x_k\right))}{m}\right)
\left\|\left(x_k-x_{\star}\right)\right\|^2_2 . \notag
\end{align}
Finally, the desired result (\ref{Sec4.theorem2}) can be obtained by taking the full expectation on both sides.

\end{proof}

\begin{remark}\label{remark_theorem_MD-SNK-1}
Considering that $1\leq\zeta_k\leq\beta$ and $\beta h_2^2\left( G\left(x_{k}\right)\right ) \leq m$, we get
\begin{align}
0<\frac{ 1-2 \eta }{1+\eta^2} \frac{1}{\zeta_k}  \frac{\beta h_2^2\left(G\left(x_{k}\right)\right )}{  m}  <1, \notag
\end{align}
which indicates that the convergence factor of the MD-SNK method, i.e.,
\begin{align}
\rho_{\text{MD-SNK}}= 1-  \frac{1-2\eta}{1+\eta^2}\frac{\beta}{\zeta_k} \frac{ h_2^2(G \left(x_k\right))}{m}, \notag
\end{align}
is smaller than 1.
\end{remark}

\begin{remark}\label{remark_theorem_MD-SNK-2}
Letting $D$ be an $m$-by-$m$ diagonal matrix in the following form 
 $$
 D=\begin{bmatrix}
 \|\nabla f_1(x_k)\|_2&&\\
 &\ddots&\\
 && \|\nabla f_m(x_k)\|_2
 \end{bmatrix},
 $$
then we get a relation between $f^{\prime}(x_k)$ and $G(x_k)$, that is,
\begin{align}
f^{\prime}(x_k)=DG(x_k).\label{remark_theorem_MD-SNK-2:1}
 \end{align}
Now, by setting $A=G(x_k)G(x_k)^T$ and $B=D^TD$, since both matrices $A$ and $B$ are symmetric positive-semidefinite, we have
\begin{align}
\lambda_1(B)I\geq B\geq \lambda_m(B)I\notag
 \end{align}
and then obtan
\begin{align}
\lambda_1(B)A=A^{\frac{1}{2}}(\lambda_1(B)I-B)A^{\frac{1}{2}}+A^{\frac{1}{2}}BA^{\frac{1}{2}}\geq A^{\frac{1}{2}}BA^{\frac{1}{2}}.\notag
 \end{align}
Further, by applying \Cref{lemma2}, we get
\begin{align}
\lambda_m\left(\lambda_1(B)A\right)
&=\lambda_m\left(A^{\frac{1}{2}}(\lambda_1(B)I-B)A^{\frac{1}{2}}+A^{\frac{1}{2}}BA^{\frac{1}{2}}\right)\notag
\\
&\geq\lambda_m\left(A^{\frac{1}{2}}(\lambda_1(B)I-B)A^{\frac{1}{2}} \right)+\lambda_m\left(A^{\frac{1}{2}}BA^{\frac{1}{2}}\right)\notag
\\
&\geq \lambda_m\left(A^{\frac{1}{2}}BA^{\frac{1}{2}}\right) \notag\\
&=\lambda_m\left(AB\right),\notag
 \end{align}
which implies
\begin{align}
\lambda_1(B)\lambda_m\left(A\right)
\geq  \lambda_m\left(AB\right).\notag
 \end{align}
 That is,
 \begin{align}
\lambda_1(D^TD)\lambda_m\left(G(x_k)G(x_k)^T\right)
\geq \lambda_m\left(G(x_k)G(x_k)^TD^TD\right)=\lambda_m\left(DG(x_k)G(x_k)^TD^T\right),\notag
 \end{align}
 which together with the definitions of $D$, $G(x_k)$ and $f^{\prime}(x_k)=DG(x_k)$ in (\ref{remark_theorem_MD-SNK-2:1}) indicates
  \begin{align}
\left\|f^{\prime}\left(x_{k}\right)\right\|_{2,\infty}^{2} h_2^2(G \left(x_k\right))
\geq h_2^2\left( f^{\prime}\left(x_{k}\right)\right ).\label{remark_theorem_MD-SNK-2:2}
 \end{align}
Meanwhile, from \Cref{theorem:MR-SNK,theorem:MD-SNK}, we know that the convergence factors of the MR-SNK and MD-SNK methods are
$$
\rho_{\text{MR-SNK}}\sim 1 - \frac{ 1-2 \eta }{1+\eta^2}   \frac{ h_2^2\left( f^{\prime}\left(x_{k}\right)\right )}{\left\|f^{\prime}\left(x_{k}\right)\right\|_{2,\infty}^{2} m},
\quad
\text{and}
\quad
\rho_{\text{MD-SNK}}\sim 1-  \frac{1-2\eta}{1+\eta^2}  \frac{ h_2^2(G \left(x_k\right))}{m},
$$
respectively.

Thus, according to (\ref{remark_theorem_MD-SNK-2:2}), we conclude that
\begin{align}
  1-  \frac{1-2\eta}{1+\eta^2}  \frac{ h_2^2(G \left(x_k\right))}{m}
\leq
   1 - \frac{ 1-2 \eta }{1+\eta^2}   \frac{ h_2^2\left( f^{\prime}\left(x_{k}\right)\right )}{\left\|f^{\prime}\left(x_{k}\right)\right\|_{2,\infty}^{2} m}, \label{remark_theorem_MD-SNK-2:3}
\end{align}
which means that the convergence factor of the MD-SNK method is a smaller than that of the MR-SNK method.
\end{remark}

\begin{remark}\label{remark_theorem_MD-SNK-3}
From \Cref{remark_MR-SNK}, we know that the MR-SNK and MD-SNK methods can recover the NURK method by setting $\beta=1$. It is convenient to obtain the convergence result of the NURK method by using \Cref{theorem:MR-SNK} or \Cref{theorem:MD-SNK}. Specifically, substituting $\beta=\xi_k=\zeta_k=1$ into the convergence bounds (\ref{Sec4.theorem1}) and (\ref{Sec4.theorem2}) and combining with (\ref{remark_theorem_MD-SNK-2:3}) yields
\begin{align}
\mathbb{E}\left[\left\| {x}_{k+1}- {x}_{\star}\right\|^{2}_2\right]
&\leq\left(1-  \frac{1-2\eta}{1+\eta^2}  \frac{ h_2^2(G \left(x_k\right))}{m} \right)
 \mathbb{E}\left[\left\|x_{k}-{x}_{\star}\right\|^{2}_2\right] \quad(by~ \Cref{theorem:MD-SNK})   \notag
 \\
 &\leq\left(1 - \frac{ 1-2 \eta }{1+\eta^2} \frac{ h_2^2\left( f^{\prime}\left(x_{k}\right)\right )}{\left\|f^{\prime}\left(x_{k}\right)\right\|_{2,\infty}^{2} m}  \right)
 \mathbb{E}\left[\left\|x_{k}-{x}_{\star}\right\|^{2}_2\right] \quad(by~ \Cref{theorem:MR-SNK})   \notag
 \\
 &<\left(1 - \frac{ 1-2 \eta }{(1+\eta)^2} \frac{ h_2^2\left( f^{\prime}\left(x_{k}\right)\right )}{\left\|f^{\prime}\left(x_{k}\right)\right\|_{F}^{2} m}  \right)
 \mathbb{E}\left[\left\|x_{k}-{x}_{\star}\right\|^{2}_2\right] \quad(by~Theorem~3.1~ in~\text{\cite{wang2022nonlinear}}),   \notag
 \end{align}
 which indicates that our results are all tighter than the existing error estimate for the NURK method.
\end{remark}

\begin{remark}\label{remark_theorem_MD-SNK-4}
\Cref{theorem:MR-SNK,theorem:MD-SNK} show that the progress made by an iteration of the MR-SNK and MD-SNK methods depends on the value of $\xi_k$ and $\zeta_k$, respectively. Since $1\leq\xi_k \leq \beta$, we can conclude that the closer $\xi_k$ approaches 1, the smaller the convergence factor of the MR-SNK method is, and we call it the ``best case'' when $\xi_k =1$. Conversely, when $\xi_k$ is closer to $\beta$,  the convergence factor of MR-SNK method is bigger, and we call it the ``worst case'' when $\xi_k =\beta$. Similarly, we can get other convergence results in the best and worst cases, which are summarised in \Cref{Sec4:tab1}. Moreover, we summarize the relationships of these best case convergence factors in \Cref{Sec4:tab2}. The inequality sign $(\leq)$ denotes that the best case convergence factor of the method listed in the row is smaller or equal to the method listed in the column, and ``P'' indicates that the smaller convergence factor is problem-dependent.
\begin{table}[]
\centering
   \fontsize{8}{8}\selectfont
       \caption{The best and worst case convergence factors of the MR-SNK, MR-NK, NURK-MR (result of the NURK method from \Cref{theorem:MR-SNK}), MD-SNK, MD-NK, NURK-MD (result of the NURK method from \Cref{theorem:MD-SNK}) methods.}
    \label{Sec4:tab1}
    \begin{tabular}{ c c c   }
 \hline
            Method  &     Worst case                 &  Best case                    \cr \hline
            MR-SNK   &   $1 - \frac{ 1-2 \eta }{1+\eta^2}   \frac{ h_2^2\left( f^{\prime}\left(x_{k}\right)\right )}{\left\|f^{\prime}\left(x_{k}\right)\right\|_{2,\infty}^{2} m} $  &   $1 - \frac{ 1-2 \eta }{1+\eta^2}   \frac{\beta h_2^2\left( f^{\prime}\left(x_{k}\right)\right )}{\left\|f^{\prime}\left(x_{k}\right)\right\|_{2,\infty}^{2} m} $                               \cr
            MR-NK   &  $ 1 - \frac{ 1-2 \eta }{1+\eta^2} \frac{ h_2^2\left( f^{\prime}\left(x_{k}\right)\right )}{\left\|f^{\prime}\left(x_{k}\right)\right\|_{2,\infty}^{2} m }                     $       &  $ 1 - \frac{ 1-2 \eta }{1+\eta^2} \frac{ h_2^2\left( f^{\prime}\left(x_{k}\right)\right )}{\left\|f^{\prime}\left(x_{k}\right)\right\|_{2,\infty}^{2}  }  $   \cr
            NURK-MR &   $ 1 - \frac{ 1-2 \eta }{1+\eta^2}  \frac{ h_2^2\left( f^{\prime}\left(x_{k}\right)\right )}{\left\|f^{\prime}\left(x_{k}\right)\right\|_{2,\infty}^{2} m}          $                 &  $1 - \frac{ 1-2 \eta }{1+\eta^2} \frac{ h_2^2\left( f^{\prime}\left(x_{k}\right)\right )}{\left\|f^{\prime}\left(x_{k}\right)\right\|_{2,\infty}^{2} m} $  \cr
            MD-SNK  &   $1-  \frac{1-2\eta}{1+\eta^2} \frac{ h_2^2(G \left(x_k\right))}{m}$                          & $1-  \frac{1-2\eta}{1+\eta^2}  \frac{\beta h_2^2(G \left(x_k\right))}{m}$  \cr
            MD-NK   &               $1-  \frac{1-2\eta}{1+\eta^2}\frac{h_2^2(G \left(x_k\right))}{m}    $              &  $1-  \frac{1-2\eta}{1+\eta^2}    h_2^2(G \left(x_k\right)) $      \cr
            NURK-MD &    $1-  \frac{1-2\eta}{1+\eta^2}  \frac{ h_2^2(G \left(x_k\right))}{m}$                         &   $1-  \frac{1-2\eta}{1+\eta^2}  \frac{ h_2^2(G \left(x_k\right))}{m}$     \cr\hline
\end{tabular}
\end{table}
\begin{table}[]
\centering
   \fontsize{8}{8}\selectfont
       \caption{Comparison of the best case convergence factors listed in \Cref{Sec4:tab1}.}
    \label{Sec4:tab2}
    \begin{tabular}{ c| c c c c c c  }
 \hline
             &   MD-NK  &  MR-NK & MD-SNK & MR-SNK & NURK-MD & NURK-MR                    \cr \hline
   MD-NK             &  $=$    &  $\leq$  & $\leq$ & $\leq$  &$\leq$  & $\leq$            \cr
   MR-NK             &         &   $=$    &  P     & $\leq$  & P      &$\leq$             \cr
   MD-SNK            &         &          &$=$     & $\leq$  & $\leq$ & $\leq$            \cr
   MR-SNK            &         &          &        & $=$     &  P     & $\leq$            \cr
   NURK-MD           &         &          &        &         & $=$    &  $\leq$            \cr
   NURK-MR           &         &          &        &         &        & $=$                \cr\hline
\end{tabular}
\end{table}
\end{remark}

\section{Multiple samples-based greedy randomized sampling methods}
\label{sec: mutiple-methods}
In this section, on the basis of the MR/MD-SNK method, we further consider block version nonlinear iteration and propose block sampling nonlinear Kaczmarz (BSNK) method for solving the nonlinear problems (\ref{Sec11}). 
Rather than producing the next iteration to satisfy the single sampled equation as in the MR/MD-SNK method, block iteration satisfies all the equations in the sampled multiple samples. Thus, given  an index subset of samples $\tau\subseteq [m]$, the projection step of the BSNK method is obtained by
\begin{align}
x_{k+1}=\mathop{\text{argmin}}\limits_{x\in \mathbb{R}^{n}}\|x-x_k\|_2^2,\quad s.t. \quad f_{\tau}(x_k)+  f^{\prime}_{\tau}(x_k) (x-x_k)=0.\notag
\end{align}
Thus, the solution of the projection is
\begin{align}
x_{k+1}=x_{k}-(f_{\tau}^{\prime}(x_k))^{\dagger}f_{\tau}(x_k).  \label{Sec5-1}
\end{align}

From the update formula (\ref{Sec5-1}) of the block method, we know that the choice of the index subset $\tau$ greatly affects the performance of the algorithm. So, inspired by the work of Zhang and Li \cite{zhang2021block}, we construct two algorithmic frameworks for determining the index subset, which combining with the greedy rules presented in \Cref{subsec: rules} lead to four greedy BSNK methods shown in \Cref{alg-The (MR/MD)-BSNK1 method,alg-The (MR/MD)-BSNK2 method} for the nonlinear problems (\ref{Sec11}). \Cref{alg-The (MR/MD)-BSNK1 method,alg-The (MR/MD)-BSNK2 method} indicate that their iterations make faster progress than that of \Cref{alg2}. The main reason is that the iteration index $i_k$ in \Cref{alg2} belongs to the index set used in \Cref{alg-The (MR/MD)-BSNK1 method,alg-The (MR/MD)-BSNK2 method}. In addition, if $\beta=m$ in \Cref{alg-The (MR/MD)-BSNK1 method} and $\nu=1$ in \Cref{alg-The (MR/MD)-BSNK2 method}, 
then $|\mathcal{I}_{k}|=|\mathcal{J}_{k}|=1$ and thus the (MR/MD)-BSNK1 and (MR/MD)-BSNK2 methods will recover the (MR/MD)-NK method.

\begin{algorithm}
\caption{The (MR/MD)-BSNK1 method }
\label{alg-The (MR/MD)-BSNK1 method}
\begin{algorithmic}[1]
\STATE{Input: The initial estimate $x_0\in \mathbb{R}^n$, parameter $\beta \in [m]$. }
\FOR{$k=0, 1, 2, \cdots $ until convergence,}
\STATE{Choose an index subset $\tau_k$ of size $\beta$ uniformly at random from among $[ m ]$.}
\STATE{Switch the greedy sampling strategy to determine two different iterative methods, i.e., the MR-BSNK1 and MD-BSNK1 methods.}
\STATE{\quad $\triangleright$ case 1:  maximum residual (MR) rule
\\
\quad \quad $i_k={\rm arg} \max \limits _{i\in \tau_k} \left|f_{i}(x_k)\right|^2  $ and $\delta_k=\max \limits _{i\in \tau_k}  \left|f_{i}(x_k)\right|^2$.
\\
\quad \quad Determine the index set
$  \mathcal{I}_{k}=\{h_k|\left|f_{h_k}(x_k)\right|^2\geq \delta_k;~h_k\in[m]\backslash\tau_k\}\cup\{i_k\}$.
}
\STATE{\quad $\triangleright$ case 2: maximum distance (MD) rule
\\
\quad\quad $i_k={\rm arg} \max \limits _{i\in \tau_k} \frac{\left|f_{i}(x_k)\right|^{2}}{\left\|\nabla f_{i}\left(x_{k}\right)\right\|_2^{2}}$ and $\delta_k=\max \limits _{i\in \tau_k}  \frac{\left|f_{i}(x_k)\right|^{2}}{\left\|\nabla f_{i}\left(x_{k}\right)\right\|_2^{2}} $.
\\
\quad\quad Determine the index set
$  \mathcal{I}_{k}=\{h_k |  \frac{\left|f_{h_k}(x_k)\right|^{2}}{\left\|\nabla f_{h_k}\left(x_{k}\right)\right\|_2^{2}} \geq \delta_k;~h_k\in[ m]\backslash\tau_k\}\cup\{i_k\}$.}
\STATE{Update $x_{k+1}=x_{k}-(f_{ \mathcal{I}_{k}}^{\prime}(x))^{\dagger}f_{ \mathcal{I}_{k}}(x_k)$. }
\ENDFOR
\end{algorithmic}
\end{algorithm}

\begin{algorithm}
\caption{The (MR/MD)-BSNK2 method}
\label{alg-The (MR/MD)-BSNK2 method}
\begin{algorithmic}[1]
\STATE{Input: The initial estimate $x_0\in \mathbb{R}^n$, parameter $\nu\in[m]$, parameters $\beta_1, \beta_2, \cdots, \beta_{\nu}$ s.t. $\sum \limits _{j=1}^{\nu}\beta_{j}=m$. }
\FOR{$j=1: \nu$ }
\STATE{Choose an index subset $\tau_j$ of size $\beta_j$ uniformly at random from among the remaining indices of $[ m ]$ without replacement.  }
\STATE{Switch the greedy sampling strategy to determine two different iterative methods, i.e., the MR-BSNK2 and MD-BSNK2 methods.}
\STATE{\quad $\triangleright$ case 1:  maximum residual (MR) rule
\\
\quad\quad $t_j={\rm arg} \max \limits _{i\in \tau_j} \left|f_{i}(x_k)\right|^2 . $}
\STATE{\quad $\triangleright$ case 2: maximum distance (MD) rule
\\
 \quad\quad $t_j={\rm arg} \max \limits _{i\in \tau_j} \frac{\left|f_{i}(x_k)\right|^{2}}{\left\|\nabla f_{i}\left(x_{k}\right)\right\|_2^{2}} .$ }
 \ENDFOR
\STATE{Determine the index set
$
  \mathcal{J}_{k}=\{t_1, t_2, \cdots, t_{\nu}\}
$.}
\STATE{Update $x_{k+1}=x_{k}-(f_{ \mathcal{J}_{k}}^{\prime}(x))^{\dagger}f_{ \mathcal{J}_{k}}(x_k)$. }

\end{algorithmic}
\end{algorithm}

\section{Convergence analysis}
\label{sec:theory-mutiple}
We now present the convergence results for the four block methods proposed in \Cref{sec: mutiple-methods}, i.e., the MR-BSNK1, MD-BSNK1, MR-BSNK2 and MD-BSNK2 methods.
\begin{lemma}  \label{sec:theory-mutiple-lemma1}
If the nonlinear function $f $ satisfies the local tangential cone condition given in \Cref{definition} and a vector $x_\star\in \mathbb{R}^{n} $ satisfies $f(x_\star)=0$, then from the iteration $x_{k+1}=x_{k}-(f_{\tau_k}^{\prime}(x_k))^{\dagger}f_{\tau_k}(x_k)$ with $\tau_k\subseteq[m]$, we have
\begin{align}
\left\|x_{k+1}-x_{\star}\right\|^{2}_2
\leq
\left\|x_{k}-x_{\star}\right\|^{2}_2-  \left(h_2^2((f_{\tau_k}^{\prime}(x_k))^{\dagger})-2\eta\sigma^2_{\max}\left((f_{\tau_k}^{\prime}(x_k))^{\dagger}\right)\right)\left\|  f_{\tau_k}(x_k)\right\|^{2}_2. \notag
\end{align}
\end{lemma}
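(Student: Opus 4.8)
The plan is to expand the squared error directly from the block update and isolate a single cross term that the tangential cone condition controls. Abbreviate $J:=f'_{\tau_k}(x_k)$, $r:=f_{\tau_k}(x_k)$ and $e_k:=x_k-x_\star$, so the iteration reads $e_{k+1}=e_k-J^{\dagger}r$ and
$$\|e_{k+1}\|_2^2=\|e_k\|_2^2-2\langle e_k,J^{\dagger}r\rangle+\|J^{\dagger}r\|_2^2 .$$
Everything then hinges on rewriting and bounding $\langle e_k,J^{\dagger}r\rangle$.

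First I would use the Moore--Penrose identities $J^{\dagger}JJ^{\dagger}=J^{\dagger}$ and $(J^{\dagger}J)^{T}=J^{\dagger}J$ to observe that $\langle e_k,J^{\dagger}r\rangle=\langle e_k,J^{\dagger}JJ^{\dagger}r\rangle=\langle (J^{\dagger}J)e_k,J^{\dagger}r\rangle=\langle J^{\dagger}(Je_k),J^{\dagger}r\rangle$; in other words the component of $e_k$ lying in the null space of $J$ is annihilated by $J^{\dagger}$ and drops out. Next, since $f(x_\star)=0$, \Cref{lemma3} applied with $x_1=x_k$, $x_2=x_\star$ tells us that $w:=r-Je_k=f_{\tau_k}(x_k)-f_{\tau_k}(x_\star)-f'_{\tau_k}(x_k)(x_k-x_\star)$ satisfies $\|w\|_2\le\eta\|r\|_2$ by \eqref{Sec2.6}. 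Substituting $Je_k=r-w$ then gives $\langle e_k,J^{\dagger}r\rangle=\|J^{\dagger}r\|_2^2-\langle J^{\dagger}w,J^{\dagger}r\rangle$, and feeding this back into the expansion simplifies the right-hand side to
$$\|e_{k+1}\|_2^2=\|e_k\|_2^2-\|J^{\dagger}r\|_2^2+2\langle J^{\dagger}w,J^{\dagger}r\rangle .$$

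It then remains only to estimate the two surviving terms by multiples of $\|r\|_2^2$. For the negative term I would invoke the definition of $h_2(\cdot)$ from \Cref{subsec:notation} to get $\|J^{\dagger}r\|_2^2\ge h_2^2(J^{\dagger})\|r\|_2^2$, and for the last term Cauchy--Schwarz together with $\|J^{\dagger}w\|_2\le\sigma_{\max}(J^{\dagger})\|w\|_2\le\eta\,\sigma_{\max}(J^{\dagger})\|r\|_2$ and $\|J^{\dagger}r\|_2\le\sigma_{\max}(J^{\dagger})\|r\|_2$ yields $2\langle J^{\dagger}w,J^{\dagger}r\rangle\le 2\eta\,\sigma_{\max}^{2}(J^{\dagger})\|r\|_2^2$. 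Combining these two estimates produces exactly the asserted inequality. The only slightly delicate point is the reduction $\langle e_k,J^{\dagger}r\rangle=\langle J^{\dagger}(Je_k),J^{\dagger}r\rangle$ — i.e. being sure that the null-space part of $e_k$ contributes nothing so that the tangential cone bound on $r-Je_k$ is applicable; once that is in place, the remaining two elementary norm inequalities finish the argument without further effort.
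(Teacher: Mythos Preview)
Your argument is correct and essentially the same as the paper's. The paper reaches the identical intermediate identity $\|e_{k+1}\|_2^2=\|e_k\|_2^2-\|J^{\dagger}r\|_2^2+2\langle J^{\dagger}w,J^{\dagger}r\rangle$ (written there as $2r^{T}(JJ^{T})^{\dagger}w$) via the factorisation $J^{\dagger}=J^{T}(JJ^{T})^{\dagger}$ rather than the projector identity $J^{\dagger}JJ^{\dagger}=J^{\dagger}$ that you use, and then bounds the two remaining terms by $h_2^2(J^{\dagger})\|r\|_2^2$ and $2\eta\,\sigma_{\max}^{2}(J^{\dagger})\|r\|_2^2$ exactly as you do.
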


\begin{proof}
From the iteration formula $x_{k+1}=x_{k}-(f_{\tau_k}^{\prime}(x_k))^{\dagger}f_{\tau_k}(x_k)$, we have
\begin{align}
\left\|x_{k+1}-x_{\star}\right\|^{2}_2
&=
 \left\|x_{k}-(f_{\tau_k}^{\prime}(x_k))^{\dagger}f_{\tau_k}(x_k)-x_{\star}\right\|^{2}_2  \notag
\\
&= \left\|x_{k}-x_{\star}\right\|^{2}_2 + \left\| (f_{\tau_k}^{\prime}(x_k))^{\dagger}f_{\tau_k}(x_k)\right\|^{2}_2 - 2<(f_{\tau_k}^{\prime}(x_k))^{\dagger}f_{\tau_k}(x_k), x_{k}-x_{\star}>, \notag
\end{align}
which together with the fact that $(f_{\tau_k}^{\prime}(x_k))^{\dagger}=(f_{\tau_k}^{\prime}(x_k))^T(f_{\tau_k}^{\prime}(x_k)(f_{\tau_k}^{\prime}(x_k))^T)^{\dagger}$ leads to
\begin{align}
\left\|x_{k+1}-x_{\star}\right\|^{2}_2
&=
\left\|x_{k}-x_{\star}\right\|^{2}_2 + \left\| (f_{\tau_k}^{\prime}(x_k))^{\dagger}f_{\tau_k}(x_k)\right\|^{2}_2 - 2 f^T_{\tau_k}(x_k)(f_{\tau_k}^{\prime}(x_k)(f_{\tau_k}^{\prime}(x_k))^T)^{\dagger}f_{\tau_k}^{\prime}(x_k)( x_{k}-x_{\star}) \notag
\\
&=
\left\|x_{k}-x_{\star}\right\|^{2}_2+ \left\| (f_{\tau_k}^{\prime}(x_k))^{\dagger}f_{\tau_k}(x_k)\right\|^{2}_2 \notag
\\
& \quad + 2f^T_{\tau_k}(x_k)(f_{\tau_k}^{\prime}(x_k)(f_{\tau_k}^{\prime}(x_k))^T)^{\dagger}\left(f_{\tau_k}(x_k)-
f_{\tau_k}(x_{\star})-f_{\tau_k}^{\prime}(x_k)( x_{k}-x_{\star})\right) \notag
\\
& \quad -2f^T_{\tau_k}(x_k)(f_{\tau_k}^{\prime}(x_k)(f_{\tau_k}^{\prime}(x_k))^T)^{\dagger} f_{\tau_k}(x_k). \notag
\end{align}
Since
\begin{align}\left\| (f_{\tau_k}^{\prime}(x_k))^{\dagger}f_{\tau_k}(x_k)\right\|^{2}_2
&=f^T_{\tau_k}(x_k) (f_{\tau_k}^{\prime}(x_k))^{\dagger})^Tf_{\tau_k}^{\prime}(x_k))^{\dagger}f_{\tau_k}(x_k)\notag
\\
&=f^T_{\tau_k}(x_k) (f_{\tau_k}^{\prime}(x_k))^T)^{\dagger}f_{\tau_k}^{\prime}(x_k))^{\dagger}f_{\tau_k}(x_k)\notag
\\
&=
f^T_{\tau_k}(x_k)(f_{\tau_k}^{\prime}(x_k)(f_{\tau_k}^{\prime}(x_k))^T)^{\dagger} f_{\tau_k}(x_k),\notag
\end{align}
we get
\begin{align}
\left\|x_{k+1}-x_{\star}\right\|^{2}_2
&=
\left\|x_{k}-x_{\star}\right\|^{2}_2- \left\| (f_{\tau_k}^{\prime}(x_k))^{\dagger}f_{\tau_k}(x_k)\right\|^{2}_2 \notag
\\
& \quad + 2f^T_{\tau_k}(x_k)(f_{\tau_k}^{\prime}(x_k)(f_{\tau_k}^{\prime}(x_k))^T)^{\dagger}\left(f_{\tau_k}(x_k)-
f_{\tau_k}(x_{\star})-f_{\tau_k}^{\prime}(x_k)( x_{k}-x_{\star})\right). \notag
\end{align}
Furthermore, using Cauchy-Schwarz inequality, it is true that
\begin{align}
\left\|x_{k+1}-x_{\star}\right\|^{2}_2
&\leq
\left\|x_{k}-x_{\star}\right\|^{2}_2- \left\| (f_{\tau_k}^{\prime}(x_k))^{\dagger}f_{\tau_k}(x_k)\right\|^{2}_2 \notag
\\
& \quad + 2\|f^T_{\tau_k}(x_k)(f_{\tau_k}^{\prime}(x_k)(f_{\tau_k}^{\prime}(x_k))^T)^{\dagger}\|_2\| f_{\tau_k}(x_k)-
f_{\tau_k}(x_{\star})-f_{\tau_k}^{\prime}(x_k)( x_{k}-x_{\star}) \|_2\notag
\\
&\leq
\left\|x_{k}-x_{\star}\right\|^{2}_2-  h_2^2((f_{\tau_k}^{\prime}(x_k))^{\dagger})\left\|  f_{\tau_k}(x_k)\right\|^{2}_2 \notag
\\
& \quad + 2\sigma^2_{\max}\left((f_{\tau_k}^{\prime}(x_k))^{\dagger}\right)\|f_{\tau_k}(x_k) \|_2\| f_{\tau_k}(x_k)-
f_{\tau_k}(x_{\star})-f_{\tau_k}^{\prime}(x_k)( x_{k}-x_{\star}) \|_2.\notag
\end{align}
Finally, we obtain the desired result by applying (\ref{Sec2.6}) in \Cref{lemma3}.

\end{proof}

\begin{theorem}
\label{theorem:MR-BSNK1}
If the nonlinear function $f $ satisfies the local tangential cone condition given in \Cref{definition}, $\eta=\max\limits _{i}
\eta_{i}<\frac{1}{2}$,
\begin{align}
\varepsilon=h_2^2((f_{\mathcal{I} }^{\prime}(x_k))^{\dagger})-2\eta\sigma^2_{\max}\left((f_{\mathcal{I} }^{\prime}(x_k))^{\dagger}\right)>0,\label{theorem:MR-BSNK1-1}
\end{align}
where $h_2^2((f_{\mathcal{I}}^{\prime}(x_k))^{\dagger})= \min \limits _{\mathcal{I}_{k}} h_2^2((f_{\mathcal{I}_{k}}^{\prime}(x_k))^{\dagger})$ and $\sigma^2_{\max}\left((f_{\mathcal{I} }^{\prime}(x_k))^{\dagger}\right)= \max \limits _{\mathcal{I}_{k}}\sigma^2_{\max}\left((f_{\mathcal{I}_{k}}^{\prime}(x_k))^{\dagger}\right)$, and $f(x_{\star})=0$, then the iterations of the MR-BSNK1 method, i.e., the first case in \Cref{alg-The (MR/MD)-BSNK1 method}, satisfy
\begin{align}
\mathbb{E}\left[\left\| {x}_{k+1}- {x}_{\star}\right\|^{2}_2\right]
 \leq
\left(1 - \varepsilon |\mathcal{I} |  \frac{1}{\xi_k} \frac{\beta}{m}\frac{1}{1+\eta^2}h_2^2\left( f^{\prime}\left(x_{k}\right)\right )\right )
 \mathbb{E}\left[\left\|x_{k}-{x}_{\star}\right\|^{2}_2\right],    \label{theorem:MR-BSNK1-2}
 \end{align}
  where
  \begin{align}
|\mathcal{I} |= \min \limits _{\mathcal{I}_{k}}|\mathcal{I}_{k}|
\quad\text{and}\quad
\xi_k=\frac{\sum  \limits _{\tau_k \in \binom{[m]}{\beta}} \left \|f_{\tau_k}(x_k)\right\|_{2}^{2}}{\sum  \limits _{\tau_k \in \binom{[m]}{\beta}}  \left\|f_{\tau_k}(x_k)\right\|_{\infty}^{2}}.\label{theorem:MR-BSNK1-3}
\end{align}
\end{theorem}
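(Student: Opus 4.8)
The plan is to reduce the block iteration of case~1 in \Cref{alg-The (MR/MD)-BSNK1 method} to the one-step descent estimate of \Cref{sec:theory-mutiple-lemma1} applied to the block $\tau_k=\mathcal{I}_k$, then replace the instance-dependent factor there by the uniform constant $\varepsilon$ of (\ref{theorem:MR-BSNK1-1}), lower bound the block residual $\left\|f_{\mathcal{I}_k}(x_k)\right\|_2^2$ using the defining property of $\mathcal{I}_k$, and finally average over the uniformly chosen $\tau_k$ exactly as in the proof of \Cref{theorem:MR-SNK}.

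First I would apply \Cref{sec:theory-mutiple-lemma1} with $\tau_k=\mathcal{I}_k$, which gives
\[
\left\|x_{k+1}-x_\star\right\|_2^2\le\left\|x_k-x_\star\right\|_2^2-\left(h_2^2((f_{\mathcal{I}_k}^{\prime}(x_k))^{\dagger})-2\eta\sigma^2_{\max}((f_{\mathcal{I}_k}^{\prime}(x_k))^{\dagger})\right)\left\|f_{\mathcal{I}_k}(x_k)\right\|_2^2.
\]
Since $h_2^2((f_{\mathcal{I}_k}^{\prime}(x_k))^{\dagger})\ge h_2^2((f_{\mathcal{I}}^{\prime}(x_k))^{\dagger})$ and $\sigma^2_{\max}((f_{\mathcal{I}_k}^{\prime}(x_k))^{\dagger})\le\sigma^2_{\max}((f_{\mathcal{I}}^{\prime}(x_k))^{\dagger})$ by the $\min/\max$ definitions attached to (\ref{theorem:MR-BSNK1-1}) and (\ref{theorem:MR-BSNK1-3}), the parenthesised factor is at least $\varepsilon>0$ for every realization of $\tau_k$, hence $\left\|x_{k+1}-x_\star\right\|_2^2\le\left\|x_k-x_\star\right\|_2^2-\varepsilon\left\|f_{\mathcal{I}_k}(x_k)\right\|_2^2$.

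Next I would bound the block residual from below. By construction $\mathcal{I}_k$ consists of $i_k$ together with all $h_k\in[m]\setminus\tau_k$ with $|f_{h_k}(x_k)|^2\ge\delta_k$, where $\delta_k=\max_{i\in\tau_k}|f_i(x_k)|^2=\left\|f_{\tau_k}(x_k)\right\|_\infty^2$; thus every coordinate of $\mathcal{I}_k$ contributes at least $\left\|f_{\tau_k}(x_k)\right\|_\infty^2$ to the sum, and $|\mathcal{I}_k|\ge|\mathcal{I}|$. Therefore
\[
\left\|f_{\mathcal{I}_k}(x_k)\right\|_2^2=\sum_{j\in\mathcal{I}_k}|f_j(x_k)|^2\ge|\mathcal{I}_k|\,\left\|f_{\tau_k}(x_k)\right\|_\infty^2\ge|\mathcal{I}|\,\left\|f_{\tau_k}(x_k)\right\|_\infty^2.
\]
I expect this to be the main obstacle, not because it is technically deep but because it is the step where the combinatorial structure of the greedy set $\mathcal{I}_k$ enters: one must recognise that the block sum dominates $|\mathcal{I}|$ copies of the single term $|f_{i_k}(x_k)|^2=\left\|f_{\tau_k}(x_k)\right\|_\infty^2$ used in the single-sample analysis of \Cref{theorem:MR-SNK}, which is precisely what produces the extra factor $|\mathcal{I}|$ in (\ref{theorem:MR-BSNK1-2}); one also has to be careful that $\varepsilon$ is a genuinely uniform lower bound over all realizations, which is why the $\min/\max$ appear in its definition.

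Finally I would take the conditional expectation $\mathbb{E}^k[\cdot]$ over the uniform choice of $\tau_k\in\binom{[m]}{\beta}$, giving
\[
\mathbb{E}^k[\left\|x_{k+1}-x_\star\right\|_2^2]\le\left\|x_k-x_\star\right\|_2^2-\varepsilon|\mathcal{I}|\frac{1}{\binom{m}{\beta}}\sum_{\tau_k\in\binom{[m]}{\beta}}\left\|f_{\tau_k}(x_k)\right\|_\infty^2,
\]
then use the definition of $\xi_k$ in (\ref{theorem:MR-BSNK1-3}) to rewrite $\sum_{\tau_k}\left\|f_{\tau_k}(x_k)\right\|_\infty^2=\xi_k^{-1}\sum_{\tau_k}\left\|f_{\tau_k}(x_k)\right\|_2^2$, together with the counting identity $\sum_{\tau_k\in\binom{[m]}{\beta}}\left\|f_{\tau_k}(x_k)\right\|_2^2=\binom{m-1}{\beta-1}\left\|f(x_k)\right\|_2^2=\frac{\beta}{m}\binom{m}{\beta}\left\|f(x_k)\right\|_2^2$ (each coordinate belongs to $\binom{m-1}{\beta-1}$ of the $\beta$-subsets), to obtain
\[
\mathbb{E}^k[\left\|x_{k+1}-x_\star\right\|_2^2]\le\left\|x_k-x_\star\right\|_2^2-\varepsilon|\mathcal{I}|\frac{1}{\xi_k}\frac{\beta}{m}\left\|f(x_k)-f(x_\star)\right\|_2^2.
\]
Applying (\ref{Sec2.7}) of \Cref{lemma3} with $x_1=x_k$, $x_2=x_\star$ and then the definition of $h_2(\cdot)$ gives $\left\|f(x_k)-f(x_\star)\right\|_2^2\ge\frac{1}{1+\eta^2}h_2^2(f^{\prime}(x_k))\left\|x_k-x_\star\right\|_2^2$, which yields the contraction factor in (\ref{theorem:MR-BSNK1-2}) after taking full expectation and invoking the tower rule.
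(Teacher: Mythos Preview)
Your proposal is correct and follows essentially the same route as the paper's proof: apply \Cref{sec:theory-mutiple-lemma1} to the block $\mathcal{I}_k$, pass to the uniform constant $\varepsilon$, lower bound $\left\|f_{\mathcal{I}_k}(x_k)\right\|_2^2\ge|\mathcal{I}_k|\left\|f_{\tau_k}(x_k)\right\|_\infty^2\ge|\mathcal{I}|\left\|f_{\tau_k}(x_k)\right\|_\infty^2$, average over $\tau_k$ using the $\xi_k$ definition and the counting identity, and finish with (\ref{Sec2.7}) and the tower rule. The only cosmetic difference is that the paper replaces $|\mathcal{I}_k|$ by $|\mathcal{I}|$ after writing out the expectation as a sum, whereas you do it pointwise before averaging; both are valid.
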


\begin{proof}
According to \Cref{sec:theory-mutiple-lemma1}, (\ref{theorem:MR-BSNK1-1}) and \Cref{alg-The (MR/MD)-BSNK1 method}, we have
\begin{align}
\left\|x_{k+1}-x_{\star}\right\|^{2}_2
&\leq
\left\|x_{k}-x_{\star}\right\|^{2}_2-  \left(h_2^2((f_{\mathcal{I}_{k}}^{\prime}(x_k))^{\dagger})-2\eta\sigma^2_{\max}\left((f_{\mathcal{I}_{k}}^{\prime}(x_k))^{\dagger}\right)\right)\left\|  f_{\mathcal{I}_{k}}(x_k)\right\|^{2}_2  \notag
\\
&\leq
\left\|x_{k}-x_{\star}\right\|^{2}_2-  \varepsilon\left\|  f_{\mathcal{I}_{k}}(x_k)\right\|^{2}_2 \notag
\\
&=
\left\|x_{k}-x_{\star}\right\|^{2}_2-  \varepsilon \sum\limits_{j\in\mathcal{I}_{k} }\left|  f_{j}(x_k)\right|^{2} \notag
\\
&\leq
\left\|x_{k}-x_{\star}\right\|^{2}_2-  \varepsilon |\mathcal{I}_{k}|\delta_k   \notag
\\
&=
\left\|x_{k}-x_{\star}\right\|^{2}_2-  \varepsilon |\mathcal{I}_{k}| \left\|  f_{\tau_k}(x_k)\right\|^{2}_{\infty}. \notag
\end{align}
Taking the expectation on both sides conditioned on $x_{k}$ and combining with the definitions of $|\mathcal{I} |$ and $\xi_k$ in (\ref{theorem:MR-BSNK1-3}), we get
\begin{align}
\mathbb{E}^k\left[\left\|x_{k+1}-x_{\star}\right\|^{2}_2\right]
&\leq
\left\|x_{k}-x_{\star}\right\|^{2}_2-  \varepsilon\mathbb{E}^k\left[ |\mathcal{I}_{k}| \left\|  f_{\tau_k}(x_k)\right\|^{2}_{\infty}\right] \notag
\\
&=
\left\|x_{k}-x_{\star}\right\|^{2}_2- \varepsilon \sum\limits_{\tau_k\in\binom{[m]}{\beta}}\frac{1}{\binom{m}{\beta}} |\mathcal{I}_{k}| \left\|  f_{\tau_k}(x_k)\right\|^{2}_{\infty}  \notag
\\
&\leq
\left\|x_{k}-x_{\star}\right\|^{2}_2-  \varepsilon |\mathcal{I} |\sum\limits_{\tau_k\in\binom{[m]}{\beta}}\frac{1}{\binom{m}{\beta}} \left\|  f_{\tau_k}(x_k)\right\|^{2}_{\infty}  \notag
\\
&=
\left\|x_{k}-x_{\star}\right\|^{2}_2-  \varepsilon |\mathcal{I} | \frac{1}{\binom{m}{\beta}} \frac{1}{\xi_k} \sum\limits_{\tau_k\in\binom{[m]}{\beta}} \left\|  f_{\tau_k}(x_k)\right\|^{2}_{2}  \notag
\\
&=
\left\|x_{k}-x_{\star}\right\|^{2}_2-  \varepsilon |\mathcal{I} | \frac{1}{\binom{m}{\beta}} \frac{1}{\xi_k} \frac{\binom{m}{\beta}\beta}{m}\left\|f(x_k)\right\|^{2}_{2}  \notag
\\
&=
\left\|x_{k}-x_{\star}\right\|^{2}_2-  \varepsilon |\mathcal{I} |  \frac{1}{\xi_k} \frac{\beta}{m}\left\|f(x_k)-f(x_{\star})\right\|^{2}_{2}.  \notag
\end{align}
Moreover, noting that (\ref{Sec2.7}) in \Cref{lemma3} and the definition of $h_2(\cdot)$ in \Cref{subsec:notation}, we get
\begin{align}
\mathbb{E}^k\left[\left\|x_{k+1}-x_{\star}\right\|^{2}_2\right]
&\leq
\left\|x_{k}-x_{\star}\right\|^{2}_2-  \varepsilon |\mathcal{I} |  \frac{1}{\xi_k} \frac{\beta}{m}\frac{1}{1+\eta^2}\left\|f^{\prime}\left(x_{k}\right)\left(x_{k}-x_{\star}\right)\right\|^{2}_{2}  \notag
\\
&\leq
\left\|x_{k}-x_{\star}\right\|^{2}_2-  \varepsilon |\mathcal{I} |  \frac{1}{\xi_k} \frac{\beta}{m}\frac{1}{1+\eta^2}h_2^2\left( f^{\prime}\left(x_{k}\right)\right )\left\| x_{k}-x_{\star}\right\|^{2}_{2} \notag
\\
&\leq
\left(1 - \varepsilon |\mathcal{I} |  \frac{1}{\xi_k} \frac{\beta}{m}\frac{1}{1+\eta^2}h_2^2\left( f^{\prime}\left(x_{k}\right)\right )\right )\left\| x_{k}-x_{\star}\right\|^{2}_{2}.\notag
\end{align}
Thus, taking expectation on both sides and using the tower rule of expectation, we get the desired result (\ref{theorem:MR-BSNK1-2}).
\end{proof}

\begin{remark}
\label{remark_th_MR-BSNK1-1}
Since the iterations of the MR-BSNK1 method satisfy
\begin{align}
\mathbb{E}\left[\left\| {x}_{k+1}- {x}_{\star}\right\|^{2}_2\right]
 &\leq
\left(1 - \varepsilon |\mathcal{I} |  \frac{1}{\xi_k} \frac{\beta}{m}\frac{1}{1+\eta^2}h_2^2\left( f^{\prime}\left(x_{k}\right)\right )\right )
 \mathbb{E}\left[\left\|x_{k}-{x}_{\star}\right\|^{2}_2\right]\notag
 \\
  &=
 \mathbb{E}\left[\left\|x_{k}-{x}_{\star}\right\|^{2}_2\right]  - \varepsilon |\mathcal{I} |  \frac{1}{\xi_k} \frac{\beta}{m}\frac{1}{1+\eta^2}h_2^2\left( f^{\prime}\left(x_{k}\right)\right )
 \mathbb{E}\left[\left\|x_{k}-{x}_{\star}\right\|^{2}_2\right],\notag
 \end{align}
 which together with the facts $0\leq \mathbb{E}\left[\left\| {x}_{k+1}- {x}_{\star}\right\|^{2}_2\right]$ and $\varepsilon |\mathcal{I} |  \frac{1}{\xi_k} \frac{\beta}{m}\frac{1}{1+\eta^2}h_2^2\left( f^{\prime}\left(x_{k}\right)\right ) >0$ indicates
\begin{align}
0
\leq
 \mathbb{E}\left[\left\|x_{k}-{x}_{\star}\right\|^{2}_2\right]  - \varepsilon |\mathcal{I} |  \frac{1}{\xi_k} \frac{\beta}{m}\frac{1}{1+\eta^2}h_2^2\left( f^{\prime}\left(x_{k}\right)\right )
 \mathbb{E}\left[\left\|x_{k}-{x}_{\star}\right\|^{2}_2\right]
 <\mathbb{E}\left[\left\|x_{k}-{x}_{\star}\right\|^{2}_2\right]  .\notag
 \end{align}
Then we can obtain that the convergence factor of the MR-BSNK1 method is smaller than 1. Similarly, we can get that the convergence factors of the MD-BSNK1, MR-BSNK2 and MD-BSNK2 methods, which are respectively presented in \Cref{theorem:MD-BSNK1,theorem:MR-BSNK2,theorem:MD-BSNK2}, are also smaller than 1.
\end{remark}

\begin{theorem}
\label{theorem:MD-BSNK1}
If the nonlinear function $f $ satisfies the local tangential cone condition given in \Cref{definition}, $\eta=\max\limits _{i}
\eta_{i}<\frac{1}{2}$, $
\varepsilon=h_2^2((f_{\mathcal{I} }^{\prime}(x_k))^{\dagger})-2\eta\sigma^2_{\max}\left((f_{\mathcal{I} }^{\prime}(x_k))^{\dagger}\right)>0,$
where $h_2^2((f_{\mathcal{I}}^{\prime}(x_k))^{\dagger})= \min \limits _{\mathcal{I}_{k}} h_2^2((f_{\mathcal{I}_{k}}^{\prime}(x_k))^{\dagger})$ and $\sigma^2_{\max}\left((f_{\mathcal{I} }^{\prime}(x_k))^{\dagger}\right)= \max \limits _{\mathcal{I}_{k}}\sigma^2_{\max}\left((f_{\mathcal{I}_{k}}^{\prime}(x_k))^{\dagger}\right)$, and $f(x_{\star})=0$, then the iterations of MD-BSNK1 method, i.e., the second case in \Cref{alg-The (MR/MD)-BSNK1 method}, satisfy
\begin{align}
\mathbb{E}\left[\left\| {x}_{k+1}- {x}_{\star}\right\|^{2}_2\right]
 \leq
\left(1-   \varepsilon  \min \limits _{i\in[m]} \left\|\nabla f_{i}\left(x_{k}\right)\right\|_2^{2}|\mathcal{I}|\frac{\beta}{\zeta_k}
\frac{1}{1+\eta^2} \frac{ h_2^2(G \left(x_k\right))}{m}\right)
 \mathbb{E}\left[\left\|x_{k}-{x}_{\star}\right\|^{2}_2\right],\label{theorem:MD-BSNK1-2}
 \end{align}
  where
  \begin{align}
|\mathcal{I} |= \min \limits _{\mathcal{I}_{k}}|\mathcal{I}_{k}|
\quad\text{and}\quad
\zeta_k=\frac{\sum  \limits _{\tau_k \in \binom{[m]}{\beta}} \left \|u_{\tau_k}(x_k)\right\|_{2}^{2}}{\sum  \limits _{\tau_k \in \binom{[m]}{\beta}}  \left\|u_{\tau_k}(x_k)\right\|_{\infty}^{2}}.\label{theorem:MD-BSNK1-3}
\end{align}
\end{theorem}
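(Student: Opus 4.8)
The plan is to follow the proof of \Cref{theorem:MR-BSNK1} almost verbatim, with the maximum-residual quantity $\|f_{\tau_k}(x_k)\|_\infty$ replaced by the maximum-distance quantity $\|u_{\tau_k}(x_k)\|_\infty$, and with one extra estimate that converts a sum of $|f_j(x_k)|^2$ into a sum of $|u_j(x_k)|^2$. First I would apply \Cref{sec:theory-mutiple-lemma1} with the generic index subset taken to be the set $\mathcal{I}_k$ generated by the MD rule in \Cref{alg-The (MR/MD)-BSNK1 method}; using the hypothesis that $\varepsilon=h_2^2((f_{\mathcal{I}}^{\prime}(x_k))^{\dagger})-2\eta\sigma^2_{\max}((f_{\mathcal{I}}^{\prime}(x_k))^{\dagger})>0$ together with the definitions $h_2^2((f_{\mathcal{I}}^{\prime}(x_k))^{\dagger})=\min_{\mathcal{I}_k}h_2^2((f_{\mathcal{I}_k}^{\prime}(x_k))^{\dagger})$ and $\sigma^2_{\max}((f_{\mathcal{I}}^{\prime}(x_k))^{\dagger})=\max_{\mathcal{I}_k}\sigma^2_{\max}((f_{\mathcal{I}_k}^{\prime}(x_k))^{\dagger})$, this gives the deterministic one-step bound
\begin{align}
\|x_{k+1}-x_{\star}\|^2_2 \le \|x_{k}-x_{\star}\|^2_2 - \varepsilon\|f_{\mathcal{I}_k}(x_k)\|^2_2 = \|x_{k}-x_{\star}\|^2_2 - \varepsilon\sum_{j\in\mathcal{I}_k}|f_j(x_k)|^2.\notag
\end{align}

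The key new estimate is the lower bound on $\sum_{j\in\mathcal{I}_k}|f_j(x_k)|^2$. For each $j\in\mathcal{I}_k$ we have $|f_j(x_k)|^2=\|\nabla f_j(x_k)\|^2_2\,|u_j(x_k)|^2\ge\big(\min_{i\in[m]}\|\nabla f_i(x_k)\|^2_2\big)|u_j(x_k)|^2$, and by construction of $\mathcal{I}_k$ in the MD case every index retained in $\mathcal{I}_k$ satisfies $|u_j(x_k)|^2\ge\delta_k=\|u_{\tau_k}(x_k)\|^2_\infty$ (this must be verified both for the threshold-selected indices $h_k$ and for $i_k$ itself, for which $|u_{i_k}(x_k)|^2=\delta_k$). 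Combining these with $|\mathcal{I}|=\min_{\mathcal{I}_k}|\mathcal{I}_k|\le|\mathcal{I}_k|$ yields
\begin{align}
\|x_{k+1}-x_{\star}\|^2_2 \le \|x_{k}-x_{\star}\|^2_2 - \varepsilon\,\Big(\min_{i\in[m]}\|\nabla f_i(x_k)\|^2_2\Big)\,|\mathcal{I}|\,\|u_{\tau_k}(x_k)\|^2_\infty.\notag
\end{align}

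From here the argument reproduces the averaging computation in the proof of \Cref{theorem:MD-SNK}: take the conditional expectation $\mathbb{E}^k[\cdot]$ over the uniform choice of $\tau_k\in\binom{[m]}{\beta}$, replace $\sum_{\tau_k}\|u_{\tau_k}(x_k)\|^2_\infty$ by $\frac{1}{\zeta_k}\sum_{\tau_k}\|u_{\tau_k}(x_k)\|^2_2$ using the definition of $\zeta_k$ in (\ref{theorem:MD-BSNK1-3}), and use the counting identity $\sum_{\tau_k\in\binom{[m]}{\beta}}\|u_{\tau_k}(x_k)\|^2_2=\frac{\binom{m}{\beta}\beta}{m}\|u(x_k)\|^2_2$ to pass to $\|u(x_k)\|^2_2$. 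Then I would apply (\ref{Sec2.7.1}) of \Cref{lemma4}, namely $\|u(x_k)\|^2_2\ge\frac{1}{1+\eta^2}\|G(x_k)(x_k-x_{\star})\|^2_2$, followed by $\|G(x_k)(x_k-x_{\star})\|^2_2\ge h_2^2(G(x_k))\|x_k-x_{\star}\|^2_2$, and finally take the full expectation and invoke the tower rule to obtain (\ref{theorem:MD-BSNK1-2}). I expect the main obstacle to be the bookkeeping in the key estimate: one has to be careful that bounding $\|\nabla f_j(x_k)\|^2_2$ below by the \emph{global} minimum $\min_{i\in[m]}\|\nabla f_i(x_k)\|^2_2$ (not by a minimum over $\mathcal{I}_k$, which would depend on the random set) is exactly what keeps the resulting bound free of $\mathcal{I}_k$ so that the subsequent expectation over $\tau_k$ averages cleanly; once that is set up correctly, the remaining steps are identical to those in \Cref{theorem:MR-BSNK1} and \Cref{theorem:MD-SNK}.
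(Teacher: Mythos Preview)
Your proposal is correct and matches the paper's own proof essentially step for step: apply \Cref{sec:theory-mutiple-lemma1} and the uniform $\varepsilon$-bound to reach $\|x_{k+1}-x_\star\|_2^2\le\|x_k-x_\star\|_2^2-\varepsilon\sum_{j\in\mathcal{I}_k}|f_j(x_k)|^2$, convert $|f_j|^2$ to $|u_j|^2$ via $\min_{i\in[m]}\|\nabla f_i(x_k)\|_2^2$, use $|u_j(x_k)|^2\ge\delta_k=\|u_{\tau_k}(x_k)\|_\infty^2$ for $j\in\mathcal{I}_k$ and $|\mathcal{I}_k|\ge|\mathcal{I}|$, then average over $\tau_k$ and finish with $\zeta_k$, the counting identity, \Cref{lemma4}, and $h_2^2(G(x_k))$. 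The only cosmetic difference is that the paper replaces $|\mathcal{I}_k|$ by $|\mathcal{I}|$ after taking the conditional expectation rather than before, which is immaterial.
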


\begin{proof}
Similar to the proof of \Cref{theorem:MR-BSNK1}, we can obtain
\begin{align}
\left\|x_{k+1}-x_{\star}\right\|^{2}_2
&\leq
\left\|x_{k}-x_{\star}\right\|^{2}_2-  \varepsilon \sum\limits_{j\in\mathcal{I}_{k} }\left|  f_{j}(x_k)\right|^{2} \notag
\\
&=
\left\|x_{k}-x_{\star}\right\|^{2}_2-  \varepsilon \sum\limits_{j\in\mathcal{I}_{k} }\frac{\left|  f_{j}(x_k)\right|^{2}}{\left\|\nabla f_{j}\left(x_{k}\right)\right\|_2^{2}} \left\|\nabla f_{j}\left(x_{k}\right)\right\|_2^{2}\notag
\\
&\leq
\left\|x_{k}-x_{\star}\right\|^{2}_2-  \varepsilon  \min \limits _{i\in[m]} \left\|\nabla f_{i}\left(x_{k}\right)\right\|_2^{2} \sum\limits_{j\in\mathcal{I}_{k} }\frac{\left|  f_{j}(x_k)\right|^{2}}{\left\|\nabla f_{j}\left(x_{k}\right)\right\|_2^{2}},\notag
\end{align}
which together with the definition of $u(x)$ in \Cref{subsec:notation} and \Cref{alg-The (MR/MD)-BSNK1 method} leads to
\begin{align}
\left\|x_{k+1}-x_{\star}\right\|^{2}_2
&\leq
\left\|x_{k}-x_{\star}\right\|^{2}_2-  \varepsilon  \min \limits _{i\in[m]} \left\|\nabla f_{i}\left(x_{k}\right)\right\|_2^{2} \sum\limits_{j\in\mathcal{I}_{k} } \left|u_{j}(x_k)\right|^{2}  \notag
\\
&\leq
\left\|x_{k}-x_{\star}\right\|^{2}_2-  \varepsilon  \min \limits _{i\in[m]} \left\|\nabla f_{i}\left(x_{k}\right)\right\|_2^{2} |\mathcal{I}_{k}| \delta_k  \notag
\\
&=
\left\|x_{k}-x_{\star}\right\|^{2}_2-  \varepsilon  \min \limits _{i\in[m]} \left\|\nabla f_{i}\left(x_{k}\right)\right\|_2^{2} |\mathcal{I}_{k}| \left\|u_{\tau_k}(x_k)\right\|^{2}_\infty  \notag
\end{align}
Now, taking expectation of both sides conditioned on $x_{k}$ gives
\begin{align}
\mathbb{E}^{k}\left[\left\| {x}_{k+1}- {x}_{\star}\right\|^{2}_2\right]
&\leq \left\|x_{k}-x_{\star}\right\|^{2}_2 -\mathbb{E}^{k}\left[ \varepsilon  \min \limits _{i\in[m]} \left\|\nabla f_{i}\left(x_{k}\right)\right\|_2^{2} |\mathcal{I}_{k}| \left\|u_{\tau_k}(x_k)\right\|^{2}_\infty\right] \notag
\\
&=\left\|x_{k}-x_{\star}\right\|^{2}_2 - \sum\limits_{\tau_k\in\binom{[m]}{\beta}}\frac{1}{\binom{m}{\beta}}\varepsilon  \min \limits _{i\in[m]} \left\|\nabla f_{i}\left(x_{k}\right)\right\|_2^{2} |\mathcal{I}_{k}|
\left\|u_{\tau_k}(x_k)\right\|^{2}_\infty\notag
\\
&\leq \left\|x_{k}-x_{\star}\right\|^{2}_2 -\frac{1}{\binom{m}{\beta}}\varepsilon  \min \limits _{i\in[m]} \left\|\nabla f_{i}\left(x_{k}\right)\right\|_2^{2}\sum\limits_{\tau_k\in\binom{[m]}{\beta}}|\mathcal{I}_{k}|
\left\|u_{\tau_k}(x_k)\right\|^{2}_\infty ,   \notag
\end{align}
which together with the definitions of $|\mathcal{I} |$ and $\zeta_k$ in (\ref{theorem:MD-BSNK1-3}), we have
\begin{align}
\mathbb{E}^{k}\left[\left\| {x}_{k+1}- {x}_{\star}\right\|^{2}_2\right]
&\leq
\left\|x_{k}-x_{\star}\right\|^{2}_2 -\frac{1}{\binom{m}{\beta}}\varepsilon  \min \limits _{i\in[m]} \left\|\nabla f_{i}\left(x_{k}\right)\right\|_2^{2}|\mathcal{I} |\frac{1}{\zeta_k} \sum\limits_{\tau_k\in\binom{[m]}{\beta}}
\left\|u_{\tau_k}(x_k)\right\|^{2}_2   \notag
\\
&=
\left\|x_{k}-x_{\star}\right\|^{2}_2 -\frac{1}{\binom{m}{\beta}}\varepsilon  \min \limits _{i\in[m]} \left\|\nabla f_{i}\left(x_{k}\right)\right\|_2^{2}|\mathcal{I}| \frac{1}{\zeta_k} \frac{\binom{m}{\beta}\beta}{m}
\left\|u(x_k)\right\|^{2}_2   \notag
\\
&=
\left\|x_{k}-x_{\star}\right\|^{2}_2 - \varepsilon  \min \limits _{i\in[m]} \left\|\nabla f_{i}\left(x_{k}\right)\right\|_2^{2}|\mathcal{I}|\frac{\beta}{\zeta_k} \frac{ 1}{m}
\left\|u(x_k)\right\|^{2}_2  . \notag
\end{align}
Moreover, by making use of (\ref{Sec2.7.1}) in \Cref{lemma4} and the definition of $h_2(\cdot)$ in \Cref{subsec:notation}, we can obtain
\begin{align}
\mathbb{E}^{k}\left[\left\| {x}_{k+1}- {x}_{\star}\right\|^{2}_2\right]
&\leq
\left\|x_{k}-x_{\star}\right\|^{2}_2 -  \varepsilon  \min \limits _{i\in[m]} \left\|\nabla f_{i}\left(x_{k}\right)\right\|_2^{2}|\mathcal{I}|\frac{\beta}{\zeta_k} \frac{ 1}{m}
\frac{1}{1+\eta^2}\left\|G \left(x_k\right)\left(x_k-x_{\star}\right)\right\|^2_2  \notag
\\
&\leq
\left(1-   \varepsilon  \min \limits _{i\in[m]} \left\|\nabla f_{i}\left(x_{k}\right)\right\|_2^{2}|\mathcal{I}|\frac{\beta}{\zeta_k}
\frac{1}{1+\eta^2} \frac{ h_2^2(G \left(x_k\right))}{m}\right)
\left\|\left(x_k-x_{\star}\right)\right\|^2_2 . \notag
\end{align}
Finally, the desired result (\ref{theorem:MD-BSNK1-2}) can be obtained by taking the full expectation on both sides.

\end{proof}

\begin{remark}\label{remark_th_MD-BSNK1-1}
Letting $D$ be an $m$-by-$m$ diagonal matrix of the following form
 $$
 D=\begin{bmatrix}
 \|\nabla f_1(x_k)\|_2&&\\
 &\ddots&\\
 && \|\nabla f_m(x_k)\|_2
 \end{bmatrix},
 $$
then we get a relation between $f^{\prime}(x_k)$ and $G(x_k)$, that is,
\begin{align}
f^{\prime}(x_k)=DG(x_k).\label{remark_theorem_MD-BSNK1:1}
 \end{align}
Now, by setting $A=G(x_k)G(x_k)^T$ and $B=D^TD$, since both matrices $A$ and $B$ are symmetric positive-semidefinite, we have
\begin{align}
\lambda_1(B)I\geq B\geq \lambda_m(B)I\notag
 \end{align}
and then obtain
\begin{align}
A^{\frac{1}{2}}BA^{\frac{1}{2}}= A^{\frac{1}{2}}(B-\lambda_m(B)I)A^{\frac{1}{2}}+ \lambda_m(B) A\geq  \lambda_m(B) A.\notag
 \end{align}
Further, by applying \Cref{lemma2}, we get
\begin{align}
\lambda_m\left(A^{\frac{1}{2}}BA^{\frac{1}{2}}\right)
&=\lambda_m\left(A^{\frac{1}{2}}(B-\lambda_m(B)I)A^{\frac{1}{2}}+ \lambda_m(B) A\right)\notag
\\
&\geq\lambda_m\left(A^{\frac{1}{2}}(B-\lambda_m(B)I)A^{\frac{1}{2}}  \right)+\lambda_m\left( \lambda_m(B) A\right)\notag
\\
&=\lambda_m\left(B\right)\lambda_m\left(A\right),\notag
 \end{align}
which implies
\begin{align}
  \lambda_m\left(AB\right)\geq\lambda_m(B)\lambda_m\left(A\right).\notag
 \end{align}
 That is,
 \begin{align}
\lambda_m\left(G(x_k)G(x_k)^TD^TD\right)=\lambda_m\left(DG(x_k)G(x_k)^TD^T\right)
\geq \lambda_m(D^TD)\lambda_m\left(G(x_k)G(x_k)^T\right),\notag
 \end{align}
 which together with the definitions of $D$, $G(x_k)$ and $f^{\prime}(x_k)=DG(x_k)$ in (\ref{remark_theorem_MD-BSNK1:1}) indicates
  \begin{align}
h_2^2\left( f^{\prime}\left(x_{k}\right)\right )\geq \min \limits _{i\in[m]} \left\|\nabla f_{i}\left(x_{k}\right)\right\|_2^{2} h_2^2(G \left(x_k\right))
 .\label{remark_theorem_MD-BSNK1:2}
 \end{align}
Meanwhile, from \Cref{theorem:MR-BSNK1,theorem:MD-BSNK1}, we know that the convergence factors of the MR-BSNK1 and MD-BSNK1 methods are
$$
\rho_{\text{MR-BSNK1}}\sim 1 -     \frac{\varepsilon}{m}\frac{ |\mathcal{I} |}{1+\eta^2}h_2^2\left( f^{\prime}\left(x_{k}\right)\right ),
\quad
\text{and}
\quad
\rho_{\text{MD-BSNK1}}\sim 1-   \frac{\varepsilon}{m}
\frac{|\mathcal{I}|}{1+\eta^2}  \min \limits _{i\in[m]} \left\|\nabla f_{i}\left(x_{k}\right)\right\|_2^{2} h_2^2(G \left(x_k\right)) ,
$$
respectively.
Thus, according to (\ref{remark_theorem_MD-BSNK1:2}), we conclude that
\begin{align}
1 -     \frac{\varepsilon}{m}\frac{ |\mathcal{I} |}{1+\eta^2}h_2^2\left( f^{\prime}\left(x_{k}\right)\right )
\leq
   1-   \frac{\varepsilon}{m}
\frac{|\mathcal{I}|}{1+\eta^2}  \min \limits _{i\in[m]} \left\|\nabla f_{i}\left(x_{k}\right)\right\|_2^{2} h_2^2(G \left(x_k\right)), \notag
\end{align}
which means that the convergence factor of the MR-BSNK1 method is a smaller than that of the MD-BSNK1 method.
\end{remark}

\begin{theorem}
\label{theorem:MR-BSNK2}
If the nonlinear function $f $ satisfies the local tangential cone condition given in \Cref{definition}, $\eta=\max\limits _{i}
\eta_{i}<\frac{1}{2}$, $
\alpha=h_2^2((f_{\mathcal{J} }^{\prime}(x_k))^{\dagger})-2\eta\sigma^2_{\max}\left((f_{\mathcal{J} }^{\prime}(x_k))^{\dagger}\right)>0,$
where $h_2^2((f_{\mathcal{J}}^{\prime}(x_k))^{\dagger})= \min \limits _{\mathcal{J}_{k}} h_2^2((f_{\mathcal{J}_{k}}^{\prime}(x_k))^{\dagger})$ and $\sigma^2_{\max}\left((f_{\mathcal{J} }^{\prime}(x_k))^{\dagger}\right)= \max \limits _{\mathcal{J}_{k}}\sigma^2_{\max}\left((f_{\mathcal{J}_{k}}^{\prime}(x_k))^{\dagger}\right)$, $f(x_{\star})=0$, and $ \left\|  f_{\tau_h}(x_k)\right\|^{2}_{\infty}= \min \limits _{1\leq i\leq \nu}\left\|  f_{\tau_i}(x_k)\right\|^{2}_{\infty} $, then the iterations of the MR-BSNK2 method, i.e., the first case in \Cref{alg-The (MR/MD)-BSNK2 method}, satisfy
\begin{align}
\mathbb{E}\left[\left\| {x}_{k+1}- {x}_{\star}\right\|^{2}_2\right]
 \leq
\left(1-   \frac{\alpha\nu}{|\tau_h|}\frac{h_2^2(f^{\prime}_{\tau_h}\left(x_{k}\right))}{1+\eta^2}\right )
 \mathbb{E}\left[\left\|x_{k}-{x}_{\star}\right\|_2^{2}\right].\label{theorem:MR-BSNK2-1}
 \end{align}
\end{theorem}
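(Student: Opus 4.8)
The plan is to follow the same template used for \Cref{theorem:MR-BSNK1}, but with the block index set $\mathcal{J}_k$ produced by \Cref{alg-The (MR/MD)-BSNK2 method} and with an aggregation over the $\nu$ disjoint blocks $\tau_1,\dots,\tau_\nu$ replacing the single-block bookkeeping. First I would apply \Cref{sec:theory-mutiple-lemma1} with the generic subset taken to be $\mathcal{J}_k$, to the update $x_{k+1}=x_k-(f^{\prime}_{\mathcal{J}_k}(x_k))^{\dagger}f_{\mathcal{J}_k}(x_k)$. Since $\alpha=h_2^2((f^{\prime}_{\mathcal{J}}(x_k))^{\dagger})-2\eta\sigma^2_{\max}((f^{\prime}_{\mathcal{J}}(x_k))^{\dagger})$ is built from the $\min$/$\max$ over all admissible realizations of $\mathcal{J}_k$, it is a valid lower bound on $h_2^2((f^{\prime}_{\mathcal{J}_k}(x_k))^{\dagger})-2\eta\sigma^2_{\max}((f^{\prime}_{\mathcal{J}_k}(x_k))^{\dagger})$ for every realization, so \Cref{sec:theory-mutiple-lemma1} gives
\begin{align}
\left\|x_{k+1}-x_{\star}\right\|^{2}_2 \leq \left\|x_{k}-x_{\star}\right\|^{2}_2 - \alpha\left\|f_{\mathcal{J}_k}(x_k)\right\|^{2}_2 . \notag
\end{align}

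Next, because $\tau_1,\dots,\tau_\nu$ are disjoint and $t_j=\arg\max_{i\in\tau_j}|f_i(x_k)|^2$, the indices $t_1,\dots,t_\nu$ are distinct and $\left\|f_{\mathcal{J}_k}(x_k)\right\|_2^2=\sum_{j=1}^{\nu}|f_{t_j}(x_k)|^2=\sum_{j=1}^{\nu}\left\|f_{\tau_j}(x_k)\right\|_\infty^2$. I would then bound this sum below by $\nu$ times its smallest term, i.e.\ by $\nu\left\|f_{\tau_h}(x_k)\right\|_\infty^2$ with $\tau_h$ the block singled out in the hypothesis, and use the elementary inequality $\left\|f_{\tau_h}(x_k)\right\|_\infty^2\geq\frac{1}{|\tau_h|}\left\|f_{\tau_h}(x_k)\right\|_2^2$ to obtain $\left\|f_{\mathcal{J}_k}(x_k)\right\|_2^2\geq\frac{\nu}{|\tau_h|}\left\|f_{\tau_h}(x_k)\right\|_2^2$. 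Applying (\ref{Sec2.7}) of \Cref{lemma3} with $x_1=x_k$, $x_2=x_\star$, the restriction index set $\tau=\tau_h$, and $f(x_\star)=0$, and then the definition of $h_2(\cdot)$ from \Cref{subsec:notation}, yields $\left\|f_{\tau_h}(x_k)\right\|_2^2\geq\frac{1}{1+\eta^2}h_2^2(f^{\prime}_{\tau_h}(x_k))\left\|x_k-x_\star\right\|_2^2$. Chaining these into the display above produces
\begin{align}
\left\|x_{k+1}-x_{\star}\right\|^{2}_2 \leq \left(1-\frac{\alpha\nu}{|\tau_h|}\frac{h_2^2(f^{\prime}_{\tau_h}(x_k))}{1+\eta^2}\right)\left\|x_{k}-x_{\star}\right\|^{2}_2 , \notag
\end{align}
after which taking the conditional expectation $\mathbb{E}^k[\cdot]$ and then the full expectation via the tower rule gives (\ref{theorem:MR-BSNK2-1}).

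The step needing the most care is the handling of $\tau_h$ when passing to expectation: the partition $\{\tau_j\}$, and hence the smallest-$\ell_\infty$-norm block $\tau_h$, are themselves random, so one must either interpret the contraction factor as conditioned on the sampled partition, or argue that the per-step inequality above holds for every realization before pulling the (deterministic, given $x_k$) factor $\left\|x_k-x_\star\right\|_2^2$ through the expectation — exactly the kind of bookkeeping done with $\xi_k$ in \Cref{theorem:MR-BSNK1}, but here the worst block enters rather than an average. A secondary, minor point is that the $\ell_\infty$-to-$\ell_2$ reduction on $\tau_h$ is lossy, so the factor is not tight; this is the analogue of how $\zeta_k$ (respectively $\xi_k$) degraded the bounds in \Cref{theorem:MD-SNK} (respectively \Cref{theorem:MR-SNK}). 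Everything else — distinctness of the $t_j$, the identity $\left\|f_{\mathcal{J}_k}(x_k)\right\|_2^2=\sum_j\left\|f_{\tau_j}(x_k)\right\|_\infty^2$, and the invocation of \Cref{lemma3} — is routine, and the claim that the resulting factor lies in $(0,1)$ can be deferred to a remark in the style of \Cref{remark_th_MR-BSNK1-1}.
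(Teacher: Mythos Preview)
Your proposal is correct and follows essentially the same route as the paper: apply \Cref{sec:theory-mutiple-lemma1} with the uniform lower bound $\alpha$, rewrite $\|f_{\mathcal{J}_k}(x_k)\|_2^2=\sum_{j=1}^{\nu}\|f_{\tau_j}(x_k)\|_\infty^2$, bound this below by $\nu\|f_{\tau_h}(x_k)\|_\infty^2\geq\frac{\nu}{|\tau_h|}\|f_{\tau_h}(x_k)\|_2^2$, invoke (\ref{Sec2.7}) of \Cref{lemma3}, and take expectations. The only difference is ordering: the paper takes $\mathbb{E}^k$ immediately after the lemma and expands the expectation over the sequential without-replacement sampling before replacing each summand by $\|f_{\tau_h}(x_k)\|_\infty^2$, whereas you do the pointwise bounding first and the expectation last --- your ordering is actually cleaner and sidesteps the subtlety you flag about $\tau_h$ being random, which the paper's presentation glosses over, but the content is identical.
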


\begin{proof}
Following a similar argument as in \Cref{theorem:MR-BSNK1} and from \Cref{alg-The (MR/MD)-BSNK2 method}, we get
\begin{align}
\left\|x_{k+1}-x_{\star}\right\|^{2}_2
&\leq
\left\|x_{k}-x_{\star}\right\|^{2}_2-  \alpha \sum\limits_{j\in\mathcal{J}_{k} }\left|  f_{j}(x_k)\right|^{2} \notag
\\
&=
\left\|x_{k}-x_{\star}\right\|^{2}_2- \alpha \sum\limits_{i=1}^{\nu}  \left\|  f_{\tau_i}(x_k)\right\|^{2}_{\infty}. \notag
\end{align}
Taking the expectation on both sides conditioned on $x_{k}$, we have
\begin{align}
\mathbb{E}^k\left[\left\|x_{k+1}-x_{\star}\right\|^{2}_2\right]
&\leq
\left\|x_{k}-x_{\star}\right\|^{2}_2- \alpha\mathbb{E}^k\left[ \sum\limits_{i=1}^{\nu}  \left\|  f_{\tau_i}(x_k)\right\|^{2}_{\infty}\right] \notag
\\
&=
\left\|x_{k}-x_{\star}\right\|^{2}_2-  \alpha\sum\limits_{i=1}^{\nu} \mathbb{E}^k\left[ \left\|  f_{\tau_i}(x_k)\right\|^{2}_{\infty}\right]. \notag
\end{align}
Noting that
\begin{align}
\sum\limits_{i=1}^{\nu} \mathbb{E}^k\left[ \left\|  f_{\tau_i}(x_k)\right\|^{2}_{\infty}\right]
&= \mathbb{E}^k[\|  f_{\tau_1}(x_k) \|_{\infty}^{2}]  +\mathbb{E}^k[ \|f_{\tau_2}(x_k) \|_{\infty}^{2}]+\cdots +\mathbb{E}^k [\|f_{\tau_\nu}(x_k) \|_{\infty}^{2}]\notag
\\
&=  \sum  \limits _{\tau_1 \in \binom{[m]}{\beta_1}} \frac{1}{\binom{m}{\beta_1}}   \| f_{\tau_1}(x_k)\|_{\infty}^{2}  +\sum  \limits _{\tau_2 \in \binom{[m]-\beta_1}{\beta_2}} \frac{1}{\binom{m-\beta_1}{\beta_2}}   \| f_{\tau_2}(x_k)\|_{\infty}^{2}              \notag
\\
&~~~~+\cdots +\sum  \limits _{\tau_{ \nu} \in \binom{[m]-\beta_1-\cdots-\beta_{\nu-1}}{\beta_\nu}} \frac{1}{\binom{m-\beta_1-\cdots-\beta_{\nu-1}}{\beta_\nu}}   \| f_{\tau_\nu}(x_k)\|_{\infty}^{2},  \notag
\end{align}
which together with the fact $\left\|  f_{\tau_h}(x_k)\right\|^{2}_{\infty}= \min \limits _{1\leq i\leq \nu}\left\|  f_{\tau_i}(x_k)\right\|^{2}_{\infty}$ gives
\begin{align}
\sum\limits_{i=1}^{\nu} \mathbb{E}^k\left[ \left\|  f_{\tau_i}(x_k)\right\|^{2}_{\infty}\right]
&\geq \sum  \limits _{\tau_1 \in \binom{[m]}{\beta_1}} \frac{1}{\binom{m}{\beta_1}}   \| f_{\tau_h}(x_k)\|_{\infty}^{2}  +\sum  \limits _{\tau_2 \in \binom{[m]-\beta_1}{\beta_2}} \frac{1}{\binom{m-\beta_1}{\beta_2}}   \| f_{\tau_h}(x_k)\|_{\infty}^{2}              \notag
\\
&~~~~+\cdots +\sum  \limits _{\tau_{ \nu} \in \binom{[m]-\beta_1-\cdots-\beta_{\nu-1}}{\beta_\nu}} \frac{1}{\binom{m-\beta_1-\cdots-\beta_{\nu-1}}{\beta_\nu}}   \| f_{\tau_h}(x_k)\|_{\infty}^{2}         \notag
\\
&=\nu \| f_{\tau_h}(x_k)\|_{\infty}^{2}    . \notag
\end{align}
Then, we have
\begin{align}
\mathbb{E}^k\left[\left\|x_{k+1}-x_{\star}\right\|^{2}_2\right]
&\leq
\left\|x_{k}-x_{\star}\right\|^{2}_2-  \alpha\nu \| f_{\tau_h}(x_k)\|_{\infty}^{2} \notag
\\
&\leq
\left\|x_{k}-x_{\star}\right\|^{2}_2-  \alpha\nu \frac{1}{|\tau_h|}\| f_{\tau_h}(x_k)\|_{2}^{2},\notag
\end{align}
which further together with (\ref{Sec2.7}) in \Cref{lemma3} gets
\begin{align}
\mathbb{E}^k\left[\left\|x_{k+1}-x_{\star}\right\|^{2}_2\right]
&\leq
\left\|x_{k}-x_{\star}\right\|^{2}_2-  \alpha\nu \frac{1}{|\tau_h|}\frac{1}{1+\eta^2}\left\|f^{\prime}_{\tau_h}\left(x_{k}\right)\left(x_{k}-x_{\star}\right)\right\|^2_2 \notag
\\
&\leq
\left\|x_{k}-x_{\star}\right\|^{2}_2-  \alpha\nu \frac{1}{|\tau_h|}\frac{1}{1+\eta^2}h_2^2(f^{\prime}_{\tau_h}\left(x_{k}\right))\left\| x_{k}-x_{\star} \right\|^2_2 \notag
\\
&=
\left(1-   \frac{\alpha\nu}{|\tau_h|}\frac{h_2^2(f^{\prime}_{\tau_h}\left(x_{k}\right))}{1+\eta^2}\right)\left\| x_{k}-x_{\star} \right\|^2_2 . \notag
\end{align}
Thus, taking expectation on both sides and using the tower rule of expectation, we get the desired result (\ref{theorem:MR-BSNK2-1}).
\end{proof}

\begin{theorem}
\label{theorem:MD-BSNK2}
If the nonlinear function $f $ satisfies the local tangential cone condition given in \Cref{definition}, $\eta=\max\limits _{i}
\eta_{i}<\frac{1}{2}$, $
\alpha=h_2^2((f_{\mathcal{J} }^{\prime}(x_k))^{\dagger})-2\eta\sigma^2_{\max}\left((f_{\mathcal{J} }^{\prime}(x_k))^{\dagger}\right)>0,$
where $h_2^2((f_{\mathcal{J}}^{\prime}(x_k))^{\dagger})= \min \limits _{\mathcal{J}_{k}} h_2^2((f_{\mathcal{J}_{k}}^{\prime}(x_k))^{\dagger})$ and $\sigma^2_{\max}\left((f_{\mathcal{J} }^{\prime}(x_k))^{\dagger}\right)= \max \limits _{\mathcal{J}_{k}}\sigma^2_{\max}\left((f_{\mathcal{J}_{k}}^{\prime}(x_k))^{\dagger}\right)$, $f(x_{\star})=0$, and $ \left\|  u_{\tau_h}(x_k)\right\|^{2}_{\infty}= \min \limits _{1\leq i\leq \nu}\left\|  u_{\tau_i}(x_k)\right\|^{2}_{\infty} $, then the iterations of the MD-BSNK2 method, i.e., the second case in \Cref{alg-The (MR/MD)-BSNK2 method}, satisfy
\begin{align}
\mathbb{E}\left[\left\| {x}_{k+1}- {x}_{\star}\right\|^{2}_2\right]
 \leq
\left(1-   \min \limits _{i\in[m]} \left\|\nabla f_{i}\left(x_{k}\right)\right\|_2^{2}\frac{\alpha\nu}{|\tau_h|}\frac{h_2^2(G_{\tau_h}\left(x_{k}\right))}{1+\eta^2}\right )
 \mathbb{E}\left[\left\|x_{k}-{x}_{\star}\right\|_2^{2}\right].\label{theorem:MD-BSNK2-1}
 \end{align}
\end{theorem}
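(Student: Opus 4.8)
The plan is to mirror the proof of \Cref{theorem:MR-BSNK2}, replacing the residual quantities by the ``distance'' quantities $u(x_k)$ and $G(x_k)$ defined in \Cref{subsec:notation}, at the cost of introducing the scalar $\min_{i\in[m]}\|\nabla f_i(x_k)\|_2^2$ to pass from $|f_j(x_k)|^2$ to $|u_j(x_k)|^2$. First I would apply \Cref{sec:theory-mutiple-lemma1} with $\tau_k=\mathcal{J}_k$; since the coefficient $h_2^2((f^{\prime}_{\mathcal{J}_k}(x_k))^{\dagger})-2\eta\sigma^2_{\max}((f^{\prime}_{\mathcal{J}_k}(x_k))^{\dagger})$ is at least $\alpha$ by the defining min/max, and $\|f_{\mathcal{J}_k}(x_k)\|_2^2=\sum_{j\in\mathcal{J}_k}|f_j(x_k)|^2$, this gives
\begin{align}
\left\|x_{k+1}-x_{\star}\right\|^{2}_2
\leq
\left\|x_{k}-x_{\star}\right\|^{2}_2-\alpha\sum_{j\in\mathcal{J}_k}\left|f_j(x_k)\right|^2 .\notag
\end{align}
Using $|f_j(x_k)|^2=\frac{|f_j(x_k)|^2}{\|\nabla f_j(x_k)\|_2^2}\|\nabla f_j(x_k)\|_2^2\geq \min_{i\in[m]}\|\nabla f_i(x_k)\|_2^2\,|u_j(x_k)|^2$, together with the fact that in case~2 of \Cref{alg-The (MR/MD)-BSNK2 method} the index $t_j$ maximizes $|u_i(x_k)|^2$ over $i\in\tau_j$ (so $|u_{t_j}(x_k)|^2=\|u_{\tau_j}(x_k)\|_\infty^2$), I would obtain
\begin{align}
\left\|x_{k+1}-x_{\star}\right\|^{2}_2
\leq
\left\|x_{k}-x_{\star}\right\|^{2}_2-\alpha\min_{i\in[m]}\|\nabla f_i(x_k)\|_2^2\sum_{i=1}^{\nu}\left\|u_{\tau_i}(x_k)\right\|_\infty^2 .\notag
\end{align}

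Next I would take the conditional expectation $\mathbb{E}^k$ and argue exactly as in the proof of \Cref{theorem:MR-BSNK2}: expanding each $\mathbb{E}^k[\|u_{\tau_i}(x_k)\|_\infty^2]$ as the uniform average over the admissible draws of $\tau_i$ and invoking the hypothesis $\|u_{\tau_h}(x_k)\|_\infty^2=\min_{1\leq i\leq\nu}\|u_{\tau_i}(x_k)\|_\infty^2$, one gets $\sum_{i=1}^{\nu}\mathbb{E}^k[\|u_{\tau_i}(x_k)\|_\infty^2]\geq\nu\|u_{\tau_h}(x_k)\|_\infty^2$. Then the elementary bound $\|u_{\tau_h}(x_k)\|_\infty^2\geq\frac{1}{|\tau_h|}\|u_{\tau_h}(x_k)\|_2^2$ yields
\begin{align}
\mathbb{E}^k\left[\left\|x_{k+1}-x_{\star}\right\|^{2}_2\right]
\leq
\left\|x_{k}-x_{\star}\right\|^{2}_2-\frac{\alpha\nu}{|\tau_h|}\min_{i\in[m]}\|\nabla f_i(x_k)\|_2^2\left\|u_{\tau_h}(x_k)\right\|_2^2 .\notag
\end{align}

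Finally, applying inequality (\ref{Sec2.7.1}) of \Cref{lemma4} with $\tau=\tau_h$ and then the definition of $h_2(\cdot)$, I would bound $\|u_{\tau_h}(x_k)\|_2^2\geq\frac{1}{1+\eta^2}\|G_{\tau_h}(x_k)(x_k-x_{\star})\|_2^2\geq\frac{h_2^2(G_{\tau_h}(x_k))}{1+\eta^2}\|x_k-x_{\star}\|_2^2$, substitute into the previous display, and take the full expectation using the tower rule to reach (\ref{theorem:MD-BSNK2-1}). The step I expect to be the main obstacle is the combinatorial averaging: making precise why, after conditioning, the block-wise minimum hypothesis $\|u_{\tau_h}(x_k)\|_\infty^2=\min_i\|u_{\tau_i}(x_k)\|_\infty^2$ legitimately produces the clean factor $\nu$. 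This is where the sampling-without-replacement structure of \Cref{alg-The (MR/MD)-BSNK2 method} matters, since the admissible index sets $\tau_j$ for $j\geq2$ depend on the earlier draws $\tau_1,\dots,\tau_{j-1}$; apart from this point, everything is a routine transcription of the MR-BSNK2 argument under $f\mapsto u$, $f^{\prime}\mapsto G$, paying the extra scalar $\min_{i\in[m]}\|\nabla f_i(x_k)\|_2^2$.
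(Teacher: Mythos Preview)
Your proposal is correct and follows essentially the same approach as the paper: apply \Cref{sec:theory-mutiple-lemma1} with the uniform lower bound $\alpha$, pass from $|f_j(x_k)|^2$ to $|u_j(x_k)|^2$ via the factor $\min_{i\in[m]}\|\nabla f_i(x_k)\|_2^2$, identify $\sum_{j\in\mathcal{J}_k}|u_j(x_k)|^2=\sum_{i=1}^{\nu}\|u_{\tau_i}(x_k)\|_\infty^2$, then take $\mathbb{E}^k$, use the $\tau_h$--minimum hypothesis to extract the factor $\nu$, bound $\|\cdot\|_\infty^2\geq\frac{1}{|\tau_h|}\|\cdot\|_2^2$, and finish with (\ref{Sec2.7.1}) and $h_2(\cdot)$. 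The combinatorial averaging step you flag as the main obstacle is handled in the paper exactly as you describe (and exactly as in the proof of \Cref{theorem:MR-BSNK2}), so no additional ingredient is needed.
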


\begin{proof}
Following the proof of \Cref{theorem:MR-BSNK1} and according to \Cref{alg-The (MR/MD)-BSNK2 method}, we have
\begin{align}
\left\|x_{k+1}-x_{\star}\right\|^{2}_2
&\leq
\left\|x_{k}-x_{\star}\right\|^{2}_2-  \alpha \sum\limits_{j\in\mathcal{J}_{k} }\left|  f_{j}(x_k)\right|^{2} \notag
\\
&=
\left\|x_{k}-x_{\star}\right\|^{2}_2-  \alpha \sum\limits_{j\in\mathcal{J}_{k} }\frac{\left|  f_{j}(x_k)\right|^{2}}{\left\|\nabla f_{j}\left(x_{k}\right)\right\|_2^{2}}\left\|\nabla f_{j}\left(x_{k}\right)\right\|_2^{2} \notag
\\
&\leq
\left\|x_{k}-x_{\star}\right\|^{2}_2-  \alpha \min \limits _{i\in[m]} \left\|\nabla f_{i}\left(x_{k}\right)\right\|_2^{2} \sum\limits_{j\in\mathcal{J}_{k} }\frac{\left|  f_{j}(x_k)\right|^{2}}{\left\|\nabla f_{j}\left(x_{k}\right)\right\|_2^{2}} \notag
\\
&\leq
\left\|x_{k}-x_{\star}\right\|^{2}_2-\alpha  \min \limits _{i\in[m]} \left\|\nabla f_{i}\left(x_{k}\right)\right\|_2^{2} \sum\limits_{j\in\mathcal{J}_{k} } \left|u_{j}(x_k)\right|^{2}  \notag
\\
&=
\left\|x_{k}-x_{\star}\right\|^{2}_2- \alpha\min \limits _{i\in[m]} \left\|\nabla f_{i}\left(x_{k}\right)\right\|_2^{2}  \sum\limits_{i=1}^{\nu}  \left\|  u_{\tau_i}(x_k)\right\|^{2}_{\infty}. \notag
\end{align}
Now, taking the expectation on both sides conditioned on $x_{k}$, we have
\begin{align}
\mathbb{E}^k\left[\left\|x_{k+1}-x_{\star}\right\|^{2}_2\right]
&\leq
\left\|x_{k}-x_{\star}\right\|^{2}_2- \alpha\min \limits _{i\in[m]} \left\|\nabla f_{i}\left(x_{k}\right)\right\|_2^{2} \mathbb{E}^k\left[ \sum\limits_{i=1}^{\nu}  \left\|  u_{\tau_i}(x_k)\right\|^{2}_{\infty}\right] \notag
\\
&=
\left\|x_{k}-x_{\star}\right\|^{2}_2-  \alpha\min \limits _{i\in[m]} \left\|\nabla f_{i}\left(x_{k}\right)\right\|_2^{2} \sum\limits_{i=1}^{\nu} \mathbb{E}^k\left[ \left\|  u_{\tau_i}(x_k)\right\|^{2}_{\infty}\right]. \notag
\end{align}
Noting that
\begin{align}
\sum\limits_{i=1}^{\nu} \mathbb{E}^k\left[ \left\|  u_{\tau_i}(x_k)\right\|^{2}_{\infty}\right]
&= \mathbb{E}^k[\|  u_{\tau_1}(x_k) \|_{\infty}^{2}]  +\mathbb{E}^k[ \|u_{\tau_2}(x_k) \|_{\infty}^{2}]+\cdots +\mathbb{E}^k [\|u_{\tau_\nu}(x_k) \|_{\infty}^{2}]\notag
\\
&=  \sum  \limits _{\tau_1 \in \binom{[m]}{\beta_1}} \frac{1}{\binom{m}{\beta_1}}   \| u_{\tau_1}(x_k)\|_{\infty}^{2}  +\sum  \limits _{\tau_2 \in \binom{[m]-\beta_1}{\beta_2}} \frac{1}{\binom{m-\beta_1}{\beta_2}}   \| u_{\tau_2}(x_k)\|_{\infty}^{2}              \notag
\\
&~~~~+\cdots +\sum  \limits _{\tau_{ \nu} \in \binom{[m]-\beta_1-\cdots-\beta_{\nu-1}}{\beta_\nu}} \frac{1}{\binom{m-\beta_1-\cdots-\beta_{\nu-1}}{\beta_\nu}}   \| u_{\tau_\nu}(x_k)\|_{\infty}^{2},  \notag
\end{align}
which together with the fact $\left\|  u_{\tau_h}(x_k)\right\|^{2}_{\infty}= \min \limits _{1\leq i\leq \nu}\left\|  u_{\tau_i}(x_k)\right\|^{2}_{\infty}$ gives
\begin{align}
\sum\limits_{i=1}^{\nu} \mathbb{E}^k\left[ \left\|  u_{\tau_i}(x_k)\right\|^{2}_{\infty}\right]
&\geq \sum  \limits _{\tau_1 \in \binom{[m]}{\beta_1}} \frac{1}{\binom{m}{\beta_1}}   \| u_{\tau_h}(x_k)\|_{\infty}^{2}  +\sum  \limits _{\tau_2 \in \binom{[m]-\beta_1}{\beta_2}} \frac{1}{\binom{m-\beta_1}{\beta_2}}   \| u_{\tau_h}(x_k)\|_{\infty}^{2}              \notag
\\
&~~~~+\cdots +\sum  \limits _{\tau_{ \nu} \in \binom{[m]-\beta_1-\cdots-\beta_{\nu-1}}{\beta_\nu}} \frac{1}{\binom{m-\beta_1-\cdots-\beta_{\nu-1}}{\beta_\nu}}   \| u_{\tau_h}(x_k)\|_{\infty}^{2}         \notag
\\
&=\nu \| u_{\tau_h}(x_k)\|_{\infty}^{2}    . \notag
\end{align}
Then, we have
\begin{align}
\mathbb{E}^k\left[\left\|x_{k+1}-x_{\star}\right\|^{2}_2\right]
&\leq
\left\|x_{k}-x_{\star}\right\|^{2}_2-  \alpha\min \limits _{i\in[m]} \left\|\nabla f_{i}\left(x_{k}\right)\right\|_2^{2} \nu \| u_{\tau_h}(x_k)\|_{\infty}^{2} \notag
\\
&\leq
\left\|x_{k}-x_{\star}\right\|^{2}_2-  \alpha\min \limits _{i\in[m]} \left\|\nabla f_{i}\left(x_{k}\right)\right\|_2^{2} \nu \frac{1}{|\tau_h|}\| u_{\tau_h}(x_k)\|_{2}^{2},\notag
\end{align}
which further together with (\ref{Sec2.7.1}) in \Cref{lemma4} gives
\begin{align}
\mathbb{E}^k\left[\left\|x_{k+1}-x_{\star}\right\|^{2}_2\right]
&\leq
\left\|x_{k}-x_{\star}\right\|^{2}_2-  \alpha\min \limits _{i\in[m]} \left\|\nabla f_{i}\left(x_{k}\right)\right\|_2^{2} \nu \frac{1}{|\tau_h|}\frac{1}{1+\eta^2}\left\|G_{\tau_h}\left(x_k\right)\left(x_k-x_{\star}\right)\right\|^2_2 \notag
\\
&\leq
\left\|x_{k}-x_{\star}\right\|^{2}_2-  \alpha\min \limits _{i\in[m]} \left\|\nabla f_{i}\left(x_{k}\right)\right\|_2^{2} \nu \frac{1}{|\tau_h|}\frac{1}{1+\eta^2}h_2^2(G_{\tau_h}\left(x_{k}\right))\left\| x_{k}-x_{\star} \right\|^2_2 \notag
\\
&=
\left(1-   \min \limits _{i\in[m]} \left\|\nabla f_{i}\left(x_{k}\right)\right\|_2^{2}\frac{\alpha\nu}{|\tau_h|}\frac{h_2^2(G_{\tau_h}\left(x_{k}\right))}{1+\eta^2}\right)\left\| x_{k}-x_{\star} \right\|^2_2 . \notag
\end{align}
Thus, taking expectation on both sides and using the tower rule of expectation, we get the desired result (\ref{theorem:MD-BSNK2-1}).
\end{proof}

\begin{remark}
\label{remark_th_MD-BSNK2:1}
According to \Cref{theorem:MR-BSNK2,theorem:MD-BSNK2}, we know that the convergence factors of the MR-BSNK2 and MD-BSNK2 methods are
$$
\rho_{\text{MR-BSNK2}}= 1-   \frac{\alpha\nu}{|\tau_h|}\frac{ h_2^2(f^{\prime}_{\tau_h}\left(x_{k}\right))}{1+\eta^2},
\quad
\text{and}
\quad
\rho_{\text{MD-BSNK2}}= 1-   \frac{\alpha\nu}{|\tau_h|}\frac{\min \limits _{i\in[m]} \left\|\nabla f_{i}\left(x_{k}\right)\right\|_2^{2} h_2^2(G_{\tau_h}\left(x_{k}\right))}{1+\eta^2} ,
$$
respectively.
Then, similar to the deduction in \Cref{remark_th_MD-BSNK1-1}, we can conclude that the convergence factor of the MR-BSNK2 method is smaller than that of the MD-BSNK2 method.
\end{remark}

\section{Experimental results}
\label{sec:experiments}
In this section, we investigate the convergence behavior of the proposed methods, i.e., the MR-SNK, MD-SNK, MR-BSNK1, MD-BSNK1, MR-BSNK2 and MD-BSNK2 methods, in solving the problems including brown almost linear function and generalized linear model (GLM).  We mainly compare our six methods with the existing methods in terms of the iteration numbers (denoted as ``IT'') and computing time in seconds (denoted as ``CPU''). Here, it should be pointed out that IT and CPU are the averages of IT and CPU of ten runs of algorithm. All experiments terminate once the residual at $x_{k}$, defined by $ \left\|f(x_{k})\right\|_2^2 $ is less than $10^{-6}$, or the number of iterations exceeds 200000.

\subsection{Brown almost linear function}
The function has the following form
\label{sec:Brown almost linear function}
$$
\begin{array}{lr}
f_k(x)=x^{(k)}+\sum_{i=1}^n x^{(i)}-(n+1), & 1 \leq k<n;\\
f_k(x)=\left(\prod_{i=1}^n x^{(i)}\right)-1, & k=n,
\end{array}
$$
which can be found in \cite{more1981testing, wang2022nonlinear}. Since the authors in \cite{wang2022nonlinear} concluded that the convergence behavior of the NRK method outperforms the NURK and SGD methods, we only need to compare our methods with the NRK method and all experiments are initial from $x_{0}=0.5*ones(n,1)$.
\subsubsection{The impact of parameters $\beta$ and $\nu$ on our methods}
\label{sec:The impact of parameters}
From \Cref{alg2,alg-The (MR/MD)-BSNK1 method,alg-The (MR/MD)-BSNK2 method}, we know that two parameters $\beta$ and $\nu$ affect the numerical results of our methods. With this in mind, we first show in \Cref{fig_variation_beta} how the parameter $\beta$ impacts the MR-SNK, MD-SNK, MR-BSNK1 and MD-BSNK1 methods and show in \Cref{fig_variation_nu} how sensitive the MR-BSNK2 and MD-BSNK2 methods are for variation of parameter $\nu$.
\begin{figure}[ht]
 \begin{center}
\includegraphics [height=3.8cm,width=9cm ]{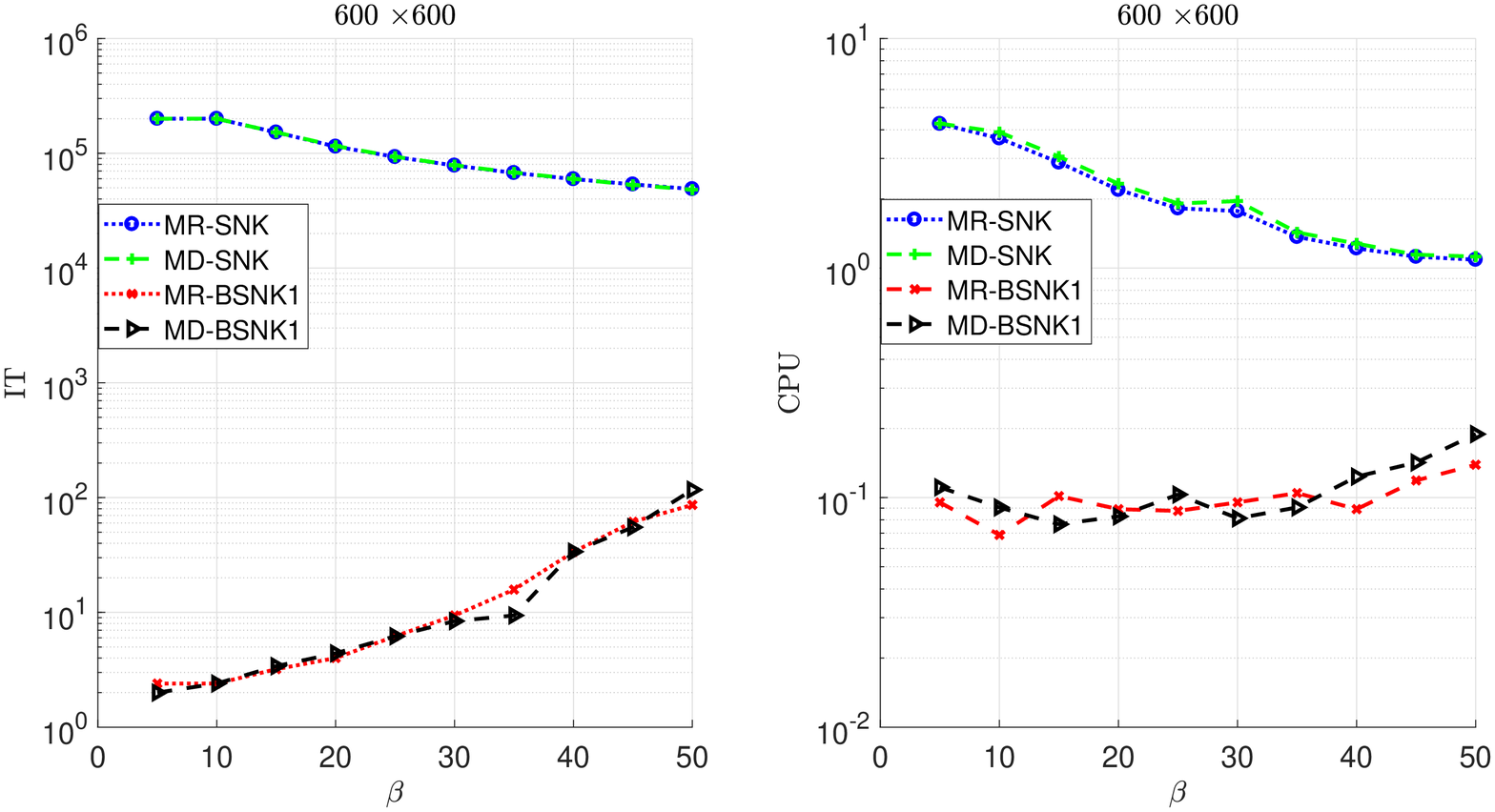}
 \end{center}
\caption{IT (left) and CPU (right) versus $\beta$  for the MR-SNK, MD-SNK, MR-BSNK1 and MD-BSNK1 methods.}\label{fig_variation_beta}
\end{figure}
\begin{figure}[ht]
 \begin{center}
\includegraphics [height=3.8cm,width=9cm ]{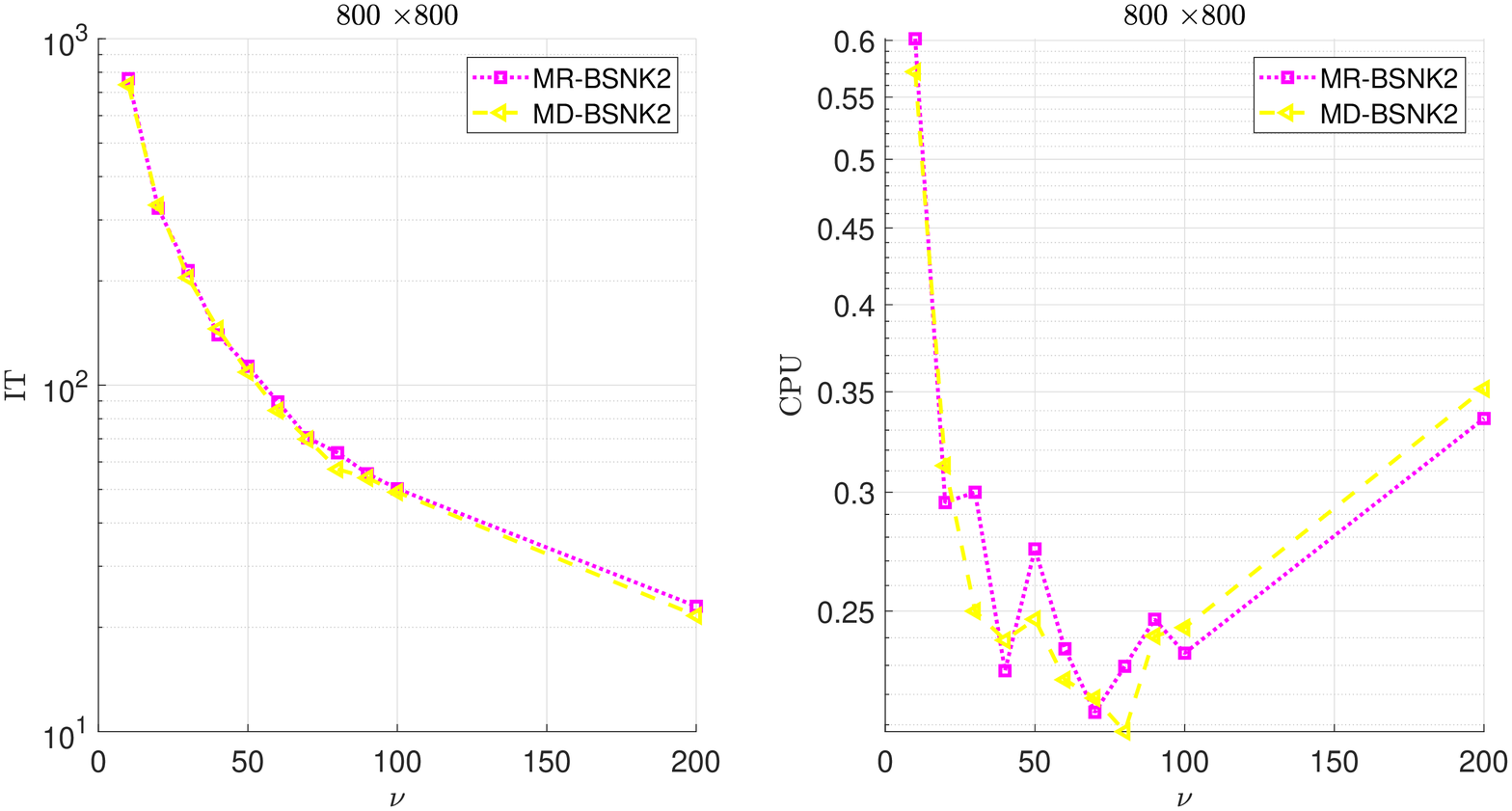}
 \end{center}
\caption{IT (left) and CPU (right) versus $\nu$ for the MR-BSNK2 and MD-BSNK2 methods.}\label{fig_variation_nu}
\end{figure}

In \Cref{fig_variation_beta}, both IT and CPU of the MR-SNK and MD-SNK methods drop as $\beta$ increases from 5 to 50. Meanwhile, we find that the iteration numbers of the MR-BSNK1 and MD-BSNK1 methods increase as $\beta$ increases, while their CPU times first reach their lowest point at $\beta=10$ and $\beta=15$, respectively, and then keep increasing as $\beta$ increases with some mild rebound. In addition, 
two multiple samples-based block methods outperform the two single sample-based methods in terms of iteration numbers and CPU time as expected.

Intuitively, as the sample size used in each iteration increases, the number of iteration steps is expected to decrease, which fits well with \Cref{fig_variation_nu} and is also our motivation for introducing block updating. However, the CPU times of the MR-BSNK2 and MD-BSNK2 methods exhibit a U-shaped curve as $\nu$ increases. That is, 
it first declines and then rises, albeit with some mild rebound. Actually, if the block size $\nu$ is small, then these two methods need more iteration numbers to achieve the desired accuracy, which in turn requires more CPU times. On the contrary, if the parameter $\nu$ is large, then more CPU time is needed to spend on each updating step. Hence although the number of iterations is relatively small, the total time is still high.

In a word, choosing a suitable parameter $\beta$ or $\nu$ is crucial for our methods. Thus, how to select a optimal parameter is a very interesting research topic, but the corresponding theoretical analysis is currently unavailable, which can be regarded as a future work.

\subsubsection{Comparison of our methods and the NRK method}
\label{sec: Comparison of our methods and the NRK method}
In this subsection, we demonstrate the effectiveness of our proposed methods by comparing them with the NRK method. The iteration numbers and CPU time for these methods are presented in \Cref{tab_Brown_IT,tab_Brown_CPU}, respectively. As shown in \Cref{tab_Brown_IT,tab_Brown_CPU}, although the optimal parameters are not always selected during the experiments, our methods, i.e., the MR-SNK, MD-SNK, MR-BSNK1, MD-BSNK1, MR-BSNK2 and MD-BSNK2 methods, usually require less iteration numbers and computing time than that of the NRK method. Moreover, the block methods are always outperform the single sample-based methods in terms of iteration numbers and CPU time. Most interesting, the block methods even achieve a time speedup of 200 times compared to the NRK method, as shown in the last row of \Cref{tab_Brown_CPU}.
\begin{table}[tp]
  \centering
  \fontsize{6.7}{6.7} \selectfont
    \caption{IT comparison of our methods and the NRK method.}
    \label{tab_Brown_IT}
    \begin{tabular}{ccccccccccc}
    \hline
    $m \times n$    &$\beta$ & $ \nu $&NRK& MR-SNK&  MD-SNK& MR-BSNK1& MD-BSNK1&  MR-BSNK2 & MD-BSNK2\cr
 \hline
$50\times 50$   & 5 & 5 &4738.5 & 3545.1 & 3430.3 & 117.4 & 172.4 & 111.6 &113.4   \cr
$80\times 80$   &5 & 5 &1073.2 & 8327.1 & 8565.5 &     93&30.1 & 171.4 & 188.7 \cr
$150\times 150$ & 10 & 10 & 33810 &15093&15074& 4.1& 6.2 &140.6& 147 \cr
$200\times 200$ &10 & 10&56954&26135&26309&3.3&3.6&177.8& 187.6 \cr
$400\times 400$ &20& 20 & 197486 & 52110& 52278     &     9.4& 3   & 157  &146.8 \cr\hline
\end{tabular}
\end{table}

\begin{table}[tp]
  \centering
  \fontsize{6.7}{6.7} \selectfont
    \caption{CPU comparison of our methods and the NRK method.}
    \label{tab_Brown_CPU}
    \begin{tabular}{ccccccccccc}
    \hline
    $m \times n$    &$\beta$ & $ \nu $   &NRK& MR-SNK&  MD-SNK& MR-BSNK1& MD-BSNK1&  MR-BSNK2 & MD-BSNK2\cr
 \hline
$50\times 50$   &5 & 5 & 0.3359&0.0688&0.0594&0.0500&0.0547&0.0422&0.0297 \cr
$80\times 80$   &   5 & 5  & 0.5000&0.1437&0.1469&0.0453&0.0203&0.0531&0.0266  \cr
$150\times 150$ &10 & 10   & 1.3047&0.2422& 0.2578&0.0297&0.0219&0.0422&0.0469 \cr
$200\times 200$ &10 & 10     & 2.5328&0.5656&0.4953&0.0047& 0.0125&0.0578&0.0688  \cr
$400\times 400$ &  20 & 20  &    8.0906&0.8313&0.9500&0.0484 &0.0437&0.0750&0.0703 \cr\hline
\end{tabular}
\end{table}

\subsection{GLM}
\label{sec:logistic regression}
Here we compare our methods against the sketched Newton-Raphson (SNR) \cite{yuan2022sketched} for solving regularized GLM, i.e., $w^*=\arg\min \limits _{w \in \mathbb{R}^d} P(w) \stackrel{\text { def }}{=} \frac{1}{p} \sum_{i=1}^p \phi_i\left(a_i^{\top} w\right)+\frac{\lambda}{2}\|w\|^2$, where $\phi_i(t)=\ln \left(1+\mathrm{e}^{-y_i t}\right)$ is the logistic loss, $y_i\in\{-1, 1\}$ is the $i$th target value, $a_i \in \mathbb{R}^d$ are data samples for $i=1, \cdots, p$, and $w \in \mathbb{R}^d$ is the parameter to optimize. Using the processing procedure in \cite{yuan2022sketched}, the original GLM can be equivalently transformed into 
solving the following nonlinear problem

$$
f(x) \stackrel{\text { def }}{=}\left[\begin{array}{c}
\frac{1}{\lambda p} A \alpha-w \\
\alpha+\Phi(w)
\end{array}\right]=0
,$$
where $f: \mathbb{R}^{p+d} \rightarrow \mathbb{R}^{p+d}$, $x=\left[\begin{array}{c}
 \alpha \\
w
\end{array}\right]\in \mathbb{R}^{p+d} $, $A \stackrel{\text { def }}{=}\left[\begin{array}{lll}a_1 & \cdots & a_p\end{array}\right] \in \mathbb{R}^{d \times p}$, $\Phi(w) \stackrel{\text { def }}{=}\left[\phi_1^{\prime}\left(a_1^{T} w\right) \cdots \phi_p^{\prime}\left(a_p^{T} w\right)\right]^{T} \in \mathbb{R}^p$ and $\alpha=-\Phi(w)$. The advantage of this transformation is to ensure the sampling of a single sample and avoid a full passes over the data.

All datasets applied for GLM are 
obtained from \cite{chang2011libsvm} on \url{https://www.csie.ntu.edu.tw/~cjlin/libsvmtools/datasets/} and the scaled versions are used if provided. These datasets are either ill or well conditioned, dense or sparse and their details including smoothness constant $L$ of the model, density and condition number of the data matrix $A\in \mathbb{R}^{d\times p}$ are provided in \Cref{tab_properties of dataset}. Here smoothness constant $L$ is defined by $L\stackrel{\text { def }}{=}\frac{ \lambda_{\max}(AA^T)}{4p}+\lambda$ and density is computed by $$\text{density}\stackrel{\text { def }}{=}\frac{\text{number of nonzero of a $d\times p$ data matrix $A$}}{dp}.$$ For all methods, we use $\lambda=\frac{1}{p}$ as the regularization parameter and let the initial value $x_0$ to be zero, i.e., $w_0 =0\in \mathbb{R}^d$ and $\alpha_0 =0\in \mathbb{R}^p$.
\begin{table}[]
\centering
   \fontsize{8}{8}\selectfont
       \caption{Details of the data sets for GLM.}
    \label{tab_properties of dataset}
    \begin{tabular}{ c| c c c c  c }
 \hline
   dataset       &dimension $(d)$   &  samples $(p)$    & $L$    & condition number  &density\cr \hline
   fourclass     &     2            &  862              & 0.0824        &  1.0757           &0.9959\cr
   german.numer  &    24            & 1000              &  2.1113       & 15.4082           &0.9584\cr
   heart         &    13            &  270              & 0.6973        &  7.0996           &0.9624\cr
   ionosphere    &    34            &  351              &1.5290         & 2.4485e+17        &0.8841\cr
   splice        &    60            & 1000              & 0.4349        &  2.6377           &1\cr
   sonar         &    60            & 208               & 3.2282        &  89.9388          &0.9999\cr
   w3a           &    300           & 4912              & 0.6436        &3.9777e+33         &0.0388\cr
   w4a           &    300           & 7366              & 0.6385        & 1.0116e+34        &0.0389\cr
   \hline
\end{tabular}
\end{table}

To make a more reasonable and fair comparison with the SNR method, we will discuss the convergence behavior of single-sample iteration and multiple-sample iteration respectively. Specifically, the iteration numbers and computing time of the MR-SNK, MD-SNK and Kaczmarz-TCS methods are compared in \Cref{fig_single_sample_IT,fig_single_sample_CPU}, respectively. Here the Kaczmarz-TCS method is a special variation of the SNR method and its detailed implementation can be found in Algorithm 3 in \cite{yuan2020sketched}. In \Cref{fig_multi_samples}, we compare the numerical results of block methods, i.e., the MR-BSNK1, MD-BSNK1, MR-BSNK2, MD-BSNK2 and Block TCS methods, where the last method is a block version of the Kaczmarz-TCS method; refer to Algorithm 4 in \cite{yuan2020sketched} for more details. In the specific experiments, for the Block TCS method, we set $\gamma=1$, $\tau_d=d$, $\tau_p=150$ and the Bernoulli parameter $b=\frac{p}{(p+\tau_p)}-0.11$. Here, it is worth noting that, in order to demonstrate the advantage of greedy sampling over uniform sampling and the better use of the structure of the function $f$, our block methods adopts the same iterative framework as the Block TCS method. That is, for the first $d$ rows of $f$, we compute its least norm solution directly, which is the same as the Block TCS method because $\tau_d=d$; while for the last $p$ rows of $f$, we adopt greedy updating strategies discussed in \Cref{alg-The (MR/MD)-BSNK1 method,alg-The (MR/MD)-BSNK2 method} but the Block TCS method uses uniform sampling.
\begin{figure}[ht]
 \begin{center}
\includegraphics [height=3.8cm,width=4.5cm ]{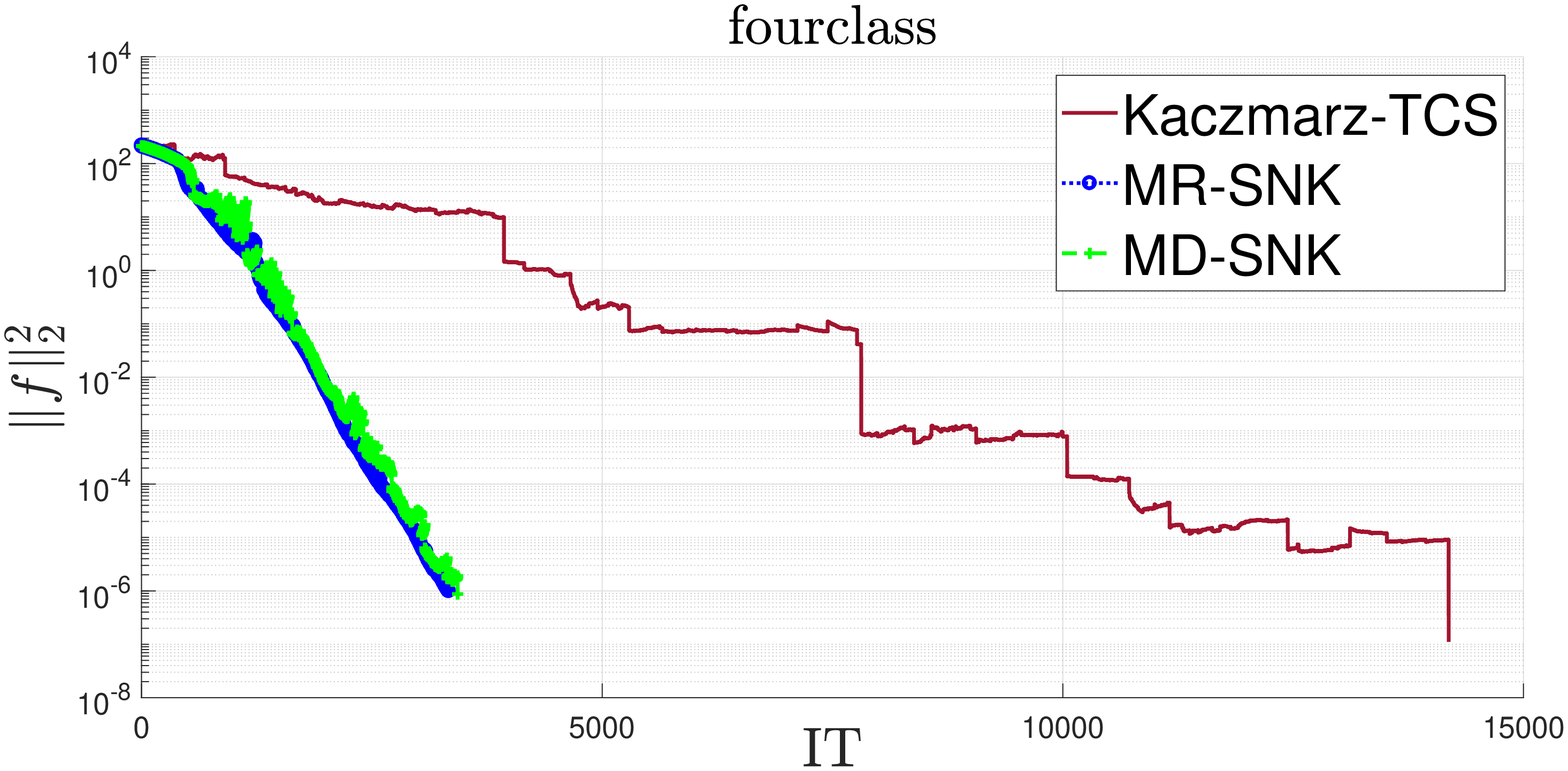}
\includegraphics [height=3.8cm,width=4.5cm ]{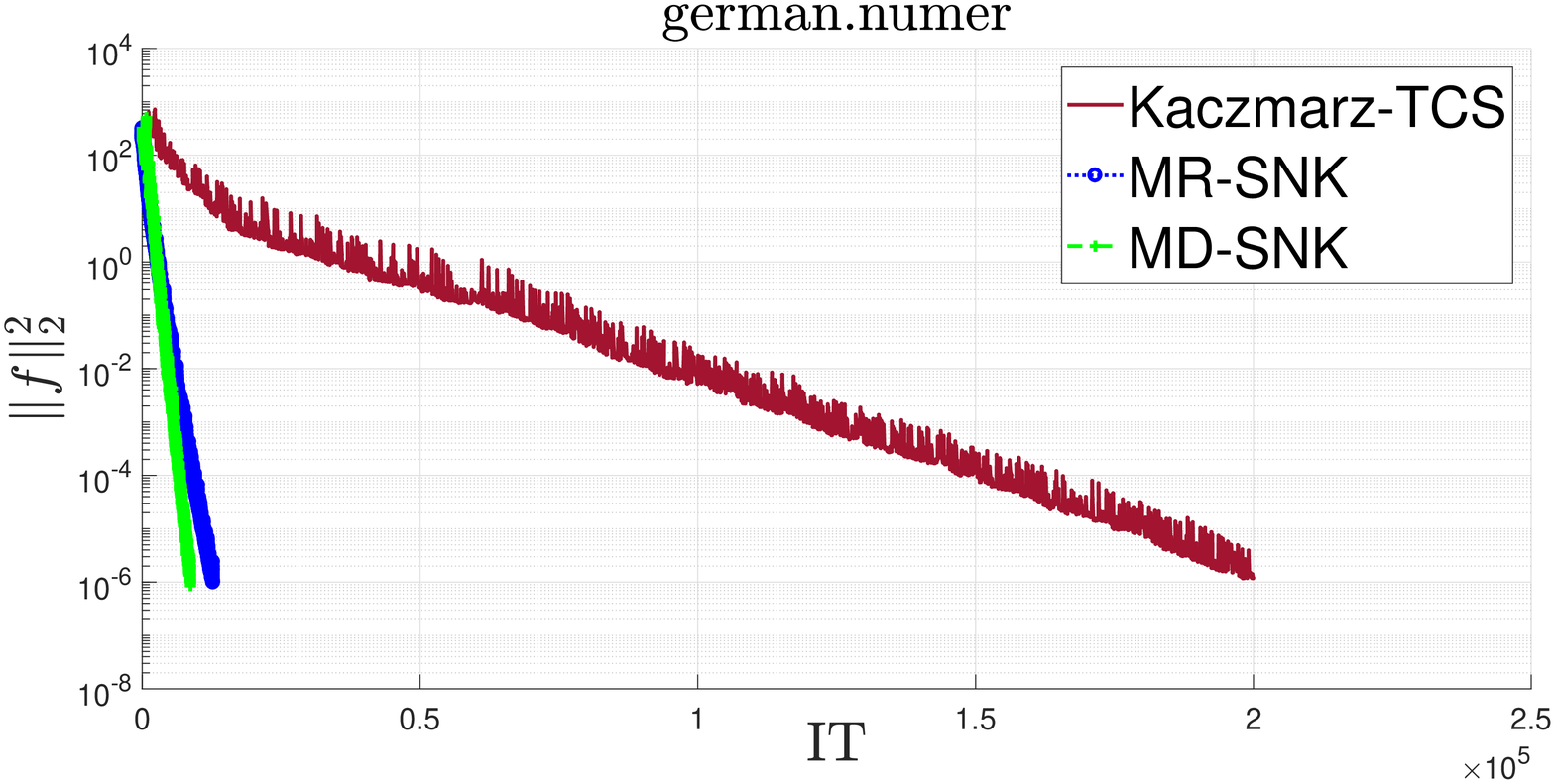}
\includegraphics [height=3.8cm,width=4.5cm ]{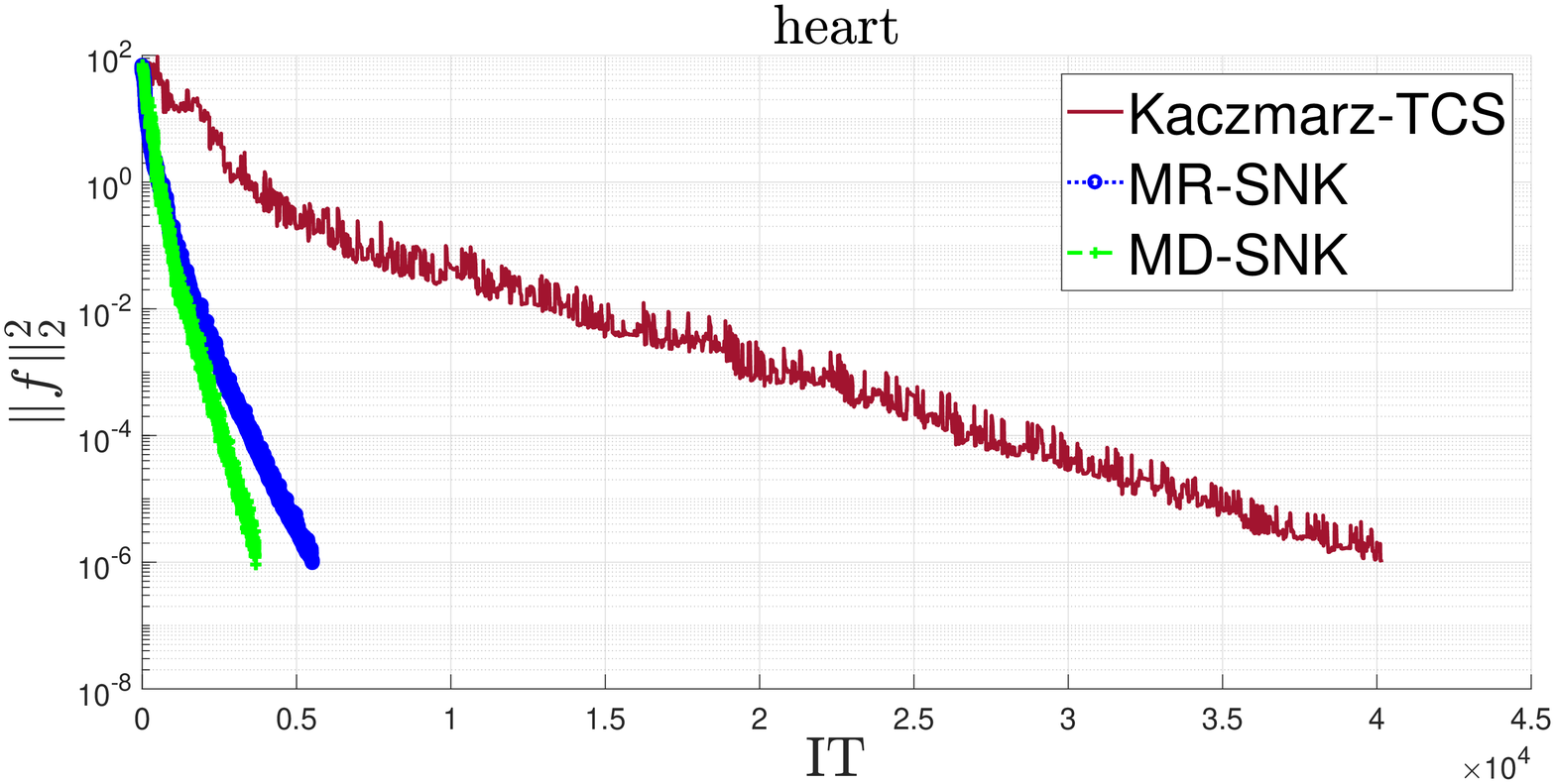}
\includegraphics [height=3.8cm,width=4.5cm ]{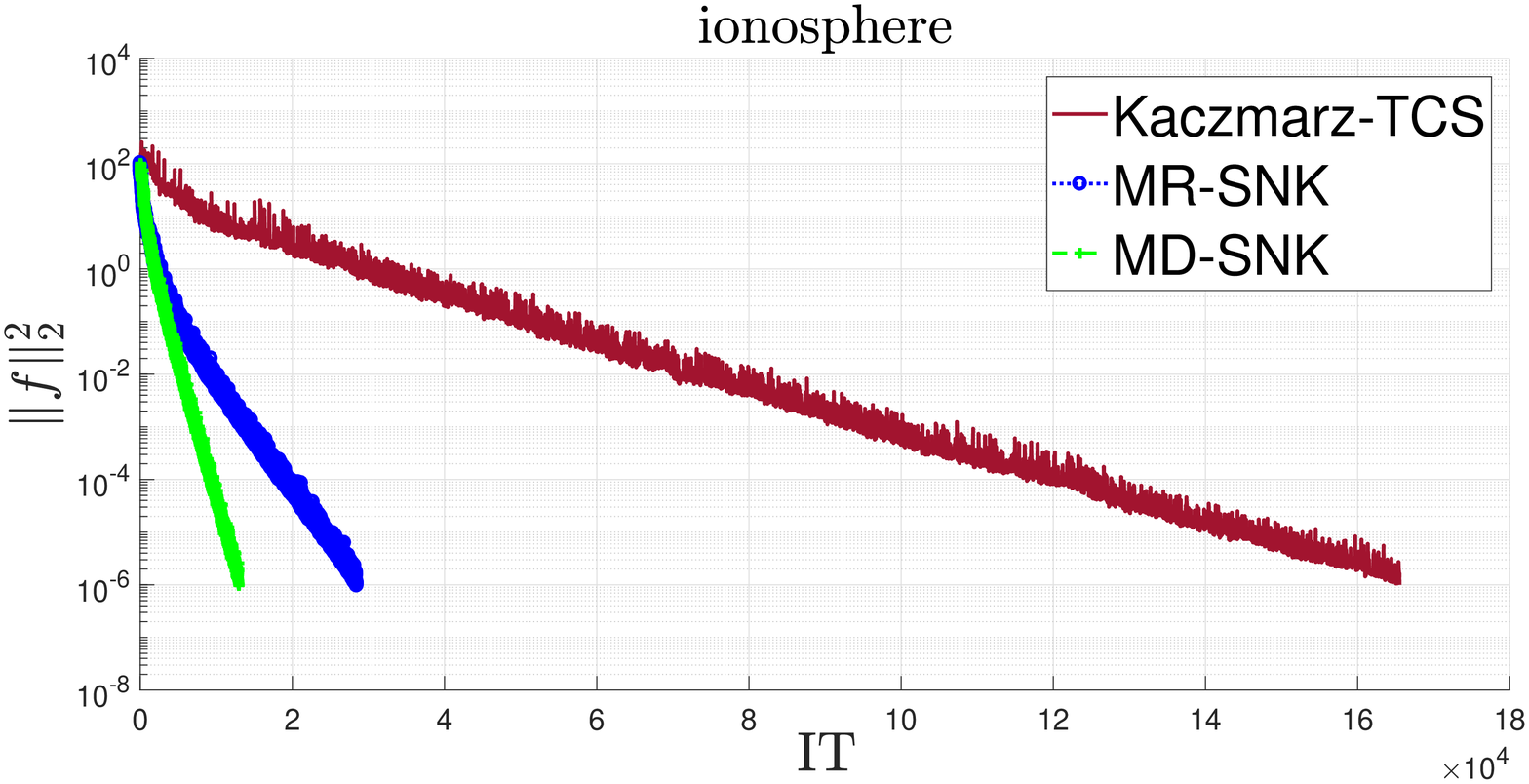}
\includegraphics [height=3.8cm,width=4.5cm ]{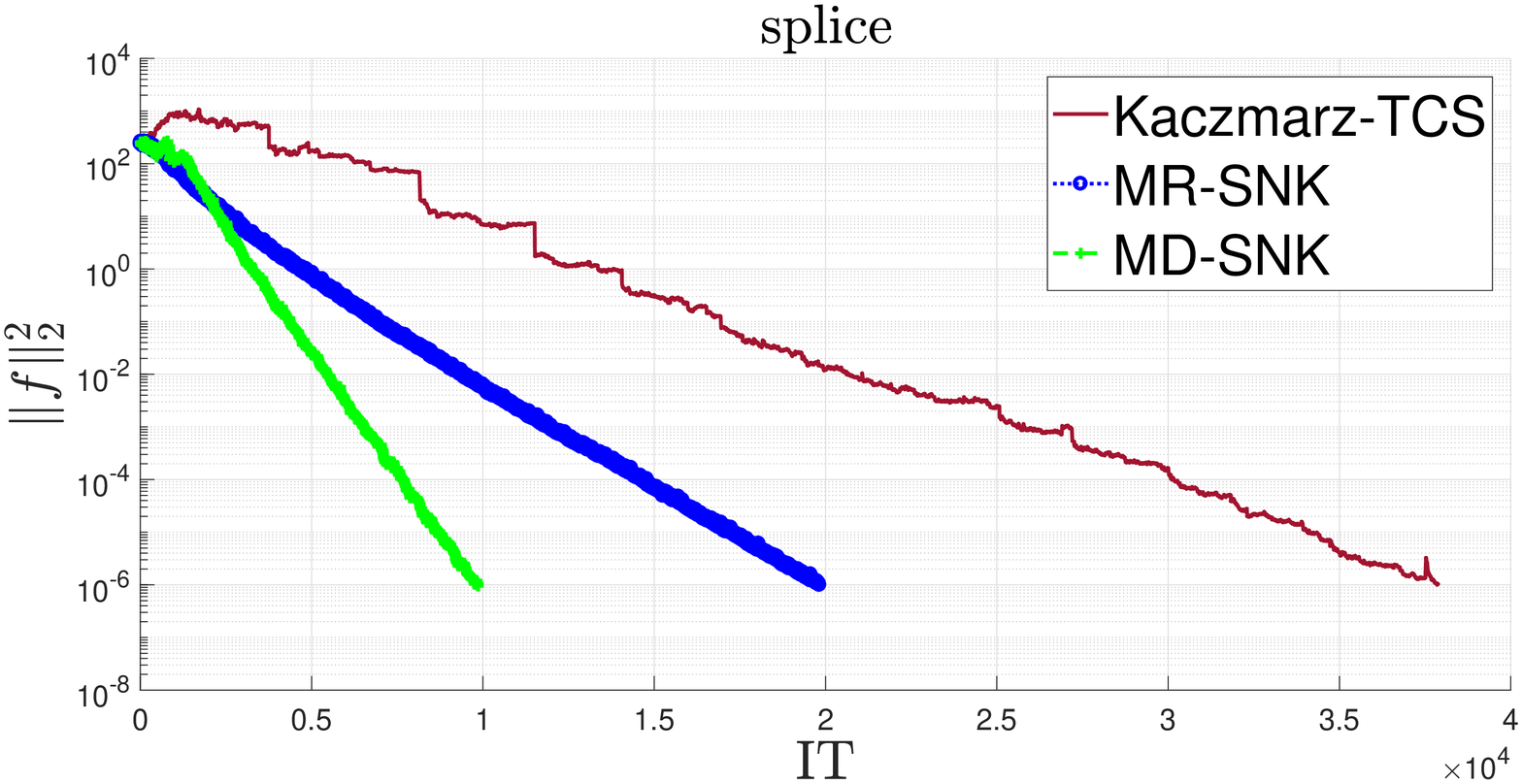}
\includegraphics [height=3.8cm,width=4.5cm ]{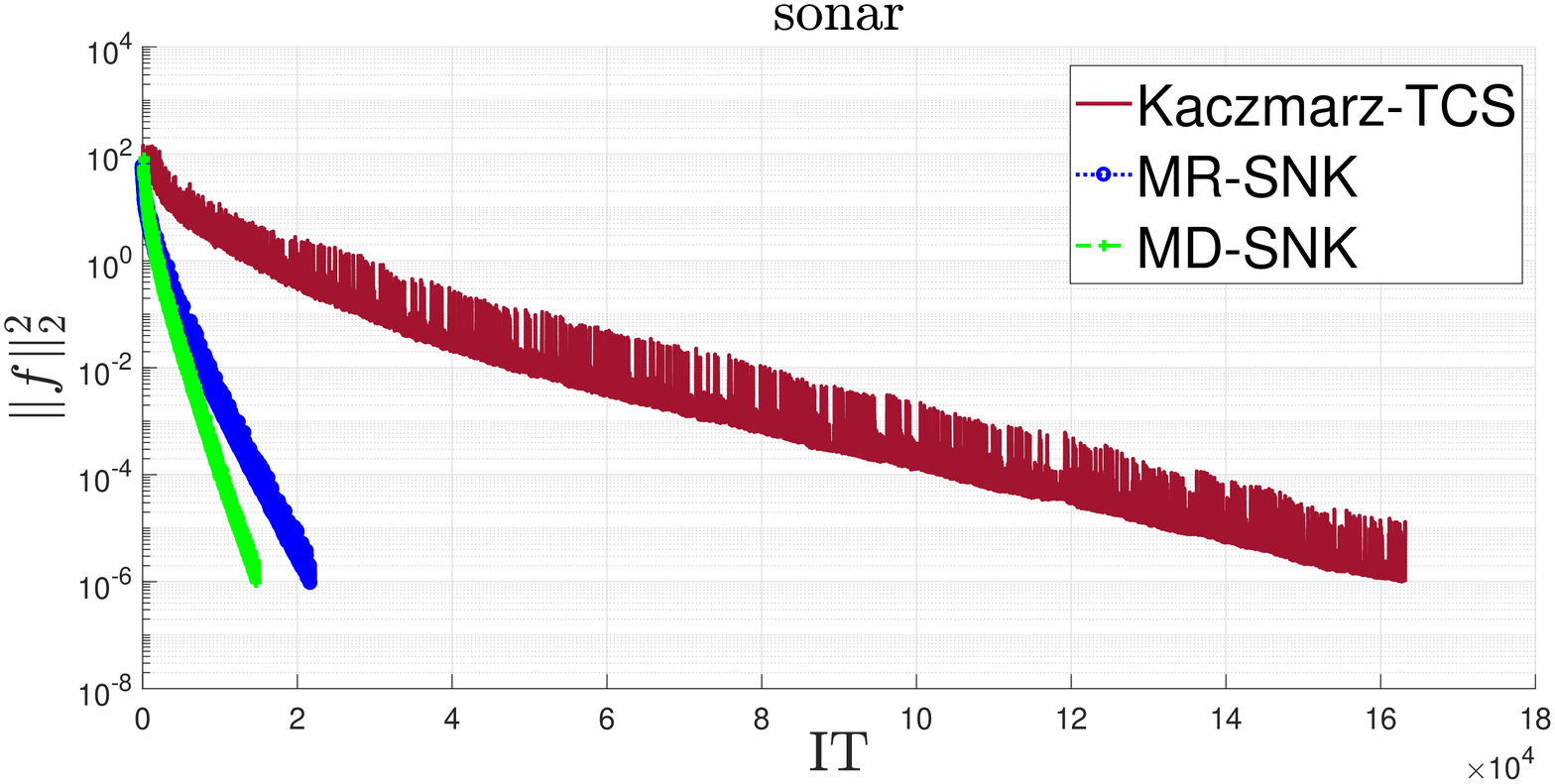}
 \end{center}
\caption{$\|f\|_2^2$ versus IT for the Kaczmarz-TCS, MR-SNK and MD-SNK methods with $\beta=80$.}\label{fig_single_sample_IT}
\end{figure}
\begin{figure}[ht]
 \begin{center}
\includegraphics [height=3.8cm,width=4.5cm ]{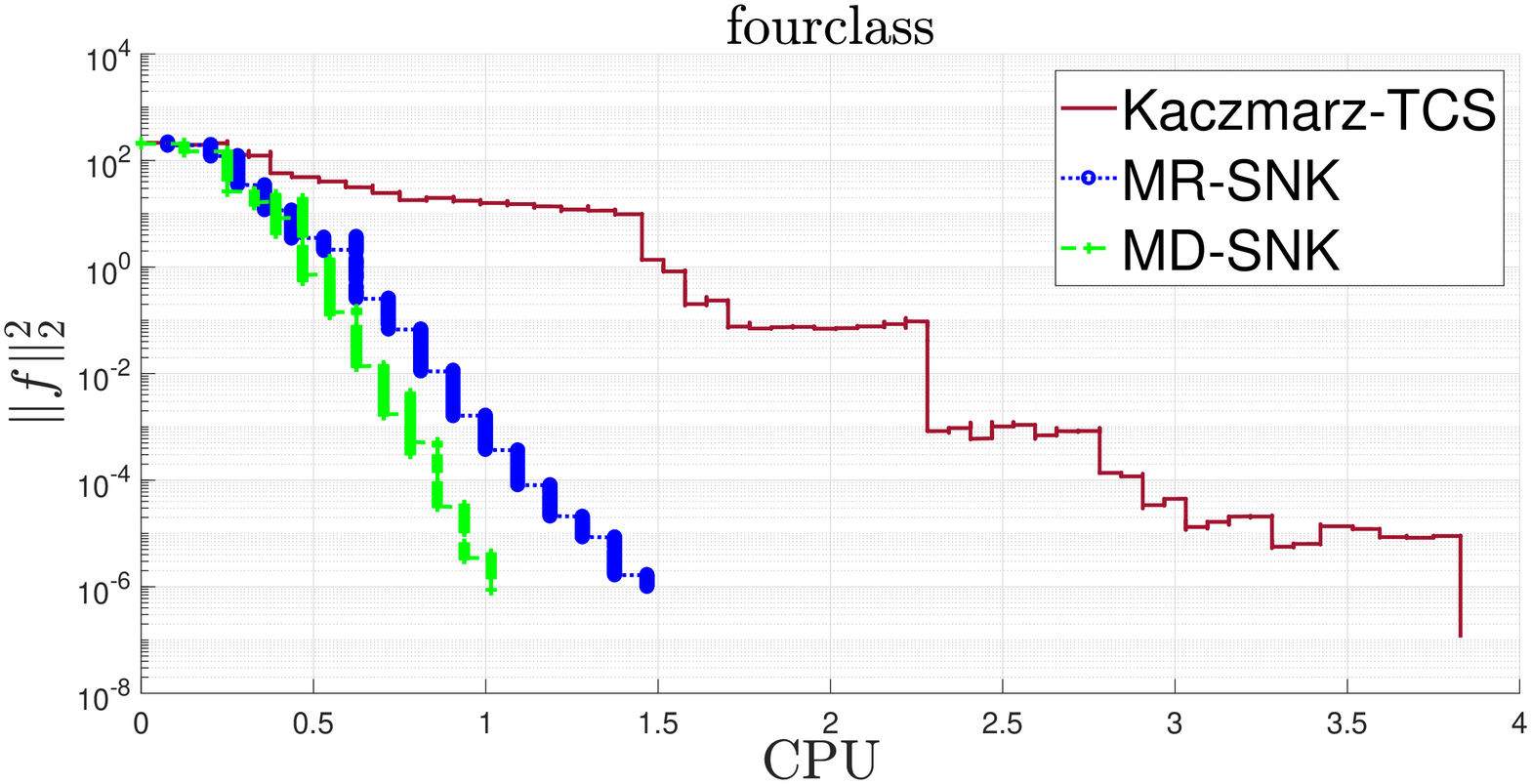}
\includegraphics [height=3.8cm,width=4.5cm ]{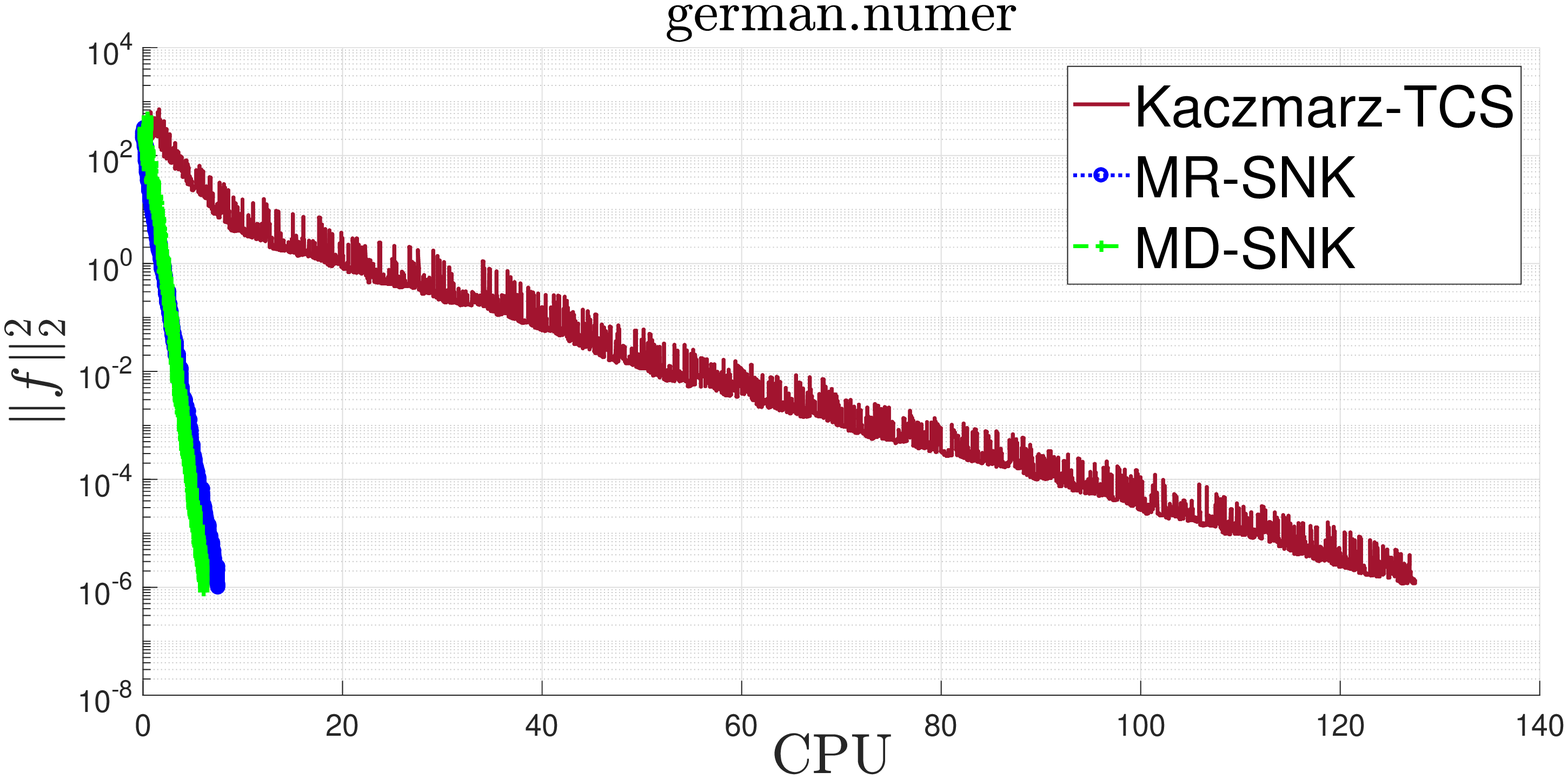}
\includegraphics [height=3.8cm,width=4.5cm ]{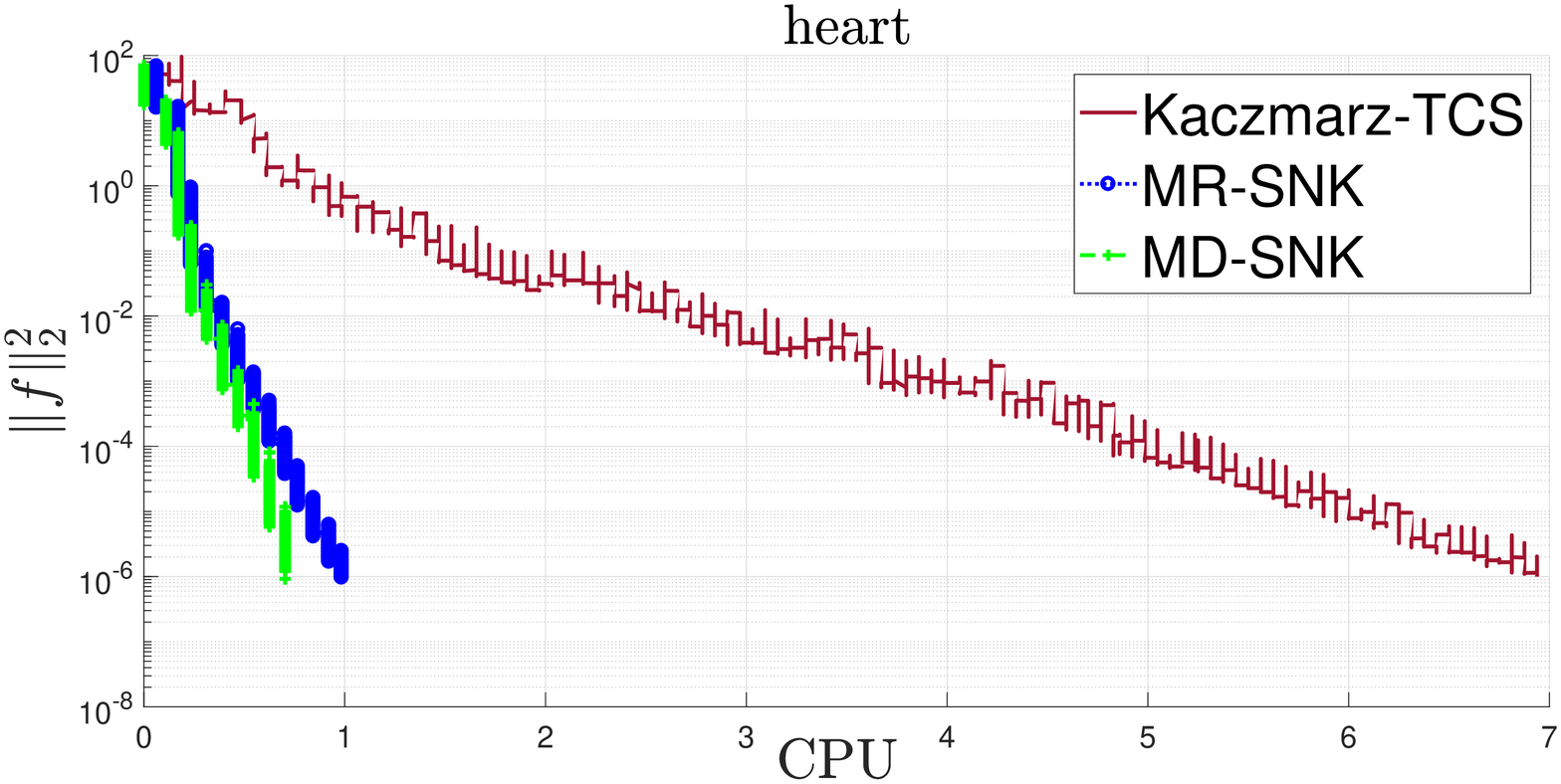}
\includegraphics [height=3.8cm,width=4.5cm ]{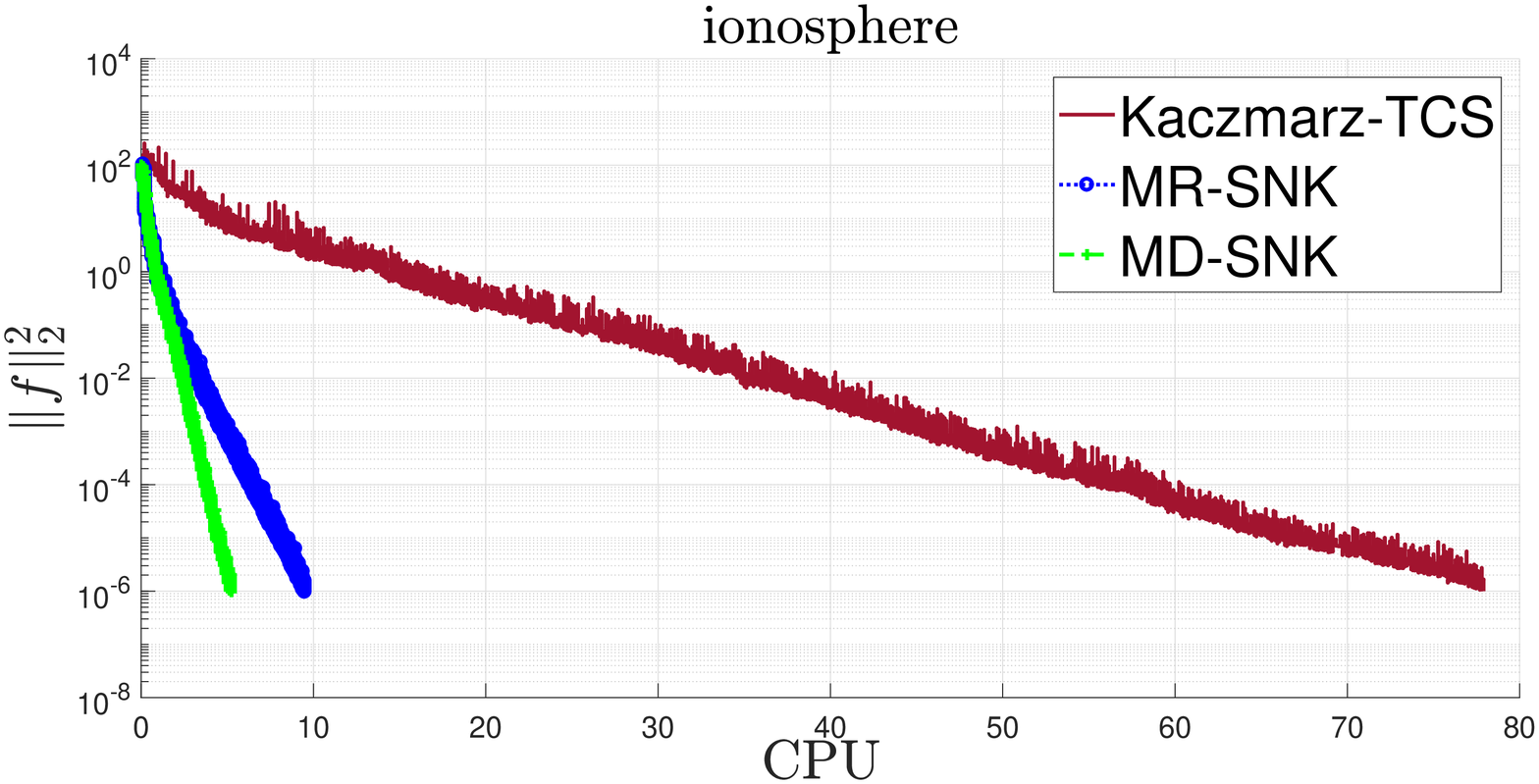}
\includegraphics [height=3.8cm,width=4.5cm ]{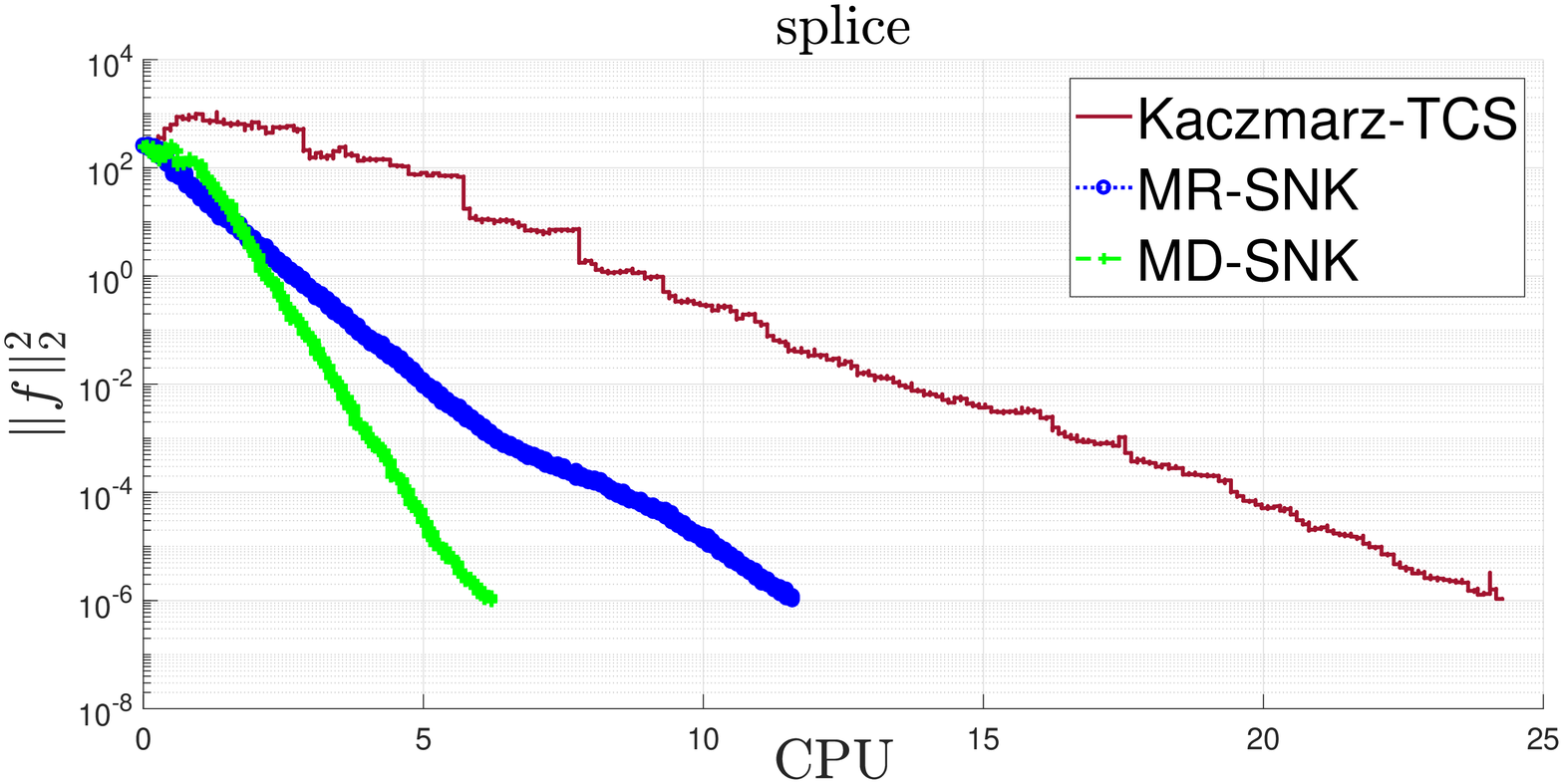}
\includegraphics [height=3.8cm,width=4.5cm ]{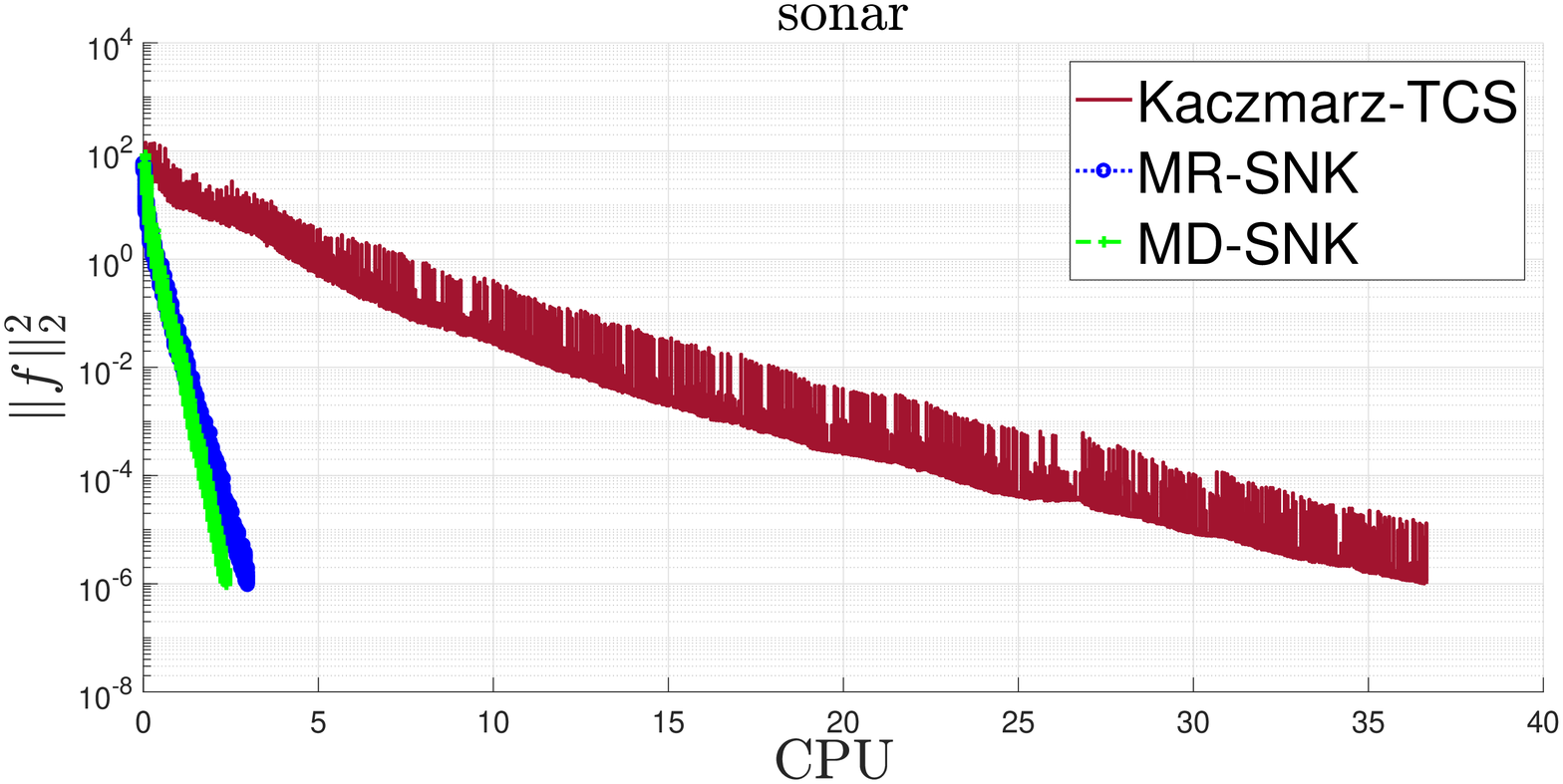}
 \end{center}
\caption{$\|f\|_2^2$ versus CPU for the Kaczmarz-TCS, MR-SNK and MD-SNK methods with $\beta=80$.}\label{fig_single_sample_CPU}
\end{figure}
\begin{figure}[ht]
 \begin{center}
\includegraphics [height=3.5cm,width=7cm ]{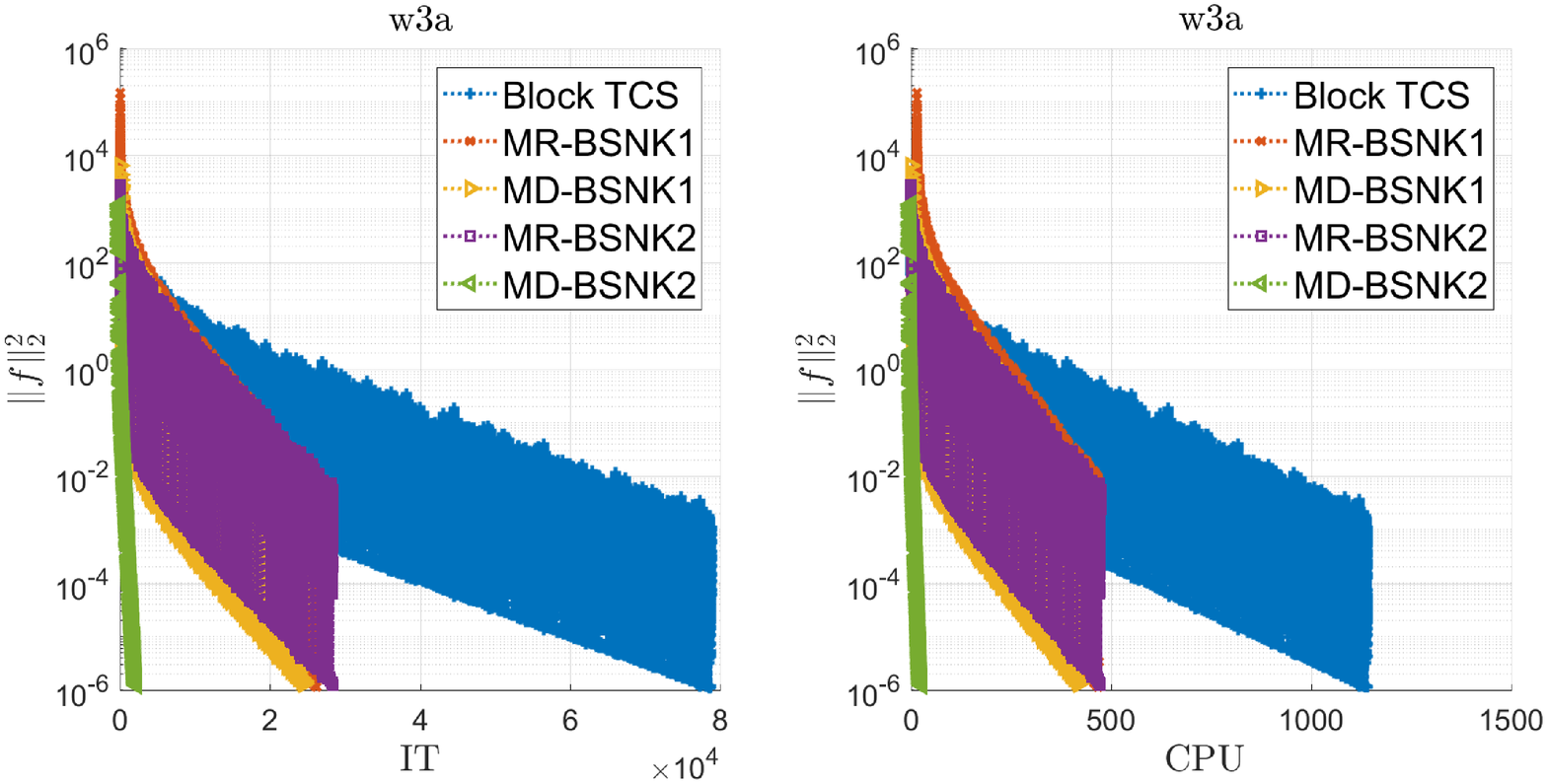}
\includegraphics [height=3.5cm,width=7cm ]{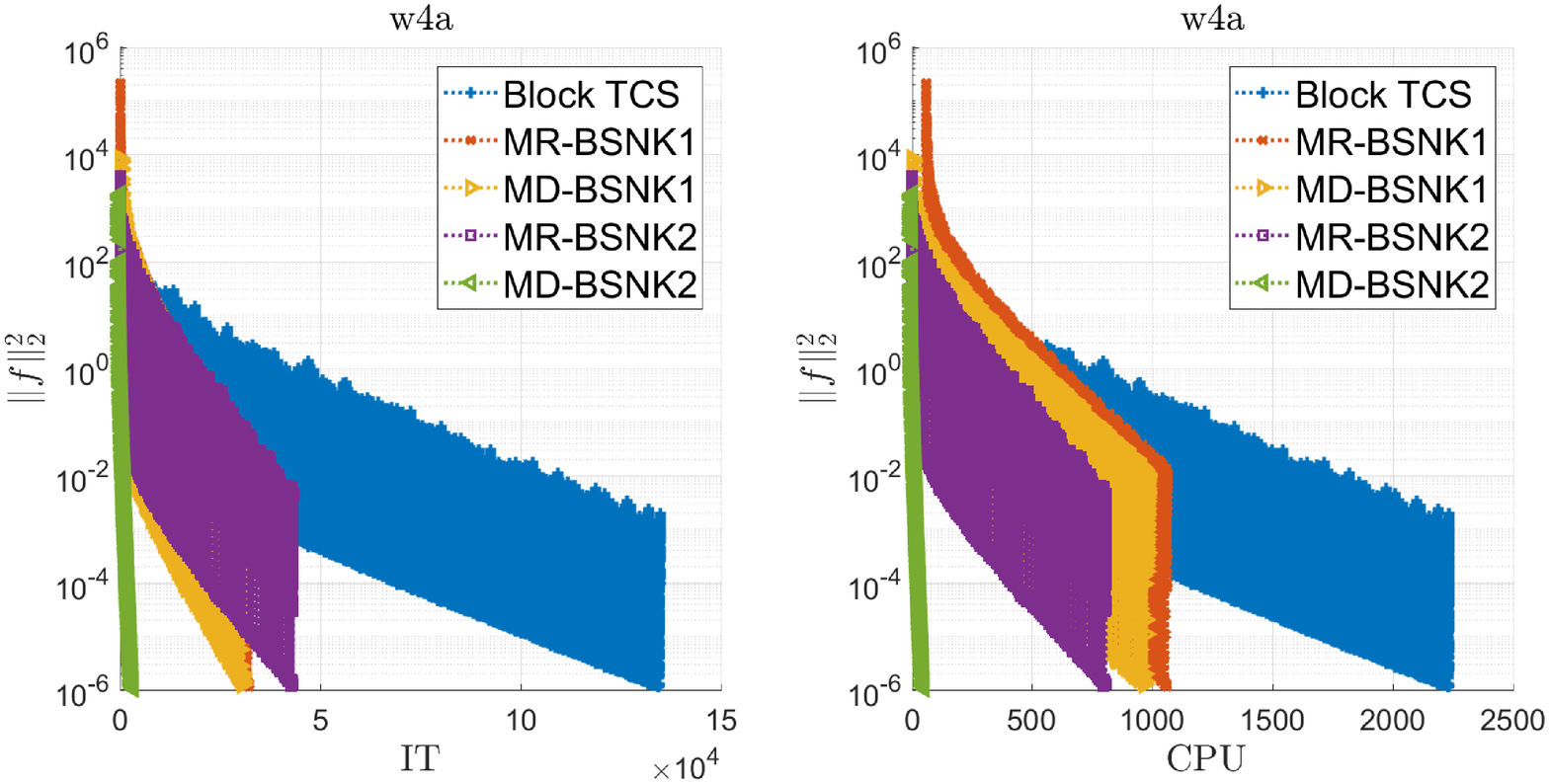}
 \end{center}
\caption{$\|f\|_2^2$ versus IT and CPU for the Block TCS, MR-BSNK1, MD-BSNK1, MR-BSNK2 and MD-BSNK2 methods with $\beta=35$ and $\nu=\tau_p=150$.}\label{fig_multi_samples}
\end{figure}

\Cref{fig_single_sample_IT,fig_single_sample_CPU,fig_multi_samples} show that our methods outperform the corresponding SNR method in terms of computing time and iteration numbers for all datasets, whether it is in the form of single-sample iteration or multi-sample iteration.

Overall, these results confirm that our methods, i.e., the MR-SNK, MD-SNK, MR-BSNK1, MD-BSNK1, MR-BSNK2 and MD-BSNK2 methods are efficient for a wide variety of problems and datasets. This fact also further reflects the superiority and developability of greedy randomized sampling.

\section{Concluding remarks}
\label{sec:conclusions}

This paper mainly proposes six greedy randomized sampling nonlinear Kaczmarz methods for solving nonlinear problems. In theory, we analyze these methods in detail and rigorously compare the size of their convergence factors. 
Theoretical results show that the convergence factors of the MR-SNK and MD-SNK methods are smaller than those of the NRK and NURK methods. More interesting, for the NURK method, we obtain tighter upper bounds. Numerical results also show that our new methods are competitive. The above findings imply that these greedy strategies open up new venues for nonlinear problems.

Considering the superiority of greed, other greedy strategies can be further explored. In addition, similar to \cite{gower2015randomized,gower2021adaptive}, by introducing an matrix $B$, more general updating formula can be deduced from the following relation:
\begin{align}
x_{k+1}=\mathop{\text{argmin}}\limits_{x\in \mathbb{R}^{n}}\|x-x_k\|_B^2,\quad s.t. \quad f_i(x_k)+\nabla f_i(x_k)^T(x-x_k)=0.\notag
\end{align}
So, we can further extend the greedy strategies discussed in this paper to the above iteration formula to obtain more efficient algorithms.



\bibliography{mybibfile}

\end{document}